\documentclass[11pt, reqno]{amsart}

\usepackage{setup}

\newcommand{\cX}{\mathcal{X}}
\newcommand{\cC}{\mathcal{C}}
\newcommand{\cG}{\mathcal{G}}
\newcommand{\cH}{\mathcal{H}}
\newcommand{\cI}{\mathcal{I}}

\newcommand{\cP}{\mathcal{P}}
\newcommand{\cS}{\mathcal{S}}
\newcommand{\cT}{\mathcal{T}}

\newcommand{\bN}{\mathbb{N}}

\newcommand{\bZ}{\mathbb{Z}}
\newcommand{\bR}{\mathbb{R}}
\newcommand{\bQ}{\mathbb{Q}}

\DeclareMathOperator{\E}{E}
\DeclareMathOperator{\ex}{ex}
\DeclareMathOperator{\vect}{vec}
\DeclareMathOperator{\Ima}{Im}

\usepackage[mathlines]{lineno}
\usepackage{etoolbox} 

\newcommand*\linenomathpatch[1]{%
	\expandafter\pretocmd\csname #1\endcsname {\linenomath}{}{}%
	\expandafter\pretocmd\csname #1*\endcsname{\linenomath}{}{}%
	\expandafter\apptocmd\csname end#1\endcsname {\endlinenomath}{}{}%
	\expandafter\apptocmd\csname end#1*\endcsname{\endlinenomath}{}{}%
}
\newcommand*\linenomathpatchAMS[1]{%
	\expandafter\pretocmd\csname #1\endcsname {\linenomathAMS}{}{}%
	\expandafter\pretocmd\csname #1*\endcsname{\linenomathAMS}{}{}%
	\expandafter\apptocmd\csname end#1\endcsname {\endlinenomath}{}{}%
	\expandafter\apptocmd\csname end#1*\endcsname{\endlinenomath}{}{}%
}

\expandafter\ifx\linenomath\linenomathWithnumbers
\let\linenomathAMS\linenomathWithnumbers
\patchcmd\linenomathAMS{\advance\postdisplaypenalty\linenopenalty}{}{}{}
\else
\let\linenomathAMS\linenomathNonumbers
\fi

\linenomathpatchAMS{gather}
\linenomathpatchAMS{multline}
\linenomathpatchAMS{align}
\linenomathpatchAMS{alignat}
\linenomathpatchAMS{flalign}
\linenomathpatch{equation}
\linenomathpatch{numcases}
\linenomathpatch{figure}

\title[Counting sets free of additive
configurations]{Counting subsets of integers free of arithmetic
configurations}

\author[P. Morris]{Patrick Morris}
        \email{pmorrismaths@gmail.com}
\author[M. Ortega]{Miquel Ortega}
\email{miquel.ortega.sanchez-colomer@upc.edu}
\author[J. Rué]{Juanjo Rué}
\address[Morris, Ortega, Rué] {Departament de Matemàtiques and IMTECH, Universitat Politècnica de Catalunya (UPC), Barcelona, Spain}
\address[Rué]{Centre de Recerca
        Matemàtica (CRM), Barcelona, Spain}
        \email{juan.jose.rue@upc.edu}

\thanks{
Researchers of this work are funded through the research projects:
PID2023-147202NB-I00 (COCOA) (MO, JR) and PCI2024-155080-2 (SRC-ExCo) (PM, MO, JR),
 MSCA-RISE-2020-101007705 {\euflag} (RandNet) (MO, JR), by the Spanish State Research Agency, through the Severo Ochoa and Maria de Maeztu Program for Centers and Units of Excellence in R\&D (CEX2020-001084-M) (JR), by the European Union's Horizon Europe Marie Sk{\l}odowska-Curie grant RAND-COMB-DESIGN - project number 101106032 {\euflag} (PM), by a Ram\'on y Cajal fellowship RYC2024-049272-I (PM) and by the FPI grant PRE2021-099120 (MO)}

\begin{document}

\begin{abstract}
        Cameron and Erd\H{o}s \cite{1990.cameron.erdos} asked if the number of
        sets free of arithmetic progressions of length $k$ is
        $2^{r_k(n)(1+o(1))}$, where $r_k(n)$ is
        the maximum cardinality of a $k$-AP-free subset of $\{1, \dots, n\}$.
        Balogh, Liu and Sharifzadeh \cite{2017.bls} made significant progress on
        this question showing that it is $2^{O(r_k(n))}$ for an infinite
        sequence of $n$. We improve their result in two ways. On the one hand,
        we prove that, for $k\geq 5$, the number of $k$-AP-free sets in $[n]$ is
        $2^{r_k(n)(1+o(1))}$ for an infinite sequence of $n$,
        solving the question of Cameron and Erd\H{o}s for infinitely many
        values. On the other hand, we also prove that for $k \geq 3$ and all $n$ the number of $k$-AP-free
        sets in $[n]$ is $2^{O(r_k(n))}$.

        These results are in fact special cases of a general framework that we develop to count families of sets excluding certain arithmetic
        patterns, which applies as long as the corresponding extremal threshold satisfies certain
        Behrend-type lower bounds. As further examples, we get analogous results for solution sets to almost all systems of linear equations as well as counting versions of the multidimensional Szemerédi
        theorem.

\end{abstract}
\maketitle

\section{Introduction}
A fundamental question in extremal combinatorics consists of counting objects
that avoid a certain configuration.
In the arithmetic setting, Cameron and Erd\H{o}s \cite{1990.cameron.erdos} laid
out several questions and conjectures on counting subsets of $[n] = \left\{ 1,
\dots, n \right\}$ avoiding solutions to certain systems of equations, such as
being sum-free, or not containing an arithmetic progression of a given length.
These questions are tightly related to the foundational extremal problem  of
determining the maximum size of a set satisfying such constraints. Indeed, given
a family of subsets $\cX \subset 2^{\bZ}$, letting
\[
        r_{\cX}(n) = \max \left\{ |A| \colon A \subset [n],
        \, 2^A \cap \cX = \varnothing \right\}
\]
be the maximum size of an $\cX$-free set in $[n]$, it holds that
\begin{equation}
        \label{eq:xfree_naive_bounds}
        2^{r_{\cX}(n)} \leq |\left\{A \subset [n] \colon 2^A \cap \cX =
        \varnothing\right\}| \leq \sum_{i = 0}^{r_{\cX}(n)} \binom{n}{i} \leq
        2^{r_{\cX}(n) \log(n) (1+o(1))}.
\end{equation}

Determining which bound is closer to the truth usually requires a
deeper understanding of the behaviour of $\cX$-free sets, since it somehow
measures how much these are clustered together. In this work, we prove two general theorems that allow us to further
the understanding of this problem for a large class of arithmetic configurations that satisfy
certain lower bounds on $r_{\cX}(n)$. This unifies and strengthens several previous results in the area. 
Our model problem will be that of sets free of
arithmetic progressions of length $k$, or $k$-AP-free sets for short.

\subsection{Counting sets without arithmetic progressions}
Sets free of arithmetic progressions have been one of the main areas of study in
additive combinatorics. In particular, one of the cornerstones of the field is
the study of $r_k(n)$, the size of the largest $k$-AP-free set in $[n]$, with
particular focus on the case $k=3$. For example, Szemerédi's \cite{1975.szemeredi}
celebrated theorem states that $r_k(n) = o(n)$.
For general $k$, the best known bounds are
\begin{equation}
        \label{eq:rankin_bound}
        n \exp\left(-C_k \log(n)^{1/(\lceil \log_2 k \rceil)}\right) \lesssim r_k(n) \lesssim n
        \exp\left(-(\log\log n)^{c_{k}}\right),
\end{equation}
for constants $c_k, C_k > 0$. The best lower
bound, up to lower order terms, is still the one achieved by Rankin's
\cite{1960.rankin} generalization of Behrend's \cite{1946.behrend} construction
for $k=3$ (see \cite{2011.obryant} for the best known constant), and the upper
bound has been proved by Leng, Sah, and Sawhney \cite{2024.lss}.
The case $k=3$ has received particular interest, and the current records are
held by Elsholtz, Hunter, Proske, and Sauermann \cite{2024.ehps} for the lower
bound, and Raghavan \cite{2026.raghavan} for the upper bound, after the
breakthrough of Kelley and Meka \cite{2024.kelley.meka}. 

In any case, the precise asymptotics for $r_k(n)$ are
still far from being established, with a particularly large gap in the case $k >
3$. This is what made the following question of Cameron and Erd\H{o}s
\cite{1990.cameron.erdos} on the number of $k$-AP-free sets seem somewhat
elusive.
\begin{question}
        \label{question:cameron_erdos}
        How does the number of $k$-AP-free subsets of $[n]$ compare with
        $2^{r_k(n)}$? In particular, is it $2^{r_k(n)(1+o(1))}$?
\end{question}

However, a surprising result of Balogh, Liu and Sharifzadeh
\cite{2017.bls} showed that a deep understanding of the behaviour of $r_k(n)$ is
not crucial to establish such bounds.
In the following statement, as in later ones, implicit constants are allowed to
depend on the excluded configuration, in this case on $k$.
\begin{theorem}[Theorem 1.2 \cite{2017.bls}]
        \label{theo:previous_theorem}
        For $k \geq 3$, the number of $k$-AP-free subsets of $[n]$ is at most $2^{O(r_k(n))}$
        for infinitely many values of $n$.
\end{theorem}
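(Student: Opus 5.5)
The plan is to follow the hypergraph container method together with a supersaturation estimate, and to use the freedom in ``infinitely many $n$'' to pick scales where $r_k$ behaves regularly. Let $\cH_n$ be the $k$-uniform hypergraph on $[n]$ whose edges are the $k$-APs; it has $\Theta(n^2)$ edges. The $k$-AP-free sets are exactly the independent sets of $\cH_n$. The container lemma, applied iteratively, then produces a family $\cC$ of containers with the following properties: every $k$-AP-free set lies in some $C\in\cC$, each $C$ contains few $k$-APs, and $\log|\cC| $ is small. The count is then bounded by $|\cC|\cdot \max_C 2^{|C|}$, so it suffices to arrange $|C| = O(r_k(n))$ and $\log|\cC| = O(r_k(n))$.

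The key steps, in order, are as follows.

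\textbf{(1) Supersaturation.} We show that any $A\subset[n]$ with $|A|\ge K r_k(n)$ contains many $k$-APs. The argument is a Varnavides-type averaging. Cover $[n]$ by APs of length $m$ (all starting points and differences with $md\le n$), giving about $n^2/m^2$ such $m$-APs. Each $m$-AP meeting $A$ in more than $r_k(m)$ points contains a $k$-AP of $A$. If $|A|/n \ge 2r_k(m)/m$, a positive fraction of these $m$-APs are dense enough. Each $k$-AP is counted in at most $O(m^2)$ of the $m$-APs, so $A$ has $\Omega(n^2/m^4)$ $k$-APs, and a finer count gives more.

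\textbf{(2) Choice of $n$.} The comparison needs $r_k(m)/m \lesssim r_k(n)/n$ for a suitable intermediate scale $m$. The density $r_k(n)/n$ is essentially decreasing in $n$, by subadditivity $r_k(ab)\le a\,r_k(b)$. Since $r_k(n)/n\to0$ at most polynomially slowly in $\log$ scale, one can pick infinitely many $n$ at which the density is not much smaller than at scales $m=n^{c}$. This uses the lower bound \eqref{eq:rankin_bound} only loosely, together with a pigeonhole over dyadic/polynomial scales. This selection of good $n$ is exactly why the statement is restricted to infinitely many values.

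\textbf{(3) Containers.} We apply the balanced-supersaturation version of the container lemma. The aim is to obtain containers of size $O(r_k(n))$ with $\log|\cC| \lesssim \tau n\log(1/\tau)$, where $\tau$ is set by the codegree conditions. The $k$-AP hypergraph has small codegrees, since two points determine $O(k^2)$ APs. With $\Omega(|A|^{?}\cdot)$ many APs, we take $\tau \approx n^{-\varepsilon}$ times the density. This gives $\log|\cC| = o(r_k(n))$ provided the edge count for sets of size $K r_k(n)$ is large enough relative to $|A|$.

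The main obstacle is step (1) in its quantitative form. The container lemma needs sets of size $Kr_k(n)$ to have roughly $|A|^{2}\cdot (|A|/n)^{k-2}$-proportional, well-distributed $k$-APs (balanced degrees), not just $\Omega(n^2/m^4)$. I would first try to obtain balance by restricting to $m$-APs with differences in a structured range and applying the container lemma iteratively, shrinking the container density by a constant factor at each round. I would handle the regime where the density approaches $r_k(m)/m$ using the scale choice from step (2).
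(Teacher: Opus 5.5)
You have the right architecture (containers, a supersaturation input, and a careful choice of $n$), but the two steps that carry the content are not supplied, and the parameters you propose cannot be made to work.

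\textbf{Step (2).} Comparability of densities at polynomial scales $m=n^{c}$ is not available. Nothing known rules out $r_k(n)=n\exp(-(\log n)^{\beta})$ for a small fixed $\beta>0$. In that case $\frac{r_k(m)/m}{r_k(n)/n}=\exp\bigl((1-c^{\beta})(\log n)^{\beta}\bigr)$, which exceeds any constant for all large $n$. A pigeonhole over scales, as in Lemma \ref{lemma:inf_seq}, only yields comparability for infinitely many $n$ when $\log(n/m)=O\bigl((\log n)^{1-\alpha}\bigr)$, where $n\exp(-O((\log n)^{\alpha}))$ is the available lower bound. So the Behrend/Rankin exponent is used essentially, not loosely.

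Note also that $r_k(ab)\le a\,r_k(b)$ points the wrong way. It bounds the density at the larger scale by the density at the smaller one. Supersaturation needs the reverse: the density at the smaller scale must not be much larger than at $n$. That comes either from such a pigeonhole or from supermultiplicativity $r_k(a)r_k(b)\lesssim r_k(ab)$ (Lemma \ref{lemma:lower_multiplicative}).

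The same issue defeats step (3) as written. A single round with $\tau\approx n^{-\varepsilon}$ needs the number of well-spread $k$-APs to exceed $|A|$ by a polynomial factor. With sampling arguments of this kind, that forces a polynomial gap between $m$ and $n$.

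\textbf{Step (1).} At the scales where comparability holds, $m=n^{1-o(1)}$, so the Varnavides bound $\Omega(n^2/m^4)$ is below $1$. Moreover, neither a raw count nor a random-like count $|A|^2(|A|/n)^{k-2}$ is what containers consume. The missing idea is a balanced supersaturation statement, which the paper phrases as the non-existence of cheap covers. The argument runs as follows.
\begin{enumerate}
\item Suppose $\cG$, with edges of size between $2$ and $k$, covers the $k$-APs in $B$ and $w_q(\cG)\le q|B|$. Then $|\cG|\le q^{1-k}|B|$.
\item Intersect $B$ with a random grid $\mathbf{b}+\mathbf{r}\cdot[\ell]$, using a prime difference, or a random dilation in $\bZ/p\bZ$ after a Freiman embedding.
\item A point lands in the grid with probability about $\ell/n$. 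A pair lands with probability smaller by a further positive power of $\ell/n$, up to logarithms.
\item So once $q^{k-1}$ beats that power, delete one vertex from each surviving edge of $\cG$. As long as $|B|/n$ exceeds $r_k(\ell)/\ell$ by a suitable constant factor, more than $r_k(\ell)$ points remain, which produces an uncovered $k$-AP.
\end{enumerate}
This is the mechanism of Propositions \ref{prop:supersaturation_seq} and \ref{prop:supersaturation_alln}. Taking $\log(n/\ell)\asymp\sqrt{\log n}$ together with a constant-factor version of Lemma \ref{lemma:inf_seq}, one round of containers would already give the statement for infinitely many $n$.

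The paper instead derives the statement from the all-$n$ Theorem \ref{theo:kaps_alln}. There $n/\ell=\lambda^{4k}$ and $q=\lambda^{-3}$. The density loss from $n$ to $\ell$ is only $e^{O((\log\lambda)^{\alpha})}\ll\lambda$, by supermultiplicativity and the lower bound at scale $\lambda^{4k}$. Containers are then iterated over dyadic levels $\lambda_i$, so the costs $r_k(n)/\lambda_i$ sum to $O(r_k(n))$ (Theorem \ref{theo:alln_containers}). Your closing remark about iteration points in this direction. However, without the sampling argument and a workable choice of scales, the proposal does not yet amount to a proof.
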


We strengthen this result in several directions. Our first main result confirms
the question of Cameron and Erd\H{o}s for $k\geq 5$ and infinite values of $n$.
\begin{theorem}
        \label{theo:kaps_right_constant}
        For $k \geq 5$, the number of $k$-AP-free subsets in $[n]$ is 
        $2^{r_k(n)(1+o(1))}$ for infinitely many values of $n$.
\end{theorem}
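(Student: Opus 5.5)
The plan is to combine a container theorem with a supersaturation estimate, and to use the Behrend--Rankin lower bound in \eqref{eq:rankin_bound} to make the error terms negligible. The lower bound $2^{r_k(n)}$ is trivial by \eqref{eq:xfree_naive_bounds}, so everything is in the upper bound. The first ingredient is a hypergraph container lemma for the $k$-uniform hypergraph $\cH_n$ on $[n]$ whose edges are the $k$-APs. It gives a family $\cC$ of containers with two properties: every $k$-AP-free set lies in some $C\in\cC$, and each $C$ spans few $k$-APs, say at most $\varepsilon\, n^2$ of them. Moreover $\log_2|\cC| \lesssim n^{1-1/(k-1)}\,\mathrm{polylog}(n)$, or at any rate $n\tau\log n$ for a codegree parameter $\tau = n^{-c}$.

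For $k\ge 5$, Rankin's bound gives $r_k(n) \ge n\exp(-C_k(\log n)^{1/3})$. This is $n^{1-o(1)}$, so $\log_2|\cC| = o(r_k(n))$, and the count is at most $|\cC|\cdot 2^{\max_C |C|}$. (I suspect $k\ge5$ is needed exactly because $\lceil\log_2 k\rceil\ge3$ makes the container count beat $r_k$ comfortably with several iterations.) The remaining task is to show $\max_{C\in\cC}|C| \le r_k(n)(1+o(1))$.

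Unfortunately, a set with few $k$-APs can be much larger than $r_k(n)$: removal-type arguments only give $|C|\le r_k(n)+o(n)$, and $o(n)$ dominates $r_k(n)$. To get around this I would use the freedom to choose $n$, which is why the statement is only for infinitely many $n$. The idea is a multiscale, or ``blow-up'', supersaturation. Choose $n$ so that $r_k$ is roughly multiplicative in the sense that $r_k(m)/m$ does not drop much between $m$ and $n$. A Varnavides-type averaging then says that a set $C\subset[n]$ of size $(1+\delta)r_k(n)$ contains, in a $(1+\delta/2)$ fraction of APs $P$ of length $m$ (as subsets), more than $r_k(m)$ elements. This follows from the averaging $\mathbb{E}_P|C\cap P| = |C|m/n$ if $r_k(m)/m \le (1+\delta/2) r_k(n)/n$. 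Each such $P$ produces a $k$-AP, so $C$ has at least $\gtrsim n^2/m^2$ many $k$-APs, with $m$ chosen so that $r_k(m)\approx$ small. Sequences $n$ where $r_k(m)/m\approx r_k(n)/n$ for a suitable $m = n^{o(1)}$-scale exist for infinitely many $n$ by a pigeonhole argument on the monotone, slowly decaying sequence $r_k(N)/N$. Since $r_k(N)/N \ge \exp(-C(\log N)^{1/3})$, the ratio cannot drop by a factor $1+\delta$ across too many dyadic blocks of scales.

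Finally, I would iterate the containers. The supersaturation gives about $n^2/m^2$ APs for sets of size $(1+\delta)r_k(n)$. I would run the container algorithm repeatedly, or use the ``balanced supersaturation'' version, until each container either has size at most $(1+\delta)r_k(n)$ or contains at most $n^2/m^2$ APs; the latter forces the former. The total container count stays $2^{O(n\,\mathrm{polylog}/m^{c})}$. This is $o(r_k(n))$ provided $m$ grows faster than $\exp(C'(\log n)^{1/3})$, which is compatible with the pigeonhole choice when the Rankin exponent $1/\lceil\log_2k\rceil \le 1/3$. The main obstacle is this balanced supersaturation step. Beyond the number of APs, I need the codegree control that containers require (no pair in too many APs inside $C$), obtained by sampling the APs from a random sub-progression of length $m$. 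Making the exponents match is where $k\ge5$ should enter; I would first try taking $m = \exp((\log n)^{1/2})$.
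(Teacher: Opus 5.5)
Your overall architecture matches the paper's: containers, plus a supersaturation statement that holds only under a smoothness condition on $r_k(N)/N$, secured for infinitely many $n$ by pigeonhole against Rankin's bound. However, the scales in your Varnavides step are set up in a way that breaks the argument. There the sub-progressions have length $m=n^{o(1)}$ (you suggest $m=\exp(\sqrt{\log n})$), so you need $r_k(m)/m\le(1+\delta/2)\,r_k(n)/n$. Pigeonhole cannot give this. The total multiplicative drop of $r_k(N)/N$ between $\sqrt N$ and $N$ is at most $\exp(C(\log N)^{1/3})$, spread over $\asymp\log N$ dyadic blocks. So you can only guarantee a drop of at most $1+\delta$ between $n$ and $n/L$ when $\log L\ll\delta(\log n)^{2/3}$, whereas $[m,n]$ spans $\asymp\log n$ blocks. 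In fact the inequality is false for any $r_k(N)=N\exp(-c(\log N)^{\beta})$ with $\beta>0$. Then $r_k(m)/m=\exp(-c(\log n)^{\beta/2})$ is vastly larger than $r_k(n)/n$, so $\mathbb{E}_P|C\cap P|\approx(1+\delta)r_k(n)m/n$ lies far below $r_k(m)$ and the averaging produces nothing. Your final paragraph's bookkeeping (container count $n\,\mathrm{polylog}/m^{c}$, the need for $\log m\gg(\log n)^{1/3}$, compatibility with pigeonhole) is right only if $m$ denotes the \emph{ratio} $n/\ell$ of the two scales. That is exactly the paper's choice $\ell=ne^{-\sqrt{\log n}}$ in Lemma~\ref{lemma:inf_seq}, but it contradicts both the role of $m$ as a sub-progression length and your claimed $n^2/m^2$ many $k$-APs.

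With the scales corrected (sub-progressions of length $\ell$ close to $n$), averaging only forces roughly $r_k(n)e^{O(\sqrt{\log n})}$ many $k$-APs, barely more than $|C|$. So everything rests on the step you left open. The difficulty is not pair-codegrees of the $k$-AP hypergraph, since a pair lies in at most $\binom{k}{2}$ of them. It is ruling out a cheap cover $\cG$ with $w_q(\cG)\le q|C|$ for $q\approx e^{-c\sqrt{\log n}}$.

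The paper's key idea (Proposition~\ref{prop:supersaturation_seq}) is to intersect $B$ with $\mathbf b+\mathbf p\cdot[\ell]$. Here $\mathbf p$ is a uniformly random \emph{prime} in $[\sqrt{n/\ell}]$, $\mathbf b$ is uniform in $[1,n-t]$, and $t=\sqrt{\ell n}$. Every point of $[t+1,n-t]$ is then hit with probability at least $\ell/n$. A pair $x\neq y$ is hit with probability $\lesssim\log^2 n\,(\ell/n)^{3/2}$, since $\mathbf p$ must be one of the at most $\log n$ prime divisors of $x-y$. So the random progression contains few members of $\cG$, and deleting one point from each leaves more than $r_k(\ell)$ points, hence an uncovered $k$-AP. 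Points within distance $t$ of the ends, where your identity $\mathbb{E}_P|C\cap P|=|C|m/n$ fails, are treated separately, since that region has size $o(n)$.

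This yields ``no cover of weight at most $q|B|$'' for $q=\exp(-\sqrt{\log n}/4k)$. A single application of the Campos--Samotij lemma, with no iteration, then gives containers of size at most $(1+8/\log\log n)r_k(n)$ and $\log|\cC|\lesssim qn\log n=o(r_k(n))$. The admissible window $(\log n)^{1/3}\ll\log(n/\ell)\ll(\log n)^{2/3}$ (container count versus smoothness) is nonempty because the Rankin exponent is below $1/2$. That is exactly where $k\ge5$ enters.
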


We believe that this is a significant step towards understanding Question
\ref{question:cameron_erdos}. Indeed, there are several fundamental examples known where the lower bound in \eqref{eq:xfree_naive_bounds} is in fact not the truth. Most prominently, it was shown by Saxton
and Thomason \cite{2015.saxton.thomason} that the number of Sidon sets (sets
avoiding non-trivial solutions to $a+b=c+d$) in $[n]$ is between $2^{(1.16 +
o(1))\sqrt{n}}$ and $2^{(55+o(1))\sqrt{n}}$ (a finer argument by Kohayakawa,
Lee, R\"odl and Samotij \cite{2015.klrs} gives a better
constant of $(\log_2(32e) +o(1))$ in the exponent), while the maximum size of a Sidon set
is known to be $(1+o(1))\sqrt{n}$. Similarly, in the setting of graphs Morris
and Saxton \cite{2016.morris.saxton} showed that the number of $C_6$-free graphs
on $n$ vertices is at least $2^{1.0007\ex(n,C_6)}$ for infinitely many values of
$n \in \bN$ and at most $2^{O(\ex(n,C_6))}$ for all values of $n$, where
$\ex(n,C_6)$ denotes the largest number of edges in an $n$-vertex $C_6$-free
graph. 
Given these results, it was not necessarily expected that
$(1+o(1))$ should be the correct term in the exponent for Question 
\ref{question:cameron_erdos}. Theorem \ref{theo:kaps_right_constant} gives strong evidence that it is. This mirrors several further examples where  $2^{r_{\cX}(n)(1+o(1))}$ \textit{is} known to be the correct count for the number of $\cX$-free sets (for all $n$). 
 For example, Calkin \cite{1990.calkin} showed that this is the case for
 sum-free sets, Kleitman \cite{1969.kleitman} for antichains, and Balogh, Das,
 Delcourt, Liu, and Sharifzadeh \cite{2019.bdlst} for intersecting families.

 Theorem \ref{theo:kaps_right_constant} is still lacking in the
sense that we only obtain the bound for some values of $n$. 
In this direction, our second main result extends Theorem
\ref{theo:previous_theorem} by proving a bound of the same form for all values
of $n$.
\begin{theorem}
        \label{theo:kaps_alln}
        For $k \geq 3$, the number of $k$-AP-free subsets of $[n]$ is at most
        $2^{O(r_k(n))}$.
\end{theorem}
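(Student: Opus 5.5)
We follow the hypergraph container strategy of Balogh, Liu and Sharifzadeh \cite{2017.bls}. Their argument only yields Theorem~\ref{theo:previous_theorem} for infinitely many $n$ because it needs a supersaturation step whose bound is good only at scales where $r_k$ is ``locally regular''. Concretely, consider the $k$-uniform hypergraph $\cH_n$ on $[n]$ whose edges are the $k$-APs. The containers method gives a family $\cC$ of containers with the following properties: every $k$-AP-free set lies in some $C\in\cC$, each $C$ has $|C|\le K r_k(n)$, and $\log_2|\cC| = O(r_k(n))$. Once we have this, the count is at most $|\cC|\,2^{K r_k(n)} = 2^{O(r_k(n))}$.

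To build the containers we iterate the container lemma. A set $C$ with $|C| > K r_k(n)$ must contain many $k$-APs with controlled codegrees; this is the supersaturation input. The standard route is an averaging (Varnavides-type) argument over sub-progressions of length $m$ inside $C$. If $|C|\ge K r_k(n)$, a positive proportion of the length-$m$ progressions $P\subset[n]$ satisfy $|C\cap P| > r_k(m)$, so each such $P$ contains a $k$-AP of $C$. This gives roughly $|C|^2/r_k(m)^2 \cdot (\ldots)$ many $k$-APs. The codegree bounds are automatic: two points determine $O(k^2)$ $k$-APs, so $\Delta_2=O(1)$ and $\Delta_\ell=O(1)$ for $\ell\ge2$. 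The container lemma then lets us shrink each container by a constant factor at a fingerprint cost of about $n/(\text{average degree})$ per step. Summing these costs over the geometric sequence of container sizes, from $n$ down to $K r_k(n)$, gives $\log|\cC| \lesssim \sum_j \tau_j |C_j|\log(1/\tau_j)$, where $\tau_j$ is the container parameter at size $|C_j|$.

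The key step, and the main obstacle, is making this sum $O(r_k(n))$ for every $n$. In BLS the issue is that comparing $r_k(m)$ with $r_k(n)$ requires $r_k(n)/n$ to behave like $r_k(m)/m$ at the relevant scales, and this can only be guaranteed along a subsequence. To handle all $n$, we plan to use two facts:
\begin{itemize}
\item[(a)] $r_k(n)/n$ is essentially non-increasing, by subadditivity $r_k(a+b)\le r_k(a)+r_k(b)$.
\item[(b)] The Rankin/Behrend-type lower bound \eqref{eq:rankin_bound} gives $r_k(n)\ge n^{1-o(1)}$.
\end{itemize}
With these, we choose the scale $m=m(|C|)$ adaptively for each container size, namely as the largest $m$ with $r_k(m)/m \ge |C|/(Kn)$. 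By monotonicity this $m$ exists and produces supersaturation of order $|C|\cdot(|C|/r_k(m))^{k-1}$-ish hyperedges, hence $\tau\approx r_k(m)/|C|$ up to factors. The logarithmic losses $\log(1/\tau)$ are then absorbed using (b): since $r_k$ is $n^{1-o(1)}$, every relevant ratio is at most $n^{o(1)}$, and the geometric decay of $|C_j|$ turns the sum into a convergent series dominated by its last term, which is $O(r_k(n))$. If the logarithmic loss refuses to be absorbed, the fallback is to run the container lemma on a ``balanced'' supersaturated subhypergraph that is regular in degree. This removes the $\log$ factor at the cost of the constant $K$, which is harmless for an $O(\cdot)$ statement. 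This gives Theorem~\ref{theo:kaps_alln}.
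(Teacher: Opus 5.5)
Your architecture (iterated containers over dyadic container sizes, a sampling-based supersaturation step, and a geometric series for $\log|\cC|$) matches the paper's. The gap is the step meant to make it work for \emph{every} $n$, which does not go through.

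Write $\rho(x)=r_k(x)/x$. For a container with $|C|=\lambda r_k(n)$, sampling at scale $m$ needs $\rho(m)\lesssim \lambda\rho(n)$. The container parameter is also not $\tau\approx r_k(m)/|C|\approx m/n$. All codegrees up to $\Delta_k$ are $\Theta(1)$, while your averaging only guarantees average degree about $n/m$. So the condition at $\ell=k$ forces $\tau^{k-1}\gtrsim m/n$ (in the language of Theorem~\ref{theo:cs_containers}, $q^{k-1}\gg m/n$). For the level-$\lambda$ cost $\tau|C|\log(1/\tau)$ to sum to $O(r_k(n))$ you need $\tau\le\lambda^{-1-\varepsilon}$, hence $n/m\ge\lambda^{(k-1)(1+\varepsilon)}$. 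The real requirement is therefore $\rho(n/T)\le(\lambda/K)\rho(n)$ for $T=\lambda^{O(k)}$, \emph{uniformly in $n$}.

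Your facts (a) and (b) do not give this. Fact (a) points the wrong way: it bounds $\rho(n)$ from above by $\rho(m)$. Fact (b) only says $\rho(n)\ge n^{-o(1)}$, which is compatible with $r_k$ being constant on $[n/T,n]$ for some $T=n^{o(1)}\to\infty$. In that situation your adaptive scale is $m\approx Kn/\lambda$. Even with your optimistic $\tau\approx m/n$, each level then costs about $Kr_k(n)\log\lambda$, summing to about $r_k(n)\log^2T$. With the correct $\tau$, the per-level cost grows like $\lambda^{1-1/(k-1)}r_k(n)$ up to logarithms. Your phrase ``every relevant ratio is at most $n^{o(1)}$'' is precisely an $n$-dependent loss. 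The balanced-subhypergraph fallback addresses the maximum-degree condition, not this cross-scale comparison; the $\log(1/\tau)$ factor is harmless once $\tau\le\lambda^{-1-\varepsilon}$.

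The missing ingredient is supermultiplicativity, $r_k(a)r_k(b)\lesssim r_k(ab)$ (Lemma~\ref{lemma:lower_multiplicative}). A product of two extremal sets in $\bZ^2$ is $k$-AP-free, and the Freiman isomorphism of Corollary~\ref{cor:freiman_product} flattens it into an interval. This gives $\rho(n)\gtrsim\rho(m)\rho(n/m)$. Applying the Behrend bound at the \emph{small} scale $n/m\lesssim\lambda^{4k}$, rather than at $n$, yields $\rho(m)/\rho(n)\lesssim e^{O((\log\lambda)^{\alpha})}=\lambda^{o(1)}$ with $\alpha<1$, independently of $n$. This is what lets the paper fix the scale $\ell=p/\lambda^{4k}$ and $q=\lambda^{-3}$ in Proposition~\ref{prop:supersaturation_alln}, so that level $\lambda$ costs $O(r_k(n)\log\lambda/\lambda^2)$.

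Your sketch also skips a second point. Averaging over $m$-APs inside $[n]$ is non-uniform near the ends. Moreover, the number of $m$-APs containing a given pair (or $k$-AP) depends on the divisors of the difference. In the worst case this costs a factor at least logarithmic in $n$, which again makes the required ratio of scales depend on $n$. The paper avoids this by moving to $\bZ/p\bZ$ via Lemma~\ref{lemma:freiman_mod} and sampling $\mathbf{L}=\mathbf{b}+\mathbf{r}\cdot[\ell]$ with a random dilation $\mathbf{r}$. Then two distinct points lie in $\mathbf{L}$ with probability at most $(\ell/p)^2$.
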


The previous best-known bound for all values of $n$ was also given by Balogh,
Liu and Sharifzadeh \cite{2017.bls}, who showed that for any $c>0$ and $h(n)$
such that $h(n)\leq \min\{n/r_k(n),(\log n)^c\}$ the number of $k$-AP-free
subsets of $[n]$ is at most $2^{O(n/h(n))}$. Whilst our bound matches theirs if
one believes that $r_k(n)\geq n/(\log n)^c$ for some $c>0$, our bound gives a
$2^{O(r_k(n))}$ upper bound no matter what the form of $r_k(n)$ and so parallels
any improvements in bounds on $r_k(n)$. This is the case, for example, with
$r_3(n)$, the upper bound of which since \cite{2017.bls} has been improved to
$r_3(n)\leq n g(n)^{-1}$ where $g(n)\coloneqq\exp(c'(\log n)^{1/6}(\log \log
n)^{-1})$ for some $c'>0$
\cite{2023.bloom.sisask,2024.kelley.meka,2026.raghavan} (the best known bound at
the time of publication of \cite{2017.bls} was due to Bloom \cite{2016.bloom}
and had the form $r_3(n)=O(n(\log \log n)^4/\log n)$). As $g(n)\gg(\log n)^c$
for any $c>0$, the result of Balogh et al.\ \cite{2017.bls} became weaker than
the trivial upper bound in \eqref{eq:xfree_naive_bounds}. Our bound in Theorem
\ref{theo:kaps_alln} recovers a bound that beats the one in
\eqref{eq:xfree_naive_bounds} and shows that the lower bound
\eqref{eq:xfree_naive_bounds} is always the truth up to a constant in the
exponent.

\subsection{Counting sets free of arithmetic configurations}
Both Theorems \ref{theo:kaps_right_constant} and \ref{theo:kaps_alln} are in
fact instances of more general theorems which provide analogous bounds for
solutions to a wide array of sets of equations. We elaborate fully on our
framework in Section \ref{sec:main_thm_stmts} but suffice it to say here that an
analogous result to Theorem \ref{theo:kaps_alln} holds whenever the
corresponding extremal solution-free set satisfies Behrend-type lower bounds,
whilst bounds analogous to those in Theorem \ref{theo:kaps_right_constant} hold
under stronger Rankin-type lower bounds (which hold for $k$-APs when $k\geq 5$
as shown by Rankin \cite{1960.rankin}). Here we discuss two further implications
of our unified framework.

Firstly, using a result of Shapira \cite{2006.shapira}, we show that such bounds
in fact hold for solution sets to \textit{almost all} translation-invariant
systems of linear equations. To make this notion precise, it is useful to
introduce some definitions. As in \cite{2006.shapira}, we call a linear equation
on $k$ unknowns
\[
        a_1 x_1 + \dots + a_k x_k = 0
\]
with $\sum_{i=1}^k a_i = 0$ a \emph{$(k, h)$-equation} if $a_i \in [-h, h]$ for
all $1 \leq i \leq k$. Furthermore, we say a matrix $A \in \bZ^{m \times k}$ is a $(k, h)$-matrix 
if every row corresponds to the coefficients of a $(k, h)$-equation.
We will be concerned with the system of $m$ $(k, h)$-equations defined by $A
\cdot x = 0$, where
$x^T = (x_1, \dots, x_k)$ is the vector of unknowns.
Setting all $x_i$ to be equal yields
a solution to $A \cdot x = 0$. We call solutions of this form
\emph{trivial}\footnote{Our notion of trivial solutions is not
        quite the standard one, but the difference is not relevant for any of
        our applications. See Remark \ref{remark:trivial_sols} for more
details.}, and say a subset $B \subset \bN$ is $A$-free if there is no
non-trivial solution to $A \cdot x = 0$ with $x_1, \dots, x_k \in B$.
Similarly to the previous cases, we define
\[
        r_A(n) = \max \left\{|B| \colon B \subset [n] \text{ $A$-free}\right\}
\]
to be the size of the largest $A$-free subset in $[n]$. Shapira
\cite{2006.shapira} proved a Behrend-type lower bound on $r_A(n)$ for almost
all systems of $(k, h)$-equations with enough equations. This allows us to prove the
following.
\begin{theorem}
        \label{theo:random_systems_infseq}
        Let $h, k, t \geq 3$ be integers satisfying
         $\binom{t+2}{4} \leq k$.
         Then there is a constant $c = c(k)$
        such that, for all but a $c/h$ fraction of the $(k, h)$-matrices $A \in \bZ^{(k-t+1) \times k}$,
        the number of $A$-free sets
        in $[n]$ for infinite values of $n$ is $2^{r_A(n)(1+o_{h, k}(1))}$.
\end{theorem}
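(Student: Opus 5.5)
The proof will be a direct application of the general framework of Section~\ref{sec:main_thm_stmts}, combined with Shapira's lower bound for random systems \cite{2006.shapira}. The abstract framework (the general version of Theorem~\ref{theo:kaps_right_constant}) says roughly the following. Let $\cX$ be a family of translation-invariant configurations, here the non-trivial solution sets of $A\cdot x=0$. Suppose $\cX$ satisfies a supersaturation or containers hypothesis, which holds for any system of finitely many linear equations via a removal-lemma or Varnavides-type counting argument. Suppose also that $r_{\cX}$ obeys a Rankin-type lower bound $r_{\cX}(n)\ge n\exp(-C(\log n)^{\gamma})$ with $\gamma$ small enough, say $\gamma<1/2$, or whatever threshold that framework requires. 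Then the number of $\cX$-free subsets of $[n]$ is $2^{r_{\cX}(n)(1+o(1))}$ for infinitely many $n$.

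So the plan has three steps, carried out in this order.
\begin{enumerate}
\item Check that the systems in question fit the framework. The matrix $A$ has integer entries bounded by $h$ and each row sums to zero, so the system is translation-invariant and (after dilation) homogeneous. Trivial solutions are exactly the constant vectors, so the remark on trivial solutions (Remark~\ref{remark:trivial_sols}) applies. Hence the structural and counting hypotheses of the general theorem hold for every such $A$, with constants depending on $k$ and $h$.
\item Invoke Shapira's theorem. For $\binom{t+2}{4}\le k$, all but a $c(k)/h$ fraction of $(k,h)$-matrices with $k-t+1$ rows satisfy $r_A(n)\ge n\exp(-C(\log n)^{1/t})$. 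This is a Rankin-type bound: the exponent of $\log n$ is $1/t\le 1/3$ because $t\ge3$. It is the analogue of Rankin's exponent $1/\lceil\log_2 k\rceil$, which drops below $1/2$ exactly when $k\ge5$.
\item Apply the general theorem to each good $A$ to conclude $2^{r_A(n)(1+o_{h,k}(1))}$ along an infinite sequence of $n$.
\end{enumerate}

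The main obstacle is checking that Shapira's bound has the precise form the framework needs. I need to confirm three things: that the bound holds for all $n$, or at least for a sufficiently dense set of $n$; that its exponent $1/t$ is strictly below the critical threshold of the framework; and that it applies to $r_A$ as defined here, where only constant solutions count as trivial.

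The last point may require a small argument. Shapira's notion of non-trivial solution could be stricter (for example, requiring distinct variables). If so, I would either pass to a subset avoiding the extra degenerate solutions, which costs only a constant factor, or note that for generic $A$ any solution with a repeated variable reduces to a solution of a smaller system. Such a smaller system has no non-constant solutions for all but an $O(1/h)$ fraction of matrices, and this failure probability can be absorbed into the constant $c(k)$.

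I would also track that the $o(1)$ term depends only on $h$ and $k$. This follows since all constants in the framework depend only on the system $A$, and there are finitely many $A$ for fixed $h,k$.
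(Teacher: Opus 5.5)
Your skeleton (translation-invariance, a Rankin-type lower bound on $r_A(n)$ for generic $A$, then Theorem \ref{theo:inf_seq_bounds} with $d=1$) is the paper's. However, Step 2 is a genuine gap: the bound you invoke is not what Shapira proves, and its exponent is wrong.

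Shapira \cite{2006.shapira} only proves the Behrend-type bound $r_A(n)\ge n\exp(-c\log(n)^{1/2})$. This is the $\ell=1$ case of Proposition \ref{prop:rankin_random_system}, under $\binom{t}{2}\le k$, and it is what feeds Theorem \ref{theo:random_systems_alln}. An exponent of exactly $1/2$ is useless here. Theorem \ref{theo:inf_seq_bounds} needs $\alpha<1/2$ strictly, because Lemma \ref{lemma:inf_seq} only reaches the contradiction $r_A(m)\le m^d\exp(-\log(m)^{1/2}/(3\log\log m))$.

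Your exponent $1/t$ is not available either. Rankin's scheme gives exponent $1/(\ell+1)$ under $\binom{t-2+2^\ell}{2^\ell}\le k$. So the hypothesis $\binom{t+2}{4}\le k$ is precisely the case $\ell=2$ and yields exponent $1/3$, which suffices. Exponent $1/t$ for $t\ge4$ would require a far larger $k$. You never explain where $\binom{t+2}{4}$ comes from, and that is the real content of the proof.

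The missing argument is Proposition \ref{prop:rankin_random_system}, proved in Appendix \ref{app:hypothesis_checking}.

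\begin{itemize}
\item For all but $O_k(1/h)$ of the matrices, Gaussian elimination to diagonalised form gives a parametric representation $z_i=z_1+\sum_{j=1}^{t-2}p_i^j(z_{j+1}-z_1)$ with $p_1,\dots,p_k\in\bQ^{t-2}$.
\item A Schwartz--Zippel argument on the determinant of the evaluation matrix shows these points lie on no nonzero polynomial of total degree at most $4$ in $t-2$ variables. This needs at least $\binom{t+2}{4}$ points, which is the number of such monomials.
\item Integer points on a sphere in $[M]^s$ then contain no nonconstant degree-$2$ polynomial copy of $P$, since the sum of squares would be a degree-$4$ polynomial constant on all $p_i$.
\item Lattice points whose squared norm lies in a translate of such a set contain no nonconstant affine copy of $P$.
\item A Freiman projection to $[n]$ gives density $\exp(-C\log(n)^{1/3})$.
\end{itemize}

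This also settles your trivial-solution worry directly. Every solution has the form $z_i=Q(p_i)$ with $Q$ affine, so excluding nonconstant affine copies excludes exactly the solutions that are not all equal. Your fallback there would not have worked anyway. Merging two variables leaves $k-1$ unknowns and $k-t+1$ equations, so for $t\ge4$ the merged system always has non-constant solutions.
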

We have chosen to make the dependency on the constants explicit in the previous
theorem to remark that $c$ depends only on $k$. Thus when $h\gg k$, we get that all but a negligible fraction of the $(k,h)$-matrices satisfy the conclusion. As before, we also have a
similar result that holds for all values of $n$ with a weaker bound in the
exponent.
\begin{theorem}
        \label{theo:random_systems_alln}
        Let $h, k, t \geq 3$ be integers satisfying
        $\binom{t}{2} \leq k$. Then there is a constant $c = c(k)$
        such that,
        for all but a $c/h$ fraction of the $(k, h)$-matrices $A \in \bZ^{(k-t+1)\times k}$, the
        number of $A$-free sets in $[n]$ for all values of $n$ is $2^{O_{h, k}(r_A(n))}$.
\end{theorem}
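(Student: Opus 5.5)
Theorem \ref{theo:random_systems_alln} will follow by combining the general all-$n$ counting framework of Section \ref{sec:main_thm_stmts} (the result of which Theorem \ref{theo:kaps_alln} is a special case) with Shapira's Behrend-type lower bound \cite{2006.shapira}. That framework yields $2^{O(r_\cX(n))}$ for every $n$ provided two things hold. First, $r_\cX(n)$ satisfies a Behrend-type lower bound, roughly $r_\cX(n)\geq n\exp(-C\sqrt{\log n})$ or some comparably mild quasi-polynomial loss. Second, the configuration supplies the supersaturation and container input the framework needs, namely that $\cX$ comes from a translation- and dilation-invariant system with bounded coefficients, so that its solutions in $[n]$ are counted by a removal or supersaturation lemma. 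The proof therefore has three parts: check the structural hypotheses for $A$-free sets, import Shapira's bound for almost all $A$, and invoke the general theorem.

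The structural hypotheses hold for every $(k,h)$-matrix. Each row has coefficients summing to zero, so if $x$ is a solution then so are $\lambda x+\mu\mathbf 1$ for any $\lambda,\mu$. This means the family $\cX_A$ of non-trivial solution sets is translation- and dilation-invariant. Coefficients bounded by $h$ mean each solution has bounded "size," so only $O_{h,k}(1)$ dilated copies of $[m]$ fit in a natural way. The supersaturation input comes from the arithmetic removal lemma for linear systems (Král'--Serra--Vena / Shapira). It says that a set in $[n]$ of density well above $r_A(m)/m$ at scale $m$ contains $\Omega(n^{k-\operatorname{rank}A})$ solutions. Averaging over progressions gives the quantitative form the framework needs.

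Next we bring in Shapira's result. For $\binom t2\le k$, all but a $c(k)/h$ fraction of $(k,h)$-matrices $A\in\bZ^{(k-t+1)\times k}$ satisfy $r_A(n)\geq n\exp(-C_{h,k}\sqrt{\log n})$, by a Behrend-sphere construction adapted to $A$. Here the condition $\binom t2\le k$ is exactly Shapira's genericity condition, which ensures that a random $A$ has no non-trivial solution with too few distinct values. I will check that Shapira's notion of trivial solution matches ours up to the discrepancy discussed in Remark \ref{remark:trivial_sols}. If needed, I will argue that the solutions differing between the two notions (those with repeated but not all-equal entries) are excluded for these generic $A$, so that Shapira's lower bound transfers to our $r_A$.

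The main obstacle is matching Shapira's lower bound to the precise Behrend-type hypothesis of the general theorem. The general theorem probably needs the bound uniformly at all scales, with explicit constants depending only on $h,k$, and perhaps needs $r_A$ to be roughly monotone or submultiplicative so that it can compare $r_A(m)$ to $r_A(n)$. I would first try to derive the needed regularity directly: $r_A(n)$ is nondecreasing, and $r_A(an)\le a\,r_A(n)+O(1)$ by translation invariance. Combined with Shapira's $n\exp(-C\sqrt{\log n})$ bound, this should verify the framework's hypothesis. The exceptional fraction $c/h$ is then inherited verbatim from Shapira, and the dependence of the implicit constant in $O_{h,k}$ comes only from $C_{h,k}$ and the removal lemma.
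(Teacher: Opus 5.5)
This is essentially the paper's proof. Proposition~\ref{prop:rankin_random_system} with $\ell=1$ (note $\binom{t-2+2}{2}=\binom{t}{2}\le k$) gives $r_A(n)\ge n\exp(-c_2\log(n)^{1/2})$ for all but a $c_1/h$ fraction of $(k,h)$-matrices, already for the paper's all-equal notion of triviality because a $(P,1)$-free set is $A$-free, so no transfer step is needed; and since every $(k,h)$-matrix is translation-invariant, the $d=1$ case of Theorem~\ref{theo:main_alln} with $\alpha=1/2$ finishes the proof.

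Theorem~\ref{theo:main_alln} requires nothing beyond translation invariance and this bound; its supersaturation input, Proposition~\ref{prop:supersaturation_alln}, is proved internally via random grids modulo a prime and the supermultiplicativity of Lemma~\ref{lemma:lower_multiplicative}. So your ``main obstacle'' disappears, and the removal-lemma paragraph should be dropped. The removal lemma only gives supersaturation for sets of size $r_A(n)+\epsilon n$, which says nothing at size $\lambda r_A(n)=o(n)$. Finally, $\binom t2\le k$ is Shapira's non-degeneracy condition for the parametric representation (no nonzero polynomial of total degree at most $2$ in $t-2$ variables vanishes at all $p_i$), not a condition about solutions with few distinct values.
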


As further corollaries to our main theorems, we give improved counting versions
of the multidimensional Szemerédi theorem, which is concerned with patterns that
generalise arithmetic progressions of finite length to higher dimensions. Given
a dimension $d > 0$ and a finite set $X \subset \bZ^d$, and for the purposes of
this paper, a \emph{copy} of $X$ is a set of the form
\[
        \left\{ b + \lambda\cdot x \colon x \in X \right\},
\]
where $b \in \bR^d$ and $\lambda \in \bR$. Furthermore, we
say the copy is \emph{positive} if $\lambda > 0$, \emph{negative} if $\lambda <
0$, and \emph{trivial} if $\lambda = 0$. We say a set in $\bR^d$ is
\emph{$X$-free} if it only contains trivial copies of $X$. The usual
definition for $X$-free sets in previous literature allows for negative copies
of $X$, but we will only consider this more restrictive
notion, as we discuss in more detail in Remark \ref{remark:negative_copies}
below. The extremal threshold for $X$ is
\[
        r_X(n) = \max
\left\{|A| \colon A \subset [n]^d \text{ $X$-free}\right\}.
\]
The multidimensional Szemerédi theorem states that $r_X(n) = o(n^d)$. Counting
versions of this theorem have already been studied. Indeed, Theorem
\ref{theo:previous_theorem} has been generalised first by Kim \cite{2022.kim}
for counting corner-free subsets of grids, and then by Behague, Hyde, Morrison,
Noel and Wright \cite{2025.bhmnw}, who have recently proved the following.
\begin{theorem}
        \label{theo:behague_etal}
        Let $d$ be a positive integer and let $X \subset \bZ^d$ be a finite set
        such that $|X| \geq 3$. For infinitely many $n \in \bN$, the number of
        subsets of $[n]^d$ free of positive copies of $X$ is $2^{O(r_X(n))}$.
\end{theorem}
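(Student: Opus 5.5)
This is the theorem of Behague, Hyde, Morrison, Noel and Wright, quoted from the literature. I would follow the supersaturation plus hypergraph container scheme of Balogh, Liu and Sharifzadeh (Theorem~\ref{theo:previous_theorem}), adapted to the grid.

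\textbf{Step 1: auxiliary hypergraph and supersaturation.} Fix $X$ with $|X| = k \geq 3$. Let $\cH$ be the $k$-uniform hypergraph on $[n]^d$ whose edges are the positive copies of $X$. The main input is a balanced supersaturation lemma. If $B \subset [n]^d$ has $|B| \geq C\, r_X(m)(n/m)^d$ for a suitable scale $m$, then $B$ contains many positive copies of $X$, spread out so that no small set of points lies in too many of them.

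To prove it, I would tile $[n]^d$ by random translates of dilated subgrids of the form $b + \lambda[m]^d$. Each such subgrid is a copy of $[m]^d$, so an $X$-free subset of it has size at most $r_X(m)$. A density increment then says that a $\Omega(1)$ fraction of the sampled subgrids meet $B$ in more than $r_X(m)$ points. Each such subgrid contributes at least $|B \cap \text{grid}| - r_X(m)$ copies of $X$, by deleting one point per copy. Averaging over $b$ and $\lambda$ gives many copies, with codegrees controlled by the number of grids through a given pair of points.

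\textbf{Step 2: containers.} Feed this into the hypergraph container lemma, iterated. Every $X$-free set then lies in one of at most $2^{O(\,\cdot\,)}$ containers, each of size $O(r_X(m)(n/m)^d)$. Summing $2^{|\text{container}|}$ over containers gives a count of the form $2^{O(r_X(m)(n/m)^d + \text{container count exponent})}$.

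\textbf{Step 3: choosing $n$.} To finish we need $r_X(m)(n/m)^d = O(r_X(n))$. This holds for infinitely many $n$, which is why the theorem is only stated for such $n$. The reason is that $r_X(n)/n^d$ is nonincreasing up to constants under blow-up: a product construction gives $r_X(nm) \gtrsim r_X(n)\, r_X(m)$-type inequalities, or at least $r_X(n)/n^d \lesssim r_X(m)/m^d$ for $m \mid n$. Hence infinitely many $n$ are ``good'' scales where the density barely drops between $m$ and $n$. A pigeonhole over dyadic scales, using that $r_X(n)/n^d \to 0$ only slowly in the relevant window, gives infinitely many $n$ with $r_X(n)/n^d \geq c\, r_X(m)/m^d$ for a suitable $m = n^{\epsilon}$. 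The container count exponent must also be $O(r_X(n))$. Since it is roughly $n^d \cdot$(codegree ratio)$\cdot \log n$, this requires $m$ polynomial in $n$, which is compatible with the choice above.

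\textbf{Main obstacle.} I expect the hardest step to be the codegree control in the supersaturation for general $X$ in $\bZ^d$. Two points can lie in many dilated subgrids, so the tiling must be randomized over both $\lambda$ and $b$ to keep pair-codegrees at most a $\lambda$-range factor times the average. I would first handle this by restricting $\lambda$ to a dyadic range $[L, 2L]$, and only then adapt the Balogh--Liu--Sharifzadeh counting argument.
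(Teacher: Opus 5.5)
\textbf{The gap is in Step 3, which is the crux of the argument.} Write $\delta(n)=r_X(n)/n^d$. For a fixed $\epsilon$, no pigeonhole yields infinitely many $n$ with $\delta(n)\ge c\,\delta(n^{\epsilon})$. Suppose it failed for all large $n$. Iterating along $n_0,n_0^{1/\epsilon},n_0^{1/\epsilon^2},\dots$ would then only give $\delta(N)\le(\log N)^{-\log(1/c)/\log(1/\epsilon)}$, because there are just $O(\log\log N)$ polynomial scales below $N$.

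That contradicts nothing. The Behrend-type bound $\delta(N)\ge\exp(-C_B\sqrt{\log N})$ lies far below every inverse power of $\log N$. For $X=\{0,1,2\}$, the Kelley--Meka bound already gives $\delta(N)\le(\log N)^{-A}$ for every $A$, so your would-be contradiction is in fact a theorem. Worse, for a regular profile such as $\delta(n)=\exp(-c_0\sqrt{\log n})$ one has $\delta(n)/\delta(n^{\epsilon})\to 0$. So the inequality you need would fail for all large $n$, and nothing known excludes such behaviour.

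Your reason for insisting on a polynomial scale is also mistaken. The container cost only has to be $O(r_X(n))$, and a codegree ratio of $\exp(-c\sqrt{\log n})$ with $c$ large compared with $kC_B$ already achieves this.

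\textbf{How to repair it.} The paper does not prove this statement; it is quoted from Behague et al. However, the machinery of Section~\ref{sec:inf_seq} uses only translations and positive dilations, so it applies to positive copies (cf.\ Remark~\ref{remark:negative_copies}). It shows how to fix Step 3: compare $n$ with the subpolynomial scale $\ell=n\exp(-C\sqrt{\log n})$.
\begin{enumerate}
\item[(1)] \emph{Smoothness.} Descending from $n$ to $\sqrt n$ at this scale takes $\gtrsim\sqrt{\log n}/C$ steps. A constant-factor drop at every step would therefore force $\delta(n)\le\exp(-c'\sqrt{\log n})$ with $c'$ as large as you like, contradicting the Behrend bound. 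This is Lemma~\ref{lemma:inf_seq} with a constant factor in place of $1-1/\log\log n$. That variant is what is needed for $|X|\in\{3,4\}$, where only $\alpha=1/2$ is available.
\item[(2)] \emph{Price in supersaturation.} The supersaturation may now lose only $\exp(o(\sqrt{\log n}))$ in the number of grids through a pair of points. The generic divisor bound $n^{O(1/\log\log n)}$ for arbitrary dilations is far too weak, and restricting $\lambda$ to a dyadic range does not improve it. Proposition~\ref{prop:supersaturation_seq} resolves this by using prime dilations up to $\sqrt{n/\ell}$ inside a box away from the boundary, and treating the boundary separately. This costs only a $\log n$ factor, since a nonzero coordinate difference has at most $\log n$ prime factors.
\item[(3)] \emph{Containers.} A single application of Theorem~\ref{theo:cs_containers} with $q=\exp(-C\sqrt{\log n}/4k)$ then has fingerprint cost $qn^d\log n=O(r_X(n))$, once $C$ is large compared with $kC_B$. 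Run with constant factors in place of $1\pm\delta$, it also yields containers of size $O(r_X(n))$ along the sequence.
\end{enumerate}
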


We obtain generalisations of Theorem \ref{theo:kaps_right_constant} and Theorem
\ref{theo:kaps_alln} in this setting. On the one hand, we establish bounds
with the correct exponent for sets with $|X| \geq 5$ and infinitely many
values of $n$.
\begin{theorem}
        \label{theo:multi_sz_infseq}
        Let $d$ be a positive integer and let $X \subset \bZ^d$ be a finite set
        such that $|X| \geq 5$. For infinitely many $n \in \bN$, the number of
        $X$-free subsets of $[n]^d$ is $2^{r_X(n)(1+o(1))}$.
\end{theorem}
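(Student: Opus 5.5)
The plan is to derive Theorem \ref{theo:multi_sz_infseq} from the general infinite-sequence counting theorem of Section \ref{sec:main_thm_stmts}, the one that yields Theorem \ref{theo:kaps_right_constant}. That theorem needs three inputs: a supersaturation-type structure (a hypergraph container setup) for the configuration, a Rankin-type lower bound on the extremal threshold, and a way to transfer between ambient sizes. I will verify each for copies of $X$ in $[n]^d$.

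\textbf{Step 1: reduce to one dimension.} First I encode $X$-free sets via a linear system. Write $X=\{x_1,\dots,x_m\}$ with $m\ge 5$. A copy $\{b+\lambda x_i\}$ is the image of a $k$-AP-like object. Choose a generic linear projection $\pi:\bZ^d\to\bZ$ that is injective on $X$, so that $\pi(X)=\{a_1,\dots,a_m\}$ consists of distinct integers. A copy of $X$ in $\bZ^d$ projects to a copy of $\pi(X)$ in $\bZ$. Conversely, a standard lifting (as in Rankin/Behrend constructions for the multidimensional setting) turns a large $\pi(X)$-free set $B\subset[N]$ into an $X$-free subset of $[n]^d$ of size $|B|\cdot n^{d}/N$-ish, by taking $\{y\in[n]^d: \pi(y)\in B + \text{shifts}\}$. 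The copy condition is a translation-invariant system of $m-2$ equations in $m$ unknowns, so Rankin's argument applies to it: since the projected configuration has $m\ge5$ points and contains no nontrivial repeated structure, Rankin's higher-degree sphere method gives
\[
r_X(n)\ \ge\ n^d\exp\left(-C(\log n)^{1/\lceil\log_2 m\rceil}\right),
\]
which is of the form $n^d\exp(-C(\log n)^{\alpha})$ with $\alpha\le 1/3<1/2$. This is the Rankin-type bound required.

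\textbf{Step 2: supersaturation and containers in $[n]^d$.} By the multidimensional Szemerédi theorem, through a Varnavides-type averaging over sub-grids, any $A\subset[n]^d$ with $|A|\ge r_X(n')$ for a suitable $n'\ll n$ contains many copies of $X$. Combined with the standard codegree bounds (two points determine $b,\lambda$), this feeds the container lemma exactly as for $k$-APs. I expect these checks to be routine once the general theorem is stated with abstract hypotheses on the family of configurations.

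\textbf{Step 3 (main obstacle): the infinite sequence of $n$.} The general theorem presumably selects $n$ where $r_X$ is "locally regular", for instance $r_X(n)/n^d$ not much larger than $r_X(n/s)/(n/s)^d$. The key difficulty is that the $1+o(1)$ exponent requires the container count to contribute only $2^{o(r_X(n))}$. This forces the $\exp(-(\log n)^\alpha)$ loss with $\alpha<1/2$, because the container step costs roughly $n^d/\text{(polylog-type gain)}$ times an entropy term. I would first mimic the argument for $k$-APs with $k\ge5$ verbatim, replacing $n$ by $n^d$ as the ground-set size and checking that the pigeonhole choice of good $n$ still works when $r_X$ is only known to be superadditive-ish on grids. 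Establishing that submultiplicativity, that is $r_X(ab)\le r_X(a)\,b^d$ up to constants via tiling $[ab]^d$ by translates of $[a]^d$, is where I expect the care to lie.
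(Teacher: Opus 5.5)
Your skeleton is the paper's. You get a Rankin-type bound $r_X(n)\ge n^d\exp(-C(\log n)^{1/3})$ for $|X|\ge 5$; this is Proposition~\ref{prop:rankin_szemeredi}, proved by exactly your projection-and-pullback argument, with Rankin's induction run on polynomial copies of $\pi(X)$. You then apply Theorem~\ref{theo:inf_seq_bounds}, since $1/3<1/2$.

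The gap is that you misread what Theorem~\ref{theo:inf_seq_bounds} needs, and so you omit the one structural input it does need. That theorem counts $A$-free subsets of $[n]^d$ for a matrix $A\in\bZ^{m\times kd}$ that is translation-invariant in dimension $d$. Its only hypotheses are this and the lower bound on $r_A(n)$. You must therefore produce such an $A$ whose non-trivial solutions (not all $b_i$ equal) are exactly the non-trivial copies of $X$ in $\bZ^d$, so that $r_A=r_X$ and the two counts coincide. You announce this encoding but never carry it out. The only system you write down ($m-2$ equations in $m$ unknowns) describes copies of $\pi(X)\subset\bZ$ and matters only for the lower bound. A coordinatewise version would allow a different dilation in each coordinate.

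The paper's Lemma~\ref{lemma:system_copiesx} supplies the encoding. With $x_1=0$, impose $(b_i^s-b_1^s)x_j^t=(b_j^t-b_1^t)x_i^s$ for all $2\le i,j\le k$ and $s,t\in[d]$. These equations mix coordinates, but their coefficients sum to zero within each component, so $A$ is translation-invariant. Conversely, suppose $b_i^s\neq b_1^s$. Then $x_i^s\neq 0$, since otherwise taking $j=i$ forces $x_i=0=x_1$. So $\lambda=(b_i^s-b_1^s)/x_i^s\neq 0$ satisfies $b_j=b_1+\lambda x_j$ for every $j$. The system cannot see the sign of $\lambda$, which is why the statement uses the two-sided notion of $X$-free (Remark~\ref{remark:negative_copies}); your proposal does not address this.

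Steps 2 and 3 are then unnecessary and partly misdirected. Supersaturation, the container count and the choice of good $n$ are internal to Theorem~\ref{theo:inf_seq_bounds}: see Proposition~\ref{prop:supersaturation_seq}, Lemma~\ref{lemma:inf_seq} and Corollary~\ref{cor:sequence}. They are already proved for every translation-invariant $A$ in every dimension $d$. The tiling bound $r(ab)\le b^d r(a)$ that you flag as delicate is the trivial Lemma~\ref{lemma:upper_multiplicative}. The sequence selection uses nothing but the lower bound and $r_A(n)\le n^d$.

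If you were to redo these steps yourself, Step 2 as stated would not suffice. The threshold is mis-scaled: a set of size $r_X(n')$ with $n'\ll n$ lies far below $r_X(n)$ and need not contain any copy. The correct form is $|B|\ge(1+\delta)(n/n')^d r_X(n')$. Moreover, the $(1+o(1))$ exponent needs more than many copies. It needs that sets of size $(1+o(1))r_X(n)$ admit no cover of $q$-weight at most $q|B|$ for $q$ as small as $\exp(-\sqrt{\log n}/4k)$. The paper obtains this only by sampling grids at the sub-polynomial scale $\ell=ne^{-\sqrt{\log n}}$ with prime common differences. At polynomially separated scales, the pigeonhole argument of Lemma~\ref{lemma:inf_seq} yields no smoothness, since only $O(\log\log n)$ steps fit.
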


On the other hand, we prove a bound as in Theorem \ref{theo:behague_etal} for
all values of $n$.
\begin{theorem}
        \label{theo:multi_sz_alln}
        Let $d$ be a positive integer and let $X \subset \bZ^d$ be a finite set
        such that $|X| \geq 3$. The number of $X$-free subsets of $[n]^d$ is
        at most $2^{O(r_X(n))}$.
\end{theorem}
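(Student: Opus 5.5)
The plan is to deduce the statement from the general "all $n$" counting theorem of Section~\ref{sec:main_thm_stmts}, the abstract form of Theorem~\ref{theo:kaps_alln}. That theorem requires two inputs: a supersaturation statement and a Behrend-type lower bound on the extremal function. The underlying machinery is the hypergraph container method. We encode $X$-freeness by the $|X|$-uniform hypergraph $\cH_n$ on vertex set $[n]^d$ whose edges are the non-trivial copies of $X$ inside $[n]^d$, positive and negative alike, since our notion of $X$-free forbids both. We would then iterate the container lemma until every container has size at most $C\,r_X(n)$. Throughout, the number of containers must stay $2^{O(r_X(n))}$, so that summing $2^{|C|}$ over containers gives $2^{O(r_X(n))}$. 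The codegree conditions are routine: any two points of $[n]^d$ lie in $O(n)$ copies of $X$ with given roles, and more generally fixing $j\ge 2$ points of a copy determines $b,\lambda$ up to $O(1)$ choices.

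For supersaturation we would use a Varnavides-type averaging. Suppose $B\subset[n]^d$ has $|B|\ge K\,r_X(m)\,(n/m)^d$. Average over the homothetic subgrids $b+\lambda[m]^d\subset[n]^d$, with $\lambda\le n/m$. A positive proportion of these subgrids meet $B$ in more than $r_X(m)$ points, and so contain a copy of $X$. Double counting, each copy lying in $O(1)$-many such subgrids for fixed $\lambda$, yields roughly $|B|\,n^{d}\, m^{-d}\cdot(\text{poly in } m)^{-1}$ copies. This is the amount needed to feed the container lemma at density scale $r_X(m)/m^d$.

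The crux, and the reason earlier work only handled infinitely many $n$, is to compare $r_X(m)/m^d$ with $r_X(n)/n^d$ at the scales $m$ forced on us by the container iteration. The density is non-increasing up to constants: tile $[n]^d$ by copies of $[m]^d$ to get $r_X(n)\lesssim (n/m)^d r_X(m)$. The hard direction is that it does not drop too fast. Here we would prove a Behrend-type lower bound
\[
r_X(n)\ \ge\ n^d\exp\bigl(-C\sqrt{\log n}\bigr)
\]
for every finite $X$ with $|X|\ge 3$, and then verify the hypothesis of the general theorem.

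For the lower bound, choose a generic integer linear form $\phi\colon\bZ^d\to\bZ$ that is injective on $X$. Then $\phi(X)$ is a set of at least three distinct integers, and a copy of $X$ in $\bZ^d$ maps to a copy of $\phi(X)$ in $\bZ$ with the same $\lambda$. Fix three distinct points $x_1,x_2,x_3$ of $\phi(X)$. A non-trivial copy must satisfy one non-trivial translation-invariant linear equation in three variables, namely the relation among $b+\lambda x_1$, $b+\lambda x_2$, $b+\lambda x_3$. By Ruzsa's Behrend-type construction for such equations there is a set $S\subset[O(n)]$ free of non-trivial solutions with $|S|\ge n\exp(-C\sqrt{\log n})$. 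Its preimage $\phi^{-1}(S)\cap[n]^d$ is $X$-free and has the required size. With this bound, the general theorem guarantees that at every scale the container iteration uses, $m\ge \exp(\Omega(\sqrt{\log n}))$, the loss $r_X(m)(n/m)^d/r_X(n)$ is $O(1)$ after a suitable choice of the sequence of scales.

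I expect this last point to be the main obstacle: the iteration must be organised so that losses in the density comparison do not accumulate across the $O(\log n)$ container rounds. I would handle it as the general framework does for $k$-APs. One either chooses the scales $m_i$ geometrically, using sub-multiplicativity $r_X(ab)\lesssim a^d r_X(b)$ together with the Behrend lower bound to bound the total number of "bad" scales, or one stops the iteration once containers reach size $r_X(n)\cdot\exp(O(\sqrt{\log n}))$. In the second case the final crude step of \eqref{eq:xfree_naive_bounds} applies within a container of that size and still contributes only $2^{O(r_X(n))}$, because that container has $o(r_X(n))$ "excess" elements beyond the threshold.
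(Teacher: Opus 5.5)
Your top-level reduction is the paper's proof. You encode $X$-freeness as $A$-freeness for a translation-invariant $A$ (Lemma~\ref{lemma:system_copiesx}; you omit this, but it is what lets you invoke Theorem~\ref{theo:main_alln}). You prove $r_X(n)\ge n^d\exp(-C\sqrt{\log n})$, and you apply Theorem~\ref{theo:main_alln} with $\alpha=1/2$. Your lower bound is a valid and simpler substitute for the appendix's Rankin induction when only $|X|\ge 3$ is needed. It uses a generic linear form $\phi$ injective on $X$ and the single equation $(u_3-u_2)y_1-(u_3-u_1)y_2+(u_2-u_1)y_3=0$, which makes $y_2$ a convex combination of $y_1,y_3$, so a Behrend sphere set avoids it. You must, however, average over translates $S+s$ before pulling back, as in the proof of Proposition~\ref{prop:rankin_szemeredi}, because the fibres of $\phi$ on $[n]^d$ have very unequal sizes. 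With Theorem~\ref{theo:main_alln} used as a black box, that finishes the proof; the theorem needs no supersaturation input. But you set out to supply supersaturation and to organise the scales yourself. The arguments you give at what you call the main obstacle do not work, so as written your account of why the bound holds for all $n$ has a genuine gap.

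\textbf{Where the self-supplied steps fail.}
\begin{enumerate}
\item The multiplicity claim in your Varnavides count is false. A copy lies in up to $m^d$ subgrids $b+\lambda[m]^d$ for each admissible $\lambda$. The admissible $\lambda$ are the divisors of the gcd of the coordinates of the copy's differences, and there can be many of them. This is the loss the paper pays with prime steps in Section~\ref{sec:inf_seq} and removes by working in $(\bZ/p\bZ)^d$.
\item A Behrend bound does not make $r_X(m)(n/m)^d/r_X(n)=O(1)$ for $m\ge\exp(\Omega(\sqrt{\log n}))$. If $r_X(N)=N^de^{-C\sqrt{\log N}}$, the ratio at $m=e^{\sqrt{\log n}}$ is $e^{C\sqrt{\log n}-C(\log n)^{1/4}}$.
\item Stopping at containers of size $r_X(n)e^{O(\sqrt{\log n})}$ does not work. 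Their excess over $r_X(n)$ is far larger than $r_X(n)$. Even counting only their subsets of size at most $r_X(n)$ gives $2^{O(r_X(n)\sqrt{\log n})}$.
\item Bounding the number of bad geometric scales is the mechanism of Lemma~\ref{lemma:inf_seq}. It only yields a sequence of good $n$.
\end{enumerate}

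\textbf{The missing idea.} What you need is supermultiplicativity, $r_A(a)r_A(b)\lesssim r_A(ab)$ (Lemma~\ref{lemma:lower_multiplicative}), not the sub-multiplicativity you cite. The paper applies it at $\ell=p/\lambda^{4k}$, so that $n/\ell\asymp\lambda^{4k}$. This gives $(\ell/n)^d\lambda r_A(n)\gtrsim\lambda e^{-O((\log\lambda)^{\alpha})}r_A(\ell)$, which exceeds $r_A(\ell)$ for large $\lambda$ because $\alpha<1$. So the Behrend hypothesis is only used at scale $n/\ell$. The density loss never has to be $O(1)$; it only has to be $o(\lambda)$.

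The paper pairs this with a no-cheap-cover statement rather than a copy count. Two points lie in a random grid of $(\bZ/p\bZ)^d$ with probability at most $(\ell/p)^{d+1}$. Hence a cover with $w_q\le q|B|$ and $q=\lambda^{-3}$ captures far fewer edges inside the grid than the grid contains points of $B$. The paper then runs the dyadic Morris--Saxton iteration, whose fingerprint costs $\sum_i r_A(n)/\lambda_i$ add up to $O(r_A(n))$.
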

Observe that these theorems generalise our results for $k$-APs, because a set in $\bZ$ is
$k$-AP-free if it does not contain a non-trivial copy of $X=\{1, \dots, k\}$.

\begin{remark}
        \label{remark:negative_copies}
        As we already mentioned, we do not allow for negative copies of $X$ in
        our definition of $X$-free sets, in contrast to the usual definition of
        $X$-free sets in previous work.
        Observe that both notions coincide in the case of symmetric sets
        $X=-X$, such as grids or arithmetic progressions. We take our definition
        because we may then describe $X$-free sets as sets free of solutions to
        a certain translation-invariant system of equations (see Section
        \ref{sec:main_thm_stmts} for more details), and apply our general
        framework to obtain Theorem \ref{theo:multi_sz_infseq} and Theorem
        \ref{theo:multi_sz_alln}. It is also worth mentioning that our methods
        would give a counting result such as Theorem \ref{theo:multi_sz_infseq}
        for sets free of positive copies of $X$, but do not immediately apply
        when trying to establish the analogous result to Theorem
        \ref{theo:multi_sz_alln}.
\end{remark}

\subsection{Further context and main tools}
In the setting of extremal graph theory, counting problems were initiated in the
1970s by Erd\H{o}s, Kleitman, and Rothschild \cite{1976.ekr}. In particular, they
proved that the number of $K_r$-free graphs on $[n]$ vertices is
$2^{(1+o(1))\ex(n, K_r)}$, where $\ex(n, H)$ is the maximum size of an $H$-free
graph (again the lower bound here simply follows from taking subsets of a $K_r$-free graph of maximum size). In the following years, this result was extended to all non-bipartite
graphs \cite{1986.efr} using the regularity method (see, for example,
\cite{2016.morris.saxton} for a more detailed history). However, the
progress on analogous results for bipartite $H$ was much slower. One of the main
reasons for this is that $\ex(n, H) = o(n^2)$ for bipartite $H$, 
which means that tools such as the regularity lemma of Szemerédi cannot be applied.
Another important reason is the fact
that precise asymptotics for $\ex(n, H)$ are not known for most bipartite
graphs, similarly to the case of $k$-APs.

The field was revitalized with the advent of the container method. In
particular, a flurry of results were proved using the theory of hypergraph
containers, a set of tools to count independent sets in hypergraphs
simultaneously developed by Balogh, Morris, and Samotij \cite{2015.bms} and
Saxton and Thomason \cite{2015.saxton.thomason}. We highlight two applications
that are particularly relevant for our work. Firstly, Morris and Saxton
\cite{2016.morris.saxton} established that the number of $C_{2\ell}$-free graphs
on $n$ vertices is $2^{O(n^{1+1/\ell})} = 2^{O(\ex(n, C_{2\ell}))}$. In fact, they also established a much finer
count on the number of $C_{2\ell}$-free graphs of a given size on $n$ vertices.
Secondly, using some of the ideas of Morris and Saxton's result and
inspired by the work of Balogh, Liu and Sharifzadeh on Theorem
\ref{theo:previous_theorem}, Ferber, McKinley and Samotij \cite{2018.fms}
practically settled the problem of counting $H$-free graphs for general bipartite $H$. 
In
particular, they proved the following theorem.
\begin{theorem}
        \label{theo:counting_graphs}
        Let $H$ be an arbitrary graph containing a cycle. Suppose that there are
        positive constants $\alpha$ and $A$ such that $\ex(n, H) \leq
        An^{\alpha}$ for all $n$. Then there exists a constant $C$ depending
        only on $\alpha, A$ and $H$ such that for all $n$, the number of
        $H$-free graphs on $n$ vertices is at most $2^{Cn^{\alpha}}$.
\end{theorem}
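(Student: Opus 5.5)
The statement is Theorem~\ref{theo:counting_graphs} of Ferber, McKinley and Samotij. My plan follows their strategy: a container-type supersaturation argument, iterated over a dyadic sequence of edge densities, together with a multiscale/balanced counting step.

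\emph{Step 1: balanced supersaturation.} I would first prove the following. Suppose $H$ contains a cycle and $\ex(m,H)\le Am^{\alpha}$ for all $m$; note that $\alpha>1$ for such $H$. Then every $n$-vertex graph $G$ with $e(G)\ge K n^{\alpha}$, for $K$ a large constant, contains a collection $\cH$ of copies of $H$ that is \emph{balanced}. Here balanced means that no small edge set $S$ lies in too many members of $\cH$: the codegree satisfies
\[
  \Delta_{|S|}(\cH)\lesssim \frac{|\cH|}{e(G)}\Big(\frac{n^{\alpha}}{e(G)}\Big)^{(|S|-1)/(e(H)-1)}
\]
up to constants. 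To do this, I would use the hypothesis at \emph{all} scales $m\le n$: averaging over random induced subgraphs on $m$ vertices, with $m$ chosen so that the expected number of edges exceeds $2Am^{\alpha}$, produces many copies of $H$. I would then build $\cH$ greedily, adding a copy only if it does not saturate any codegree constraint. Saturated sets are collected into a family whose removal leaves a graph that is still dense, because $\ex(\cdot,H)$ is small at every scale. This is the step I expect to be the main obstacle. Since only an upper bound on $\ex$ is assumed, with no lower bound, the balancing has to be driven purely by iterating the extremal bound at smaller scales. Getting codegree bounds with the right exponents, so that the container step loses only a constant, is delicate; I would first try induction on $|S|$ with the greedy rule.

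\emph{Step 2: containers.} Next I would feed the balanced family into the hypergraph container lemma on the $e(H)$-uniform hypergraph on $E(K_n)$. This produces a small fingerprint set determining a container with at most $(1-\delta)e(G)$ edges. Iterating, starting from $G=K_n$ and stopping once $e\le Kn^{\alpha}$, gives a container family $\cC$ with every container of size $O(n^{\alpha})$ and every $H$-free graph contained in some member of $\cC$. The fingerprint sizes at density $e$ are about $n^{\alpha}(e/n^{\alpha})^{-\beta}$ for some $\beta>0$ coming from the codegree exponents. The number of containers is then bounded by a product over dyadic stages of $\binom{n^2}{\text{fingerprint}}$.

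\emph{Step 3: summing the log-counts.} The total logarithm is $\sum_j n^{\alpha}2^{-\beta j}\log(\cdots)$, and this is where the $\log n$ factor must be absorbed. The fingerprints at stage $j$ are subsets of the previous container, which has size $2^{j}n^{\alpha}$ (up to constants), rather than subsets of all of $E(K_n)$. Hence $\log\binom{2^j n^{\alpha}}{n^{\alpha}2^{-\beta j}}=O(n^{\alpha}j2^{-\beta j})$, and the sum converges geometrically to $O(n^{\alpha})$. The very first stages, near $K_n$, need the stronger saturation available there, where the fingerprint is $o(n^{\alpha}/\log n)$. Finally, the number of $H$-free graphs is at most $|\cC|\cdot 2^{O(n^{\alpha})}=2^{Cn^{\alpha}}$.
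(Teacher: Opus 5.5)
The paper does not prove this theorem. It is quoted from \cite{2018.fms} as background, and the nearest argument in the paper is its arithmetic analogue in Section~\ref{sec:alln}. Your architecture is the right one: balanced supersaturation, iterated containers with fingerprints chosen inside the current container, and geometric summation over dyadic scales. Your Step~3 is also correct \emph{if} the fingerprint at edge count $e$ really has size $n^{\alpha}(e/n^{\alpha})^{-\beta}$ with $\beta>0$. The gap is that Step~1, as you state it, does not give this, and the ingredient that would give it is missing.

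\textbf{Your exponent is too weak.} The container lemma needs $\Delta_j(\cH)\lesssim \tau^{j-1}|\cH|/e(G)$ and returns fingerprints of size about $\tau e(G)$.
\begin{itemize}
\item Your exponent $(|S|-1)/(e(H)-1)$ forces $\tau\approx (n^{\alpha}/e(G))^{1/(e(H)-1)}$. The fingerprints then have size $n^{\alpha}(e(G)/n^{\alpha})^{1-1/(e(H)-1)}$, which is $\beta=1/(e(H)-1)-1<0$. At $G=K_n$ this is $n^{2-(2-\alpha)/(e(H)-1)}\gg n^{\alpha}$ for $\alpha<2$. So Step~2's claim of $\beta>0$ does not follow from Step~1.
\item Even the natural exponent $|S|-1$ (that is, $\beta=0$) fails. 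Each of the $\Theta(\log n)$ dyadic stages then costs a fingerprint of size $\Theta(n^{\alpha})$, so you incur at least a $\log n$ loss in the exponent, which is no better than the trivial bound.
\end{itemize}

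\textbf{What is needed.} You need a codegree bound with a power \emph{gain}:
$\Delta_{|S|}(\cH)\lesssim \frac{|\cH|}{e(G)}\bigl(\frac{n^{\alpha}}{e(G)}\bigr)^{(1+\varepsilon)(|S|-1)}$ for some $\varepsilon=\varepsilon(H,\alpha)>0$. To my recollection, this is the form of the balanced supersaturation theorem in \cite{2018.fms}, and it gives $\beta=\varepsilon$. Compare the paper's choice $q=\lambda^{-3}$ rather than $\lambda^{-1}$ in Proposition~\ref{prop:supersaturation_alln}. That choice makes the fingerprints in Theorem~\ref{theo:alln_containers} of size about $r_A(n)/\lambda_i^{2}$, so the sum over dyadic scales converges.

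\textbf{This gain is where the cycle hypothesis must enter.} Your Steps~2 and~3 never use it. Take $H$ to be the path with two edges. Then $\ex(n,H)=\lfloor n/2\rfloor$, but the $H$-free graphs are the matchings, and there are $2^{\Theta(n\log n)}$ of them. So no version of Step~1 that ignores $\alpha>1$ can produce $\beta>0$. You remark that $\alpha>1$, but you give no mechanism by which the multi-scale sampling and greedy balancing turn it into the extra $\varepsilon$. That conversion is the heart of the theorem, so as written the proof does not go through.
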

This proves that the number of $H$-free graphs is $2^{O(\ex(n, H))}$ if $\ex(n,
H) = \Theta(n^{\alpha})$ for some constant $\alpha > 0$ for bipartite $H$, a
conjecture that is widely believed to be true. In particular, their result
includes all previously known
cases of counting $H$-free graphs up to a constant in the exponent.

While our proof of Theorem \ref{theo:kaps_right_constant} is similar in spirit
to the original one of Balogh, Liu, and Sharifzadeh \cite{2017.bls}, the one of Theorem \ref{theo:kaps_alln} is closer to that of Theorem \ref{theo:counting_graphs} in
\cite{2018.fms} and incorporates an iterative scheme for shrinking containers which is an idea whose origins go back to the work of Morris and Saxton \cite{2016.morris.saxton}. For both theorems, the  main innovation is in providing stronger supersaturation theorems which we also prove in an abstract setting in order to be widely applicable. For our first main theorem giving the tight $(1+o(1))$ factor in the exponent, our stronger supersaturation result requires a certain smoothness condition. The Rankin-type lower bounds on the extremal function are crucial in being able to prove such a condition  for an infinite sequence of $n$.

\subsection{Notation}

We use $\ll$ and $\gg$ in the statement of hypotheses to define hierarchies of
variables, which should be read from  right to left. More concretely, if we state that
$\alpha \ll \beta$ in a hypothesis we mean that there is some increasing function $f$ such that whenever $\alpha \leq f(\beta)$, the corresponding conclusions hold. We also
use $f \lesssim g$ and $f = O(g)$ to mean that there exists a constant $C > 0$
such that $f \leq Cg$. In all cases, the implied constants are allowed to
depend on $\cX$, the family of configurations we forbid, and on $d$, the dimension
of the ambient space $[n]^d$. In particular, in the
case of $k$-APs, we treat $k$ as a constant. Finally, we
occasionally use $f(n) = o(1)$ to denote a function that tends to $0$ as $n$
grows. We omit floors and ceilings when they are not necessary. 

We also use some notation related to hypergraphs.
A
hypergraph $\cH$ over a set of vertices $V$ is a subset of $\cP(V)$, and it is
$r$-bounded if all the hyperedges $E \in \cH$ have size $|E| \leq r$.
Given a hypergraph $\cG$, we write
\[
        \langle \cG \rangle \coloneqq \bigcup_{E \in \cG} \left\{ F \subset
                V(\cG) \colon E \subseteq F \right\}
\]
for the up-set generated by $\cG$. We say that $\cG$ covers a hypergraph $\cH$
if $\cH \subseteq \langle \cG \rangle$. For any $p \in [0, 1]$, we define the
\emph{$p$-weight} of a hypergraph $\cG$ as
\[
        w_p(\cG) \coloneqq \sum_{E \in \cG} p^{|E|}.
\]
\section{Main theorem statements}
\label{sec:main_thm_stmts}
In the previous section, we stated our main counting results for $k$-AP-free sets,
$X$-free sets, or sets free of non-trivial solutions to certain systems of linear
equations.
As we already mentioned, these are all consequences of more general theorems. In
order to set these up, we need to introduce a flexible framework that is able to
describe patterns such as the one in the multidimensional Szemerédi theorem and
systems of linear equations at the same 
time. A way of doing this is to consider systems of equations where unknowns
take values in $\bZ^d$, or in $G^d$ for other groups $G$. 

To do so, we need appropriate notation. Given $k$ variables $x_1, \dots, x_k
\in G^d$ with $x_i^T = (x_i^1, \dots, x_i^d)$, we write $\vect(x_1, \dots, x_k)^T = (x_1^1, \dots, x_1^d, \dots, x_k^1,
\dots, x_k^d)$ for the vector of the corresponding components. In an abuse of
notation, given a vector of components $(v_1, \dots, v_{kd})$, we also write $v^j_i \coloneq
v_{(i-1)d + j}$ for the $j$-th component of the $i$-th variable.

\begin{definition}
        Let $m, k, d > 0$ be integers. We say the matrix $A
        \in \bZ^{m \times kd}$ of $m$ rows is \emph{translation-invariant in dimension
        $d$} if $\sum_{i \in [k]} a^j_i = 0$
        for every row $a$ in $A$ and component $1 \leq j \leq d$. 

        For an abelian group $G$, the \emph{system of equations associated to $A$} is
        \begin{equation}
                \label{eq:linear_config}
                A \cdot \vect(x_1, \dots, x_k) = 0,
        \end{equation}
        where $x_1, \dots, x_k$ are variables taking values in $G^{d}$. 
\end{definition}

\begin{remark}
    We will often refer to a matrix $A\in \bZ^{m\times kd}$ as simply being
\textit{translation-invariant}, suppressing the dimension $d$, which will be
clear from the context. \end{remark}

Observe that, if $A \in \bZ^{m\times kd}$ is a translation-invariant matrix,
then the associated system of equations is translation-invariant in the sense
that
        \[
                \vect(x_1, \dots, x_k) \in \ker A \iff \vect(x_1+x, \dots,
                x_k+x) \in \ker A
        \]
        for all $x, x_1, \dots, x_k \in G^d$.
\begin{definition}
        Let $m, k, d > 0$ be integers, $G$ an abelian group, and let $A \in \bZ^{m
        \times kd}$ be a translation-invariant matrix. We say a  solution $(x_1, \dots,
        x_k)$ to \eqref{eq:linear_config} is \emph{trivial} if $x_1 = \dots =
        x_k$. We also define
        \[
                \cS_A = \left\{\{x_1, \dots, x_k\} \subset G^d \colon \vect(x_1,
                \dots, x_k) \in \ker A \text{ and not all $x_i$ are equal} \right\}
        \]
        to be the family of underlying sets associated to non-trivial solutions of
        \eqref{eq:linear_config}. We say a set $B \subset G^d$ is $A$-free if it
        is $\cS_A$-free, and, when $G = \bZ$, we write
        \[
                r_A(n) = \max \left\{ |B| \colon B \subset [n]^d
                \text{ $A$-free}\right\}
        \]
        for the corresponding extremal threshold function. This generalises the
        case $d=1$, which we already defined in the introduction. Observe that the value of $r_A(n)$
        depends on $d$, which should always be clear from the context.
\end{definition}
\begin{remark}
        The role of $d$ may initially be confusing. If one takes $d=1$ in
        \eqref{eq:linear_config}
        it becomes a usual translation-invariant system of
        equations. Taking larger $d$ allows us to describe copies of $X \subset
        \bZ^d$ in the sense of the multidimensional Szemerédi theorem. Note that
        the system \eqref{eq:linear_config} is a system of equations  at the component level, that is on $x_1^1, \dots, x_1^d, \dots, x_k^1,
\dots, x_k^d$, yet translation-invariance is with respect to the $d$-dimensional variables $x_1,\ldots,x_k$.
\end{remark}
\begin{remark}
        \label{remark:trivial_sols}
        For certain systems of equations, our definition of trivial solutions
        is non-standard. For example, with $d=1$, the equation
        \[
                x_1 - x_2 = x_3 - x_4
        \]
        with $x_i \in \bZ$ (namely, the equation defining Sidon sets) is usually understood to have trivial solutions $(x, x, y,
        y)$ for all $x, y \in \bZ$.

        The standard definition for systems of equations associated to $A \in
        \bZ^{m \times kd}$ would be the following. Suppose there exists a
        partition $[k] = B_1 \sqcup \dots \sqcup B_g$ such that 
        \begin{equation}
                \label{eq:partition_trivial}
                \sum_{i \in B_t} c_i^j = 0
        \end{equation}
        for all rows $c$ in $A$, components $1 \leq j \leq d$ and
        indices $1 \leq t \leq g$. Setting $x_i = x_j$ for every $i, j$ belonging
        to the same set $B_t$ gives a solution to $A \cdot x = 0$. All solutions
        arising in this manner would be called trivial. This is the definition that is
        used in \cite{2006.shapira}, for example (see also \cite{2017.Rueetal}).

        However, for technical reasons we are interested in considering trivial
        solutions as we defined them, that is, such that all $x_i$ are equal. In
        the applications we are concerned with this distinction is not relevant.
        In the case of the multidimensional Szemerédi theorem, given two points
        of a copy of $X$, the whole copy is determined. Hence, if $x_1, \dots,
        x_k$ is a copy of $X$ and $x_i = x_j$ for $i \neq j$ then all $x_i$ are
        equal, which is a trivial solution in our sense. As for random systems
        of equations, a union bound  shows that there exists a constant $c =
        c(k)$ such that, for all but $c/h$ of the $(k, h)$ matrices $A \in
        \bZ^{m\times k}$, $A$ does not admit a partition satisfying
        \eqref{eq:partition_trivial}.
\end{remark}
In Section \ref{sec:applications} we see how arithmetic progressions,
copies of $X \subset \bZ^d$ as in the multidimensional Szemerédi theorem, or solutions to 
systems of linear equations as in Theorem \ref{theo:random_systems_infseq} are all instances of solutions to
systems of equations such as the one in \eqref{eq:linear_config}. Hence, the
extremal threshold function $r_{A}(n)$ is a
generalisation of the extremal threshold functions defined so far.
From this observation, as we will see in detail later, it follows that
Theorem \ref{theo:kaps_right_constant}, Theorem \ref{theo:multi_sz_infseq} and
Theorem \ref{theo:random_systems_infseq} are all consequences of the following statement, where
implicit constants are allowed to depend on $A$.
\begin{theorem}
        Let $m, d > 0$ and $k \geq 3$ be integers and let $A \in \bZ^{m\times kd}$ be a
        translation-invariant matrix. Suppose that
        \[
                r_A(n) \geq n^d\exp(-O(\log(n)^{\alpha}))
        \]
        for some $0 < \alpha < 1/2$.
        Then the number
        of $A$-free sets in $[n]^d$ is at most $2^{(1+o(1))r_A(n)}$ for infinite
        values of $n$.
        \label{theo:inf_seq_bounds}
\end{theorem}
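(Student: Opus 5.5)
We will prove Theorem~\ref{theo:inf_seq_bounds} with the hypergraph container method, using a supersaturation statement that is sharp near the extremal threshold. Let $\cH_n$ be the $k$-bounded hypergraph on $[n]^d$ whose edges are the sets in $\cS_A$ contained in $[n]^d$. The $A$-free sets are exactly the independent sets of $\cH_n$. The target is a container theorem of the following form: there are $2^{o(r_A(n))}$ containers $C\subset[n]^d$, each of size at most $(1+o(1))r_A(n)$, which together cover all independent sets. Summing $2^{|C|}$ over the containers then gives the bound.

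\textbf{Step 1: balanced supersaturation.} Fix a suitable $n$. We show that every $B\subset[n]^d$ with $|B|\geq (1+\varepsilon)r_A(n)$ contains many solutions, and that these solutions are spread out. Concretely, $B$ should contain a family of non-trivial solutions whose degrees are bounded in the sense the container lemma requires.

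The argument averages over embedded copies. Take a random "sub-box" map $x\mapsto a+q\cdot x$ from $[m]^d$ into $[n]^d$, where $m\ll n$ and $q$ ranges over suitable dilations. By translation invariance, images of solutions are solutions. Each image of $[m]^d$ meets $B$ in about $|B|(m/n)^d\geq(1+\varepsilon)r_A(n)(m/n)^d$ points. If this exceeds $r_A(m)$, every copy contains a solution.

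So we need the smoothness property
\[
r_A(m)\leq(1+\varepsilon/2)\,r_A(n)(m/n)^d
\]
for the relevant $m$. Summing over copies and dividing by the multiplicity with which each solution is counted (controlled because $q$ is bounded) gives about $\varepsilon\, n^{d}\cdot(\text{number of }q)/m^d$ solutions. A codegree bound comes from the fact that two points of a non-trivial solution determine $O(1)$ choices of the remaining structure in a sub-box of side $m$. Repeating the count with $B$ replaced by $B$ minus high-degree points, in the Balogh--Liu--Sharifzadeh manner, makes the family balanced.

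\textbf{Step 2: choosing $n$ (the main obstacle).} The smoothness property cannot be expected for all $n$. We extract it from the hypothesis. Let $f(n)=r_A(n)/n^d$. The same averaging argument shows $f$ is essentially non-increasing: $f(n)\le(1+o(1))f(m)$ for $m\mid n$, or more generally $m\le n$ up to boundary errors.

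Suppose that along a geometric-type sequence $n_i$ (say $n_{i+1}=n_i^{\beta}$, or $n_{i+1}=n_i\exp((\log n_i)^{\gamma})$) we had $f(n_{i+1})<(1-\varepsilon)f(n_i)$ for all large $i$. Then $f$ would decay at least like $(1-\varepsilon)^{\#\text{steps}}$, and the number of steps up to $n$ is of order $(\log n)^{1-\gamma}$. Choosing $\gamma<1-\alpha$ makes this contradict $f(n)\geq\exp(-O((\log n)^{\alpha}))$. So infinitely often $f(n)\geq(1-\varepsilon)f(m)$ with $m=n/\exp((\log n)^{\gamma})$, which is exactly the smoothness needed with $n/m$ large.

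The requirement $\alpha<1/2$ enters because the container iteration (Step 3) needs $n/m$ large enough that the number of solutions beats the entropy loss. Roughly, the $p$-weight conditions must hold with $p\approx r_A(n)^{-1/(k-1)}$-type scales, and this requires $(n/m)^{\Theta(1)}\geq\exp(\Theta((\log n)^{\alpha}))$, so we need $\gamma>\alpha$. Both $\gamma>\alpha$ and $\gamma<1-\alpha$ can hold only when $\alpha<1/2$. We would then let $\varepsilon\to0$ slowly along the sequence. Verifying that these exponents truly fit is where I expect the difficulty.

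\textbf{Step 3: containers.} Apply the hypergraph container lemma iteratively, in the standard Balogh--Morris--Samotij or Saxton--Thomason form for $k$-bounded hypergraphs, to sets of size at least $(1+\varepsilon)r_A(n)$, using Step 1 at each stage. Each application shrinks the container by a constant factor of its excess over $(1+\varepsilon)r_A(n)$, at a fingerprint cost of $O(r_A(n)\cdot(\text{degree ratio})^{-c})$ bits, which is $o(r_A(n))$ by the choice of $m$. After $O(\log n)$ rounds, a union bound gives at most $2^{o(r_A(n))}$ containers of size at most $(1+2\varepsilon)r_A(n)$. Hence the number of $A$-free sets is at most $2^{(1+3\varepsilon)r_A(n)}$ for the infinitely many $n$ selected in Step 2.
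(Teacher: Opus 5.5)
Your architecture matches the paper's: a sharp supersaturation statement obtained by sampling $B$ on random dilated subgrids, a smoothness condition $r_A(m)/m^d\le(1+\varepsilon)\,r_A(n)/n^d$ extracted along an infinite sequence from the lower bound on $r_A$, and then containers. Your exponent bookkeeping is also right. The paper takes $n/m=e^{(\log n)^{1/2}}$. It uses $\gamma<1-\alpha$ in Lemma~\ref{lemma:inf_seq} and $\gamma>\alpha$ in the fingerprint count $qn^d\log n=o(r_A(n))$; with $\gamma=1/2$ both reduce to $\alpha<1/2$. The relevant scale there is $q\approx(m/n)^{1/2k}$ up to logarithmic factors, not $r_A(n)^{-1/(k-1)}$.

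The genuine gap is in Step~1, which is the heart of the proof. The sampling has to deliver a bound, uniform over all pairs $x\neq y$, of the form $\Pr(x,y\in\mathbf G)\le(m/n)^{c}(\log n)^{O(1)}\Pr(x\in\mathbf G)$ for the random copy $\mathbf G$. Your sketch does not provide this, for four reasons.
\begin{itemize}
\item If the dilation $q$ is bounded by a constant, a close pair is about as likely to be covered as a single point. You then only get a count of order $\varepsilon|B|$, no better than what deletion gives trivially.
\item If $q$ ranges over all integers up to about $n/m$, the number of dilations for which a pair lies in a common copy is the number of divisors of $\gcd_i(x^i-y^i)$. This can be $\exp(c\log n/\log\log n)$, which swamps the gain $n/m=\exp((\log n)^{\gamma})$. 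Saying ``$q$ is bounded'' does not control this.
\item Your codegree argument (two points of a solution determine $O(1)$ solutions) holds for $k$-APs and copies of $X$, but fails for general $A$. For the random systems with $t\ge 4$ to which the theorem is applied, the solution space has dimension $t-1\ge 3$, so a pair typically lies in $\Theta(m)$ solutions inside a box of side $m$. Deleting high-degree vertices does not balance codegrees.
\item Boundary effects are not negligible, since your margin is only $\varepsilon\to0$. If copies have side comparable to $n$, points near the boundary of $[n]^d$ are covered with noticeably smaller probability. A set $B$ concentrated there breaks the claim that a random copy meets $B$ in about $|B|(m/n)^d$ points.
\end{itemize}

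The paper avoids all of this. Using Campos--Samotij containers, it states supersaturation dually: $\cS_A[B]$ admits no cover $\cG$ with all edges of size at least $2$ and $w_q(\cG)\le q|B|$. This needs no codegree structure of $\cS_A$ and no balancing. A single application of Theorem~\ref{theo:cs_containers} then bounds every container by $(1+o(1))r_A(n)$, so your iteration in Step~3 is unnecessary.

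To refute a cheap cover, the paper samples $\mathbf G=\mathbf b+\mathbf p\cdot[\ell]^d$, with $\mathbf p$ a uniformly random \emph{prime} in $[\sqrt{n/\ell}]$, $\mathbf b$ uniform in $[1,n-t]^d$, and $t=\sqrt{\ell n}$.
\begin{itemize}
\item This gives $\Pr(x\in\mathbf G)\ge(\ell/n)^d$ for $x\in Q=[t+1,n-t]^d$.
\item It also gives $\Pr(x,y\in\mathbf G)\lesssim\log^2 n\,(\ell/n)^{d+1/2}$ for all $x\neq y$, because $\mathbf p$ must be one of the at most $\log n$ primes dividing a nonzero coordinate of $x-y$.
\item A first-moment bound on $|B\cap\mathbf G|-|\cG\cap 2^{B\cap\mathbf G}|$, plus one deletion per covered edge, leaves a subset of a grid of side $\ell$ with more than $r_A(\ell)$ points and no edge of $\cG$. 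That subset contains a member of $\cS_A$ not covered by $\cG$, a contradiction.
\item Capping the dilation at $\sqrt{n/\ell}$ trades the gain $\ell/n$ for $(\ell/n)^{1/2}$, but keeps the border $[n]^d\setminus Q$ thin.
\item The case $|B\setminus Q|\ge\delta|B|/4$ is handled separately. A random subset of $B\setminus Q$ with deletions gives a set of size $\gtrsim q\delta^2|B|$ containing no edge of $\cG$. Tiling the border by $O((n/t)^{d-1})$ boxes of side at most $t$ shows that its $A$-free subsets have size $\lesssim(\ell/n)^{1/2}r_A(n)$, which is smaller.
\end{itemize}
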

For example, Theorem \ref{theo:kaps_right_constant} follows from
Theorem \ref{theo:inf_seq_bounds} using Rankin's lower
bound \eqref{eq:rankin_bound} on $r_k(n)$ and the fact that
$k$-APs may be described by a system of $k-2$ translation-invariant
equations. Analogously, Theorems
\ref{theo:kaps_alln}, \ref{theo:multi_sz_alln} and
\ref{theo:random_systems_alln} are instances of the following general theorem.
\begin{theorem}
        \label{theo:main_alln}
        Let $m, d > 0$ and $k\geq 3$ be integers and let $A \in \bZ^{m \times kd}$ be a
        translation-invariant matrix. Suppose that
                \[
                        r_A(n) \geq n^d\exp(-O(\log(n)^{\alpha}))
                \]
                for some $0 < \alpha < 1$.
        Then the number
        of $A$-free sets in $[n]^d$ is at most $2^{O(r_A(n))}$. 
\end{theorem}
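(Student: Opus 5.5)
We will run the hypergraph container method on the $k$-uniform (more precisely, $k$-bounded) hypergraph $\cH_n$ on vertex set $[n]^d$ whose edges are the sets in $\cS_A$ contained in $[n]^d$, combined with an iterative container-shrinking scheme in the spirit of Morris--Saxton and Ferber--McKinley--Samotij (Theorem~\ref{theo:counting_graphs}). The goal is to produce a family $\cC$ of containers covering all $A$-free sets, with $\log_2|\cC| = O(r_A(n))$ and every container of size $O(r_A(n))$. Summing $2^{|C|}$ over $C\in\cC$ then gives the bound $2^{O(r_A(n))}$.

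\textbf{Step 1 (balanced supersaturation).} Using translation-invariance, we first prove a supersaturation statement for every $B\subset[n]^d$ with $|B|\geq K r_A(n)$: $B$ contains a collection $\cG$ of non-trivial solutions with $|\cG|\gtrsim$ (number predicted by a random-like count) whose degrees of $j$-sets are well controlled. The plan is to average over sub-boxes (or dilated sub-progressions) $P=b+\lambda[m]^d$ of side $m$, chosen so that $|B\cap P|\geq 2r_A(m)$ for a positive proportion of them; each such piece contains a solution, and removing points and iterating yields many solutions. Codegrees are controlled because two distinct points of a non-trivial solution, together with the dilation parameter, essentially pin down the copy, up to $O(\cdot)$ choices of scale. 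The hypothesis $r_A(n)\geq n^d\exp(-O(\log n)^{\alpha})$ enters here. It gives $r_A(m)/m^d \approx r_A(n)/n^d$ up to subpolynomial losses at the relevant scales $m$, so the density gain when passing to sub-boxes is not eaten by the decay of $r_A$.

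\textbf{Step 2 (container lemma and iteration).} Feed this balanced family into the standard container lemma (with $p$-weights $w_p$ as in the notation section). This lets us replace any container $C$ with $|C|\geq K r_A(n)$ by $2^{O(\tau|C|\log(1/\tau))}$ containers of size at most $(1-\delta)|C|$, where $\tau$ is determined by the codegree bounds. Iterating from $C=[n]^d$ down to size $Kr_A(n)$ produces a product of container counts. We then sum the exponents $\sum_i \tau_i|C_i|\log(1/\tau_i)$ over the geometric sequence of sizes $|C_i|$.

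\textbf{Main obstacle.} The difficulty is making the total exponent $O(r_A(n))$ rather than $O(r_A(n)\cdot\text{polylog})$. For large containers $\tau_i$ is tiny, and the $\log(1/\tau_i)$ factor must be absorbed. The plan is to show $\tau_i|C_i|\log(1/\tau_i)\lesssim r_A(n)\,(r_A(n)/|C_i|)^{\epsilon}$ for some $\epsilon>0$, so that the sum is geometric and dominated by its last term. The Behrend-type lower bound with $\alpha<1$ is what should make the supersaturation strong enough for this. Concretely, the number of solutions in a set of size $t$ grows like $t^{k}/n^{(k-1)d}$ times factors $\exp(O(\log n)^\alpha)$, and these factors are $\ll$ any power of $|C_i|/r_A(n)$ once we restrict to the regime where $|C_i|/r_A(n)$ exceeds $\exp(C(\log n)^{\alpha})$. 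The intermediate regime $K r_A(n)\leq |C_i|\leq r_A(n)\exp(C(\log n)^\alpha)$ needs separate treatment. There I would use the weaker but uniform supersaturation from Step 1 with only $O((\log n)^\alpha)$ iterations. I expect that balancing this intermediate range, so that the $\log$ factors still sum to $O(r_A(n))$, is the crux, and I would first try choosing the sub-box scale $m$ adaptively depending on $|C_i|$.
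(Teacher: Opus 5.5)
Your architecture matches the paper's: supersaturation proved by sampling on random sub-grids, fed into iterated Campos--Samotij containers over dyadic scales in the style of Morris--Saxton. However, the step you flag as the crux is left open, and as proposed the argument does not give $2^{O(r_A(n))}$.

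\textbf{Where your argument stalls.} To gain anything from restricting $B$ to a grid of side $m$, you need $r_A(m)/m^d$ to be at most a small factor times $r_A(n)/n^d$. The Behrend hypothesis alone only gives this up to a factor $e^{O((\log n)^{\alpha})}$: compare $r_A(m)\le m^d$ with the lower bound on $r_A(n)$. That loss depends on $n$ rather than on $n/m$. This is why your supersaturation only becomes efficient once $|C|/r_A(n)\ge e^{C(\log n)^{\alpha}}$. Below that threshold there are $\Theta((\log n)^{\alpha})$ dyadic scales. A supersaturation whose strength does not improve with $\lambda=|C|/r_A(n)$ makes each of these scales cost at least order $r_A(n)$ in the exponent, for a total of order $r_A(n)(\log n)^{\alpha}$. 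Choosing $m$ adaptively does not by itself change this.

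\textbf{The missing idea.} What you need is supermultiplicativity, $r_A(a)r_A(b)\lesssim r_A(ab)$ (Lemma~\ref{lemma:lower_multiplicative}): take a product of two extremal sets in $(\bZ^2)^d$ and flatten it by a Freiman isomorphism. Take $|B|=\lambda r_A(n)$ and a random grid of side $\ell\approx n/\lambda^{4k}$. Supermultiplicativity then gives $(\ell/n)^d\lambda r_A(n)\gtrsim \lambda(\ell/n)^d r_A(n/\ell)\,r_A(\ell)\ge \lambda e^{-O((4k\log\lambda)^{\alpha})}r_A(\ell)$. So the Behrend bound is invoked only at scale $n/\ell\approx\lambda^{4k}$, and since $\alpha<1$ the loss $\lambda^{o(1)}$ is beaten by the density gain $\lambda$.

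This is Proposition~\ref{prop:supersaturation_alln}: uniformly in $n$, no cover $\cG$ of $\cS_A[B]$ has $w_q(\cG)\le q|B|$ with $q=\lambda^{-3}$. A container step at scale $\lambda$ then costs $O(r_A(n)\log\lambda/\lambda^2)$. Summed over all dyadic scales this is $O(r_A(n))$, with no regime split needed.

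\textbf{Two further corrections.}
\begin{enumerate}
\item Your codegree justification (two points pin down the copy) fails for general $A$. For example, Shapira-type systems with $t\ge 4$ have $t-2\ge 2$ free parameters. The paper never needs codegrees of solutions. It only needs that a random grid $\mathbf{b}+\mathbf{r}\cdot[\ell]^d$ in $(\bZ/p\bZ)^d$ contains two given distinct points with probability at most $(\ell/p)^{d+1}$.
\item Working modulo a prime $p\asymp n$ (Lemma~\ref{lemma:freiman_mod}) is what avoids the loss that random dilations in $\bZ$ would incur. That loss is divisor-type in general, or logarithmic in $n$ with prime dilations, and an $n$-uniform argument cannot afford it.
\end{enumerate}
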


We prove Theorem \ref{theo:inf_seq_bounds} in Section \ref{sec:inf_seq} and Theorem \ref{theo:main_alln} in Section \ref{sec:alln}. In Section \ref{sec:applications} we then derive the applications stated in the introduction and in Section \ref{sec:final-remarks} we provide some concluding remarks. 
\section{Tools}
Let us first collect several standard results that we use in different sections
of the paper. The reader is welcome to skip these and only
come to them when in need.
In the first place, we need an elementary bound on the growth of
the extremal function $r_{A}(n)$.
\begin{lemma} 
        \label{lemma:upper_multiplicative}
        Let $m', k, d > 0$ be integers, and 
        let $A \in \bZ^{m'\times kd}$ be a translation-invariant matrix.
        The extremal threshold $r_A(n)$ satisfies the inequality
        \begin{equation}
                \label{eq:upper_multiplicative}
                r_A(mn) \leq m^d r_A(n)
        \end{equation}
        for all integers $m, n > 0$.
\end{lemma}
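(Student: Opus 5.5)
Partition the grid $[mn]^d$ into $m^d$ translated copies of $[n]^d$ and count how much of an $A$-free set each copy can hold. For each $t = (t^1, \dots, t^d) \in \{0, \dots, m-1\}^d$, set
\[
        Q_t = \left\{ x \in [mn]^d \colon (t^j) n < x^j \leq (t^j+1) n \text{ for all } 1 \leq j \leq d \right\} = t n + [n]^d .
\]
These $m^d$ boxes are pairwise disjoint, and their union is $[mn]^d$.

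Let $B \subset [mn]^d$ be an $A$-free set with $|B| = r_A(mn)$.

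\begin{itemize}
\item The key claim is that each piece $B_t = (B \cap Q_t) - tn$ is an $A$-free subset of $[n]^d$.
\item Clearly $B_t \subset [n]^d$.
\item Suppose $B_t$ contained a non-trivial solution, i.e.\ $\{x_1, \dots, x_k\} \in \cS_A$ with all $x_i \in B_t$.
\item By the translation-invariance observation made after the definition of translation-invariant matrices, $\vect(x_1 + tn, \dots, x_k + tn) \in \ker A$.
\item The translated points are not all equal, since translation is injective. So $\{x_1 + tn, \dots, x_k + tn\} \in \cS_A$.
\item This set lies inside $B \cap Q_t \subset B$, contradicting that $B$ is $A$-free.
\end{itemize}

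Hence $|B \cap Q_t| = |B_t| \leq r_A(n)$ for every $t$. Summing over the $m^d$ boxes gives
\[
        r_A(mn) = |B| = \sum_t |B \cap Q_t| \leq m^d r_A(n).
\]

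The argument has no real obstacle. The only point needing care is that non-triviality of a solution is preserved under translation. This holds because the paper's notion of triviality ("all $x_i$ equal") is translation-invariant, and the matrix condition $\sum_i a_i^j = 0$ ensures that solutions themselves are preserved.
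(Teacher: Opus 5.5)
Your proposal is correct and matches the paper's proof: both partition $[mn]^d$ into the $m^d$ translated boxes $tn+[n]^d$ and use translation invariance to see that each piece $(B\cap Q_t)-tn$ is an $A$-free subset of $[n]^d$, then sum the bound $r_A(n)$ over the boxes. Your explicit check that non-triviality is preserved under translation is a detail the paper leaves implicit, but the argument is otherwise identical.
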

\begin{proof}
        Let $B \subset [mn]^d$ be a set with no non-trivial solution to
        \eqref{eq:linear_config}. We split the grid $[nm]^d$ into $m^d$ disjoint
        boxes of size $n^d$ each. Formally, we consider the partition 
        \[
                [nm]^d = \bigsqcup_{x \in [0, m-1]^d} (n \cdot x + [n]^d).
        \]
        Let $Q = nx+[n]^d$ be a box in this partition.
        Since containing a solution to
        \eqref{eq:linear_config} is a translation invariant property, $(B\cap Q) -
        nx \subset [n]^d$ is an $A$-free subset of $[n]^d$, so has size at most
        $|B \cap Q| \leq r_A(n)$. Adding over all possible boxes,
        the size of $B$ is at most $m^dr_A(n)$.
\end{proof}

\subsection{Freiman isomorphisms}
It will also be convenient to move sets between different abelian groups while
preserving their additive structure. We use the language of Freiman
homomorphisms (see, for example, \cite[Section 5.3]{2006.tao.vu}), and reprove
some of their properties in our multidimensional setting.
\begin{definition}
        Let $s \geq 2$ be a positive integer and $G, G'$ abelian groups.
        A \emph{Freiman homomorphism} of order $s$, or Freiman $s$-homomorphism,
        between subsets $B \subset G$ and $B' \subset G'$ is a map $\phi \colon B
        \to B'$ such that
        \begin{equation}
                \label{eq:freiman_iso}
                \phi(a_1) + \dots + \phi(a_s) = \phi(b_1) + \dots + \phi(b_s)
        \end{equation}
        whenever $a_1 + \dots + a_s = b_1 + \dots + b_s$ for $a_1, \dots a_s,
        b_1, \dots, b_s \in B$.
        A bijection $\phi \colon B \to B'$ is a \emph{Freiman isomorphism} of order $s$ if both
        $\phi$ and $\phi^{-1}$ are Freiman homomorphisms of order $s$.
\end{definition}

In particular, we need a morphism to flatten out product sets at the cost of
increasing the size of the ambient set by a constant factor.
\begin{lemma}
        \label{lemma:freiman_product}
        Given an integer $s \geq 2$,
        there exists a Freiman $s$-isomorphism that maps $[m_1] \times \dots
        \times[m_r]$ to a subset of $[s^{r-1}m_1\cdots m_r]$.
\end{lemma}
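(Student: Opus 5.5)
The plan is to use the standard base-expansion map. Identify $[m_1]\times\dots\times[m_r]$ with $[0,m_1-1]\times\dots\times[0,m_r-1]$ by shifting each coordinate by one; a translation is trivially a Freiman isomorphism of every order. Then define
\[
        \phi(y_1,\dots,y_r) = 1 + y_1 + M_1 y_2 + M_1M_2 y_3 + \dots + M_1\cdots M_{r-1} y_r,
        \qquad M_i \coloneqq s m_i .
\]
The weights are chosen so that each coordinate is written in a ``digit'' slot of width $s m_i$, large enough that sums of $s$ digits never carry.

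\textbf{Range.} Since $y_i \le m_i - 1$, we get
\[
\phi(y) \le 1 + \sum_{i} (m_i-1)\prod_{j<i} s m_j \le s^{r-1}m_1\cdots m_r .
\]
To see this, bound the sum by the telescoping expression $\prod_{j\le r-1}(sm_j)\cdot m_r - $ (positive terms), using that $(m_i-1)\prod_{j<i}sm_j \le \prod_{j\le i} s m_j\cdot\frac{1}{s}\cdot(\ldots)$. More cleanly, prove $1+\sum_{i\le t}(m_i-1)P_{i-1}\le s^{t-1}m_1\cdots m_t$ by induction on $t$, where $P_{i-1}=\prod_{j<i}sm_j$. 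So the image lies in $[s^{r-1}m_1\cdots m_r]$.

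\textbf{Homomorphism.} $\phi$ is affine-linear. Hence if $a_1+\dots+a_s=b_1+\dots+b_s$ in $\bZ^r$, then the sums of images agree: the constant $1$ appears $s$ times on each side.

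\textbf{Inverse is a homomorphism.} This is the main step, and it also gives injectivity (the case $s=1$ of the same argument, or directly by uniqueness of the mixed-radix expansion). Suppose $\sum_\ell \phi(a_\ell)=\sum_\ell\phi(b_\ell)$. Subtracting gives
\[
\sum_{i=1}^r D_i P_{i-1}=0, \qquad D_i\coloneqq\sum_\ell (a_\ell)_i-\sum_\ell (b_\ell)_i ,
\]
with $|D_i|\le s(m_i-1)<sm_i$. Reducing modulo $P_1=sm_1$, all terms with $i\ge2$ vanish, so $sm_1\mid D_1$, which forces $D_1=0$. Dividing by $sm_1$ and repeating the argument inductively yields $D_i=0$ for all $i$. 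This is exactly $\sum a_\ell=\sum b_\ell$ coordinatewise. Hence $\phi$ is a bijection onto its image and $\phi^{-1}$ is a Freiman $s$-homomorphism, so $\phi$ is the required Freiman $s$-isomorphism.

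The only point needing care is the no-carry bound $|D_i|<sm_i$. This is why the slot widths are $sm_i$, which costs the factor $s^{r-1}$ (the last coordinate needs no slack).
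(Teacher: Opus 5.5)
Your proof is correct and is essentially the paper's argument. The paper treats $r=2$ with the map $(a,b)\mapsto asm_2-b$ and then iterates; unrolled, this is a mixed-radix encoding with slot widths $sm_i$ like yours, and the paper handles the inverse by the same reduction modulo the slot width, using that the coordinate difference sums have absolute value less than $sm_i$. Your one-shot version additionally makes the range bound $1+\sum_i (m_i-1)P_{i-1}\le s^{r-1}m_1\cdots m_r$ explicit. Your inductive verification of that bound is the right argument, and the garbled ``telescoping'' sentence before it can simply be dropped.
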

\begin{proof}
        We prove the case $r=2$, since larger cases follow by applying the same argument
        repeatedly. We define
        \begin{align*}
                \phi \colon [m_1] \times [m_2] &\to [sm_1 m_2] \\
                (a, b) &\mapsto asm_2 - b.
        \end{align*}
        We claim that $\phi$ is a Freiman $s$-isomorphism when we
        restrict its codomain to its image. 
        Since $\phi$
        is linear it is a Freiman homomorphism of any order.
        Now, suppose that
        \[
                \phi(a_1, b_1) + \dots + \phi(a_s, b_s) = 
                \phi(a_1', b_1') + \dots + \phi(a_s', b_s')
        \]
        for $a_1, \dots, a_s,a_1', \dots a_s' \in [m_1]$ and
        $b_1, \dots, b_s,b_1', \dots b_s' \in [m_2]$. Then
        \begin{equation}
                \label{eq:freiman_iso_prod}
                (a_1 + \dots + a_s)sm_2 - (b_1 + \dots + b_s) =
                (a_1' + \dots + a_s')sm_2 - (b_1' + \dots + b_s').
        \end{equation}
        Taking this equality modulo $sm_2$ implies that
        \[
                b_1 + \dots + b_s \equiv b_1' + \dots + b_s' \mod sm_2,
        \]
        so $sm_2 \mid (b_1 + \dots + b_s - b_1' - \dots - b_s')$. Finally, since
        $|b_1 + \dots + b_s - b_1' - \dots - b_s'| < sm_2$, it follows that it
        must be equal to $0$. In other words, we have
        \[
                b_1 + \dots + b_s = b_1' + \dots + b_s'.
        \]
        After subtracting this from \eqref{eq:freiman_iso_prod} and dividing by
        $sm_2$ we obtain $a_1 + \dots + a_s = a_1' + \dots + a_s'$, so
        $\phi^{-1}$ is a Freiman $s$-homomorphism when restricted to
        $\phi([m_1]\times[m_2])$, which finishes our proof.
\end{proof}

Finally, we also need that Freiman isomorphisms preserve the property of being
$A$-free.
\begin{lemma}
        \label{lemma:freiman_afree}
        Let $m, k, d > 0$ be integers, let $G, G'$ be abelian groups, and
        let $A \in \bZ^{m \times kd}$ be a translation-invariant matrix. There exists $s > 0$ such
        that, for any set $Z \subset G$ 
        and Freiman $s$-isomorphism $\phi \colon Z \to Z'$, the product map
        $\phi^{\times d} = (\phi, \dots, \phi)^T$ satisfies
        \[
                A \cdot \vect(x_1, \dots, x_k) = 0 \iff A \cdot
                \vect(\phi^{\times d}(x_1), \dots, \phi^{\times d}(x_k)) = 0
        \]
        for every $x_1, \dots, x_k \in Z^d$.
        In particular, if  $B \subset Z^d$ is
        $A$-free then $\phi^{\times d}(B) \subset Z'^d$ is $A$-free.
\end{lemma}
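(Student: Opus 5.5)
Throughout, write $M$ for the largest absolute value of an entry of $A$. We take $s$ equal to $kM$, and if the number so obtained is less than $2$ we simply take $s=2$. The plan is to rewrite each scalar equation of the system as an equality between two sums of elements of $Z$ with the same number of summands, at most $s$. Then we use that a Freiman $s$-isomorphism preserves such equalities in both directions.

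\emph{Splitting each equation.} Fix a row $a$ of $A$ and a component $1\le j\le d$. The $j$-th component of each $x_i$ lies in $Z$, so the corresponding equation restricted to these coordinates reads
\[
\sum_{i\in[k]} a_i^j x_i^j = 0, \qquad x_i^j \in Z .
\]
Translation-invariance gives $\sum_i a_i^j=0$. Hence the positive coefficients add up to some $N\le kM$, and so do the absolute values of the negative coefficients. Moving the negative terms to the right and repeating each $x_i^j$ exactly $|a_i^j|$ times, the equation becomes an equality of two sums, each with $N$ terms from $Z$. If $N<s$, we add the same padding element $z_0\in Z$ to both sides $s-N$ times, to get exactly $s$ terms on each side. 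If $Z$ is empty there is nothing to prove.

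\emph{Applying the isomorphism.} Since $\phi$ is a Freiman $s$-homomorphism, the equality of $s$-fold sums survives after applying $\phi$. We then cancel the $s-N$ copies of $\phi(z_0)$ on each side, which is valid in the abelian group $G'$. Collecting terms again gives $\sum_i a_i^j \phi(x_i^j)=0$. The scalar equation involves only the $j$-th coordinates, and the $j$-th coordinate of $\phi^{\times d}(x_i)$ is $\phi(x_i^j)$. So this is exactly the corresponding scalar equation for $\vect(\phi^{\times d}(x_1),\dots,\phi^{\times d}(x_k))$. Doing this for every row and every component gives the forward implication. The converse is the same argument applied to $\phi^{-1}$, which is also an $s$-homomorphism.

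\emph{The ``in particular''.} Let $\{y_1,\dots,y_k\}\in\cS_A$ with $y_i\in\phi^{\times d}(B)$. Write $y_i=\phi^{\times d}(x_i)$ with $x_i\in B$. By the equivalence, $\vect(x_1,\dots,x_k)\in\ker A$. Since $\phi$ is a bijection, $\phi^{\times d}$ is injective, so not all $y_i$ equal implies not all $x_i$ equal. Therefore $B$ contains a non-trivial solution, contradicting the assumption that $B$ is $A$-free. I expect no step to be a genuine obstacle. The only care needed is in the bookkeeping: the padding trick that equalises the number of summands, and the observation that translation-invariance is exactly what makes both sides have the same number of terms.
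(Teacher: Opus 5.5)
\textbf{Gap: the splitting step misreads the system.} A row $a$ of $A\in\bZ^{m\times kd}$ gives a single equation $\sum_{i\in[k]}\sum_{j\in[d]}a_i^jx_i^j=0$ in $G$, and this equation couples all $d$ coordinates. It is not equivalent to the $d$ separate equations $\sum_{i\in[k]}a_i^jx_i^j=0$. Translation-invariance only says that the coefficients within each coordinate sum to zero; it does not make the system decouple. The matrices the paper needs really do couple coordinates: the rows built in Lemma \ref{lemma:system_copiesx} involve two different coordinates. So from $A\cdot\vect(x_1,\dots,x_k)=0$ you cannot extract the per-coordinate identities that you then push through $\phi$, and the forward implication is not proved.

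The same misreading makes your choice $s=\max(2,kM)$ too small, because it ignores $d$. Take $k=2$, $d=3$ and the single row encoding $x_1^1+x_1^2+x_1^3=x_2^1+x_2^2+x_2^3$, so $M=1$ and $s=2$. The bijection $\{0,1,3\}\to\{0,1,4\}$ fixing $0$ and $1$ is a Freiman $2$-isomorphism, since both sets are Sidon. The pair $x_1=(0,0,3)$, $x_2=(1,1,1)$ is a solution, but its image $(0,0,4),(1,1,1)$ is not.

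\textbf{Repair.} Run your own argument on the whole row at once, as the paper does. Write $\sum_{i,j}(a_i^j)_+x_i^j=\sum_{i,j}(a_i^j)_-x_i^j$. Summing the per-coordinate translation-invariance over $j$ gives $\sum_{i,j}a_i^j=0$. Hence both sides have the same number $N=\sum_{i,j}(a_i^j)_+\le kdM/2$ of summands. Your padding, applying $\phi$, and cancelling then go through verbatim once $s\ge 2$ and $s\ge kdM/2$; the paper takes $s=\max(2,kd\max_{i,j}|A_{i,j}|)$.

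With that change the rest of your proposal is sound. The converse via $\phi^{-1}$ and the ``in particular'' part are correct. Your padding step, together with the observation that translation-invariance is what equalises the numbers of summands, makes explicit a point the paper's proof only asserts.
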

\begin{proof}
        Let us prove the right-to-left implication, the converse follows from
        applying it to $\phi^{-1}$.
        We claim that $s=\max(2, kd \max_{i, j} |A_{i, j}|)$ suffices for the statement to
        hold. 
        Suppose that $\phi^{\times d}(x_1), \dots, \phi^{\times d}(x_k)$
        satisfy 
        \begin{equation}
                \label{eq:system_freiman}
                A \cdot \vect\left(\phi^{\times d}(x_1), \dots, \phi^{\times
                        d}(x_k)\right) = 0
        \end{equation}
        for $x_1, \dots, x_k \in B \subset Z^d$. Let $c \in \bZ^{kd}$ be a given row of
        $A$, which, by the previous equation, satisfies $c \cdot \vect\left(\phi^{\times d}(x_1),
        \dots, \phi^{\times d}(x_k)\right) = 0$. Let us write $x_i^T = (x_i^1, \dots,
        x_i^{d}$)
        for the components of $x_i$, and let
        $c^i_j \coloneqq
        c_{(i-1)d+j}$ for $1 \leq i \leq k$ and $1 \leq j \leq d$, that is,
        the coefficient in $c$ corresponding to $x_i^j$.
        Then
        \[
                \sum_{\substack{1 \leq i \leq k, \\ 1 \leq j \leq d}} (c_j^i)_+
                \phi(x_i^j) = 
                \sum_{\substack{1 \leq i \leq k, \\ 1 \leq j \leq d}}
                (c^i_j)_{-}
                \phi(x_i^j) \]
                by definition of $\phi^{\times d}$, where $a_+ = \max(0, a)$ and
                $a_- = \max(0,-a)$. Since sums on both sides have at most
        $|c_1| + \dots + |c_{kd}| \leq s$ terms, a Freiman $s$-isomorphism
        preserves them, implying that
        \[
                c \cdot \vect(x_1, \dots, x_k) = \sum_{\substack{1 \leq i \leq
        k, \\ 1 \leq j \leq d}} c^i_j
                x_i^j = 0.
        \]
        Doing this for all rows in $A$ gives $A \cdot \vect(x_1, \dots, x_k) =
        0$. If, furthermore, $B$ is $A$-free, we deduce that $x_1= \dots =  x_k$ and then $\phi^{\times d} (x_1)= \dots =
        \phi^{\times d}(x_k)$. That is, all solutions of \eqref{eq:system_freiman} are trivial, and
        $\phi^{\times d}(B)$ is $A$-free.
\end{proof}

Combining both lemmas allows us to flatten out $A$-free sets living in a
product space.
\begin{corollary}
        \label{cor:freiman_product}
        Let $m, k, d > 0$ be integers, and let $A \in \bZ^{m \times kd}$ be
        a translation-invariant matrix. Then there exists $s > 0$ such that,
        for any set $B \subset ([m_1] \times \dots \times [m_r])^d$, there
        exists a subset $B' \subset [s^{r-1} m_1 \dots m_r]^d$ with $|B'| = |B|$
        such that $B$ is $A$-free if and only if $B'$ is $A$-free.
\end{corollary}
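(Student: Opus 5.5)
The plan is to compose the two preceding results, Lemma~\ref{lemma:freiman_product} and Lemma~\ref{lemma:freiman_afree}. First, let $s_0>0$ be the constant supplied by Lemma~\ref{lemma:freiman_afree} for the matrix $A$. Since a Freiman $s_0$-isomorphism is also a Freiman $s'$-isomorphism for every $s'\le s_0$ (pad both sides of the relation with a repeated common element), we may assume $s_0\ge 2$ and set $s=s_0$. This $s$ depends only on $A$, and hence only on $m,k,d$ and the entries of $A$. It is independent of $r$ and of the $m_i$, which is exactly what the statement requires.

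Next, apply Lemma~\ref{lemma:freiman_product} with this $s$ to get a Freiman $s$-isomorphism $\phi\colon Z\to Z'$, where $Z=[m_1]\times\dots\times[m_r]$ and $Z'=\phi(Z)\subset[s^{r-1}m_1\cdots m_r]$. Here $Z$ is a subset of the abelian group $G=\bZ^r$, and $Z'$ is a subset of $G'=\bZ$. Now take $B\subset Z^d$ and put $B'=\phi^{\times d}(B)\subset [s^{r-1}m_1\cdots m_r]^d$. Because $\phi$ is a bijection onto $Z'$, the map $\phi^{\times d}$ is injective on $Z^d$, so $|B'|=|B|$.

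For the equivalence, Lemma~\ref{lemma:freiman_afree} gives directly that $B$ being $A$-free implies $B'$ is $A$-free. For the converse, apply the same lemma to the Freiman $s$-isomorphism $\phi^{-1}\colon Z'\to Z$, noting that $(\phi^{-1})^{\times d}(B')=B$. Alternatively, use the displayed equivalence of Lemma~\ref{lemma:freiman_afree}: solutions correspond bijectively under $\phi^{\times d}$, and since $\phi^{\times d}$ is injective, $x_1=\dots=x_k$ holds exactly when the images are all equal. So non-trivial solutions in $B$ correspond exactly to non-trivial solutions in $B'$.

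The only point that needs care is the ambient-group bookkeeping. Lemma~\ref{lemma:freiman_afree} is stated for $Z\subset G$, so we have to regard the product box as a subset of $\bZ^r$, with the $d$-fold power taken coordinatewise, and check that Lemma~\ref{lemma:freiman_product}'s map fits this framework and can be iterated for general $r$. Apart from that, and the harmless issue of the order $s$ being at least $2$, the argument is a direct composition.
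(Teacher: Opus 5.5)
Your proposal is correct and follows the paper's proof: take $s$ from Lemma~\ref{lemma:freiman_afree}, let $\phi$ be the Freiman $s$-isomorphism from Lemma~\ref{lemma:freiman_product}, set $B'=\phi^{\times d}(B)$, and read off the equivalence from Lemma~\ref{lemma:freiman_afree}. The paper leaves implicit the details you spell out (injectivity of $\phi^{\times d}$ giving $|B'|=|B|$, and arranging $s\ge 2$), and you handle them correctly.
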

\begin{proof}
        Let $s$ be the integer provided by Lemma \ref{lemma:freiman_afree}, and
        let $\phi \colon [m_1] \times \dots \times [m_r] \to Z \subset [s^{r-1}m_1 \dots
        m_r]$ be the Freiman
        $s$-isomorphism given by Lemma \ref{lemma:freiman_product}. Then
        $B' = \phi^{\times d}(B)$ is $A$-free if and only if $B$ is $A$-free on account of
        $\phi$ being a Freiman $s$-isomorphism and Lemma
        \ref{lemma:freiman_afree}.
\end{proof}

\subsection{Hypergraph containers}
Our proof uses the container method, developed simultaneously by Balogh, Morris,
and Samotij \cite{2015.bms} and Saxton and Thomason \cite{2015.saxton.thomason}
as a tool to count independent sets in hypergraphs.
We will use a recent formulation due to Campos and Samotij
\cite{2024.campos.samotij}.
The use of Campos-Samotij containers is not crucial for our proof, but it
simplifies the presentation. In particular, we will apply the container method
to the hypergraph induced by members of a given family $\cX$ contained in $[n]^d$, and previous
hypergraph container lemmas would require us to build a subhypergraph by
choosing only some of the members of $\cX$ in such a way that the degrees are properly
balanced. Instead, Campos-Samotij containers naturally include this step in their
own framework.
Concretely, we will use the following hypergraph container lemma, which is due
to Campos and Samotij \cite{2024.campos.samotij}.
\begin{theorem}
        \label{theo:cs_containers}
        Let $\cH$ be an $r$-bounded hypergraph with vertex set $V$ and $2 \leq |E| \leq
        r$ for all edges $E \in \cH$. For every $p
        \in (0, 1/(8r^2)]$, there exists a family $\cT \subseteq 2^V$ and
        functions
        \[
                g \colon \cI(\cH) \to \cT \quad \text { and } f \colon \cT \to
                2^V
        \]
        such that:
        \begin{enumerate}[label=(\roman*)]
                \item For each $I \in \cI(\cH)$, we have $g(I) \subseteq I
                        \subseteq f(g(I))$.
                        \label{item:containers_finger}
                \item Each $T \in \cT$ has at most $8r^2 p |V|$ elements.
                        \label{item:fingerprint_size}
                \item 
                        \label{item:containers_cover}
                        For every $T \in \cT$, letting $C \coloneqq f(T)$, there
                        exists a hypergraph $\cG$ on $C$ with
                        \[
                                w_p(\cG) \leq p|C|
                        \]
                        that covers $\cH[C]$ and satisfies $|E| \geq 2$ for all
                        $E \in \cG$.
        \end{enumerate}
\end{theorem}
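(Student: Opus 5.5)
Since Theorem~\ref{theo:cs_containers} is quoted from \cite{2024.campos.samotij}, the plan is to reconstruct the standard algorithmic proof of an ``efficient'' container lemma. Fix an independent set $I \in \cI(\cH)$ and fix, once and for all, a total order on $V$ used to break ties. We run a deterministic procedure that maintains three objects: a current candidate container $C \subseteq V$ (initially $V$), a current fingerprint $T \subseteq I$ (initially empty), and a hypergraph $\cG$ on $C$ that covers $\cH[C]$ and has all edges of size at least $2$ (initially $\cH$). The requirement is that every step depends on $I$ only through $T$. Then $g(I) := T$ at termination, and $f(T) := C$ is reconstructible from $T$ alone by replaying the algorithm, which gives item~\ref{item:containers_finger}.

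One step runs as follows. While $w_p(\cG) > p|C|$, let $v$ be the vertex of $C$ maximising the $p$-weight $w_p(\cG_v)$ of the edges of $\cG$ containing $v$, with ties broken by the fixed order. By averaging,
\[
w_p(\cG_v) \geq w_p(\cG)/|C| > p .
\]
We then branch on whether $v$ lies in $I$.
\begin{enumerate}[label=(\alph*)]
\item If $v \notin I$, delete $v$ from $C$ and delete all edges through $v$. Coverage of $\cH[C]$ is preserved, and $C$ shrinks by one.
\item If $v \in I$, add $v$ to $T$. Replace each edge $E \ni v$ with $|E| \geq 3$ by $E \setminus \{v\}$. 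Each edge $E \ni v$ with $|E| = 2$ becomes a singleton $\{u\}$; since $I$ is independent, $u \notin I$, so we delete $u$ from $C$ and delete all edges through $u$ instead of keeping a singleton.
\end{enumerate}
Whether or not $v \in I$ is recorded exactly by $T$, so the procedure is replayable from $T$. Edges that are shrunk still cover the original edges of $\cH$ restricted to the current $C$, so coverage is preserved and all edges keep size at least $2$.

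The expected main obstacle is item~\ref{item:fingerprint_size}, bounding $|T|$. Each fingerprint step rescales the weight of every edge through $v$ by a factor $1/p$, so the total weight can grow, and it is not clear a priori why the process stops after few fingerprint steps. The approach I would try first is the potential/``degree-capping'' trick of Campos and Samotij.
\begin{itemize}
\item Whenever a set $S$ already lies in $\cG$ (or becomes heavy, meaning the weight of edges containing $S$ exceeds $p^{|S|}$ times a constant depending on $r$), we keep only $S$ and discard its supersets.
\item This keeps every link weight bounded, so a fingerprint step adds at most about $r \cdot w_p(\cG_v)/p$ new weight.
\item The edges through $v$ that it removes carry weight $w_p(\cG_v) > p$.
\end{itemize}
Tracking a potential such as $\Phi = \sum_{j=2}^{r} (4r)^{-j}\, w_p(\cG^{(j)})\, p^{-j}$ over edges of each size $j$, one checks that each fingerprint step increases $\Phi$ by a controlled amount, while the lower-dimensional edges created can be charged either to subsequent deletions of vertices from $C$ or to the stopping rule. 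Since initially $\Phi \lesssim |V|$ and the process halts once $w_p(\cG) \leq p|C|$, this caps the number of fingerprint steps at $8r^2 p|V|$. The hypothesis $p \leq 1/(8r^2)$ makes the geometric factors $(4r)^{-j}$ dominate the $1/p$ blow-up.

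At termination $w_p(\cG) \leq p|C|$, $\cG$ covers $\cH[C]$, and all edges of $\cG$ have size at least $2$, which is item~\ref{item:containers_cover}. Setting $\cT$ to be the set of all possible final fingerprints completes the construction.
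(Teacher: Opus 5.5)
The paper gives no proof of this statement. It quotes Campos--Samotij, remarking only that their derivation of Theorem~A from Theorem~E yields the version for $r$-bounded hypergraphs whose edges all have size at least $2$. Your replayable-algorithm scaffolding is fine for items (i) and (iii). The invariant that makes step (b) sound is that every $F\in\cG$ is \emph{certified}, i.e.\ $F\subseteq E\subseteq F\cup T$ for some $E\in\cH$, so that $F\not\subseteq I$. But item (ii) is the whole content of the lemma, and you do not prove it. In fact, procedure (a)/(b) on its own violates it. Let $\cH=\{\{v_i,u_i,y\}\colon i\in[M],\ y\in Y\}$ with $|Y|=2M$ and $I=\{v_1,\dots,v_M\}\cup Y$, with ties broken in favour of the $v_i$. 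Each round fingerprints some $v_i$, turning its edges into the star $\{u_i,y\}$, $y\in Y$, of weight $2Mp^2$. It then deletes $u_i$, which is now the heaviest vertex. After $i$ rounds the loop keeps running while $2M(M-i)p^2>4M-i$. For $M\gg p^{-2}$ this gives $|T|\approx M=|V|/4$, far above $8r^2p|V|=72p|V|$ once $p<1/288$. So capping must genuinely change the algorithm, not just its analysis.

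The capping and potential you sketch do not fill this gap, for two reasons.

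\emph{The potential is not small initially.} Since $w_p(\cG^{(j)})p^{-j}=|\cG^{(j)}|$, $\Phi$ is just a weighted edge count and is not $O(|V|)$ at the start. For $\cH=K_n$, capping changes nothing and $\Phi=(4r)^{-2}\binom{n}{2}$.

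\emph{Heavy sets break the certification invariant.} A heavy set can lie inside $I$. For instance, let $c_r\geq 1$ be your heaviness constant. Take $3/p$ pairs $\{a_i,b_i\}\subseteq I$, and sets $Y,Z\subseteq V\setminus I$ with $|Y|=2c_r/p$ and $|Z|=(4+2c_r)/p$. Let $\cH$ consist of all edges $\{a_i,b_i,y\}$ and $\{a_i,z\}$. Each $\{a_i,b_i\}$ is heavy and enters $\cG$. Afterwards $w_p(\cG)>p|C|$ still holds and the heaviest vertex is some $a_i\in I$. Step (b) then shrinks $\{a_i,b_i\}$ to $\{b_i\}$ and deletes $b_i\in I$, which destroys $I\subseteq f(g(I))$.

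What is missing is a capping mechanism compatible with the deletion rule in (b), together with an actual counting argument bounding the number of fingerprint steps by $O(r^2p|V|)$. That is precisely what the Campos--Samotij argument supplies, and it is why the paper cites it rather than reproving it.
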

As is usual, we will refer to the family $\cT$ in the previous theorem as the
family of \emph{fingerprints} of the independent sets of $\cH$, and to the sets in
$f(\cT)$ as \emph{containers} of the independent sets. Notice that 
\cite[Theorem A]{2024.campos.samotij} is stated for uniform hypergraphs, but the
derivation of Theorem A from Theorem E given in the paper yields this stronger
version.

\section{Bounds over an infinite sequence}
\label{sec:inf_seq}
The main goal of this section is proving Theorem \ref{theo:inf_seq_bounds}.
Throughout the section, we
fix dimension $d \geq 1$, the number of variables $k \geq 3$, and a translation-invariant matrix $A \in
\bZ^{m \times kd}$ with its associated system of equations.

We start out with our main supersaturation result, which we prove using a random
sampling argument. Usual supersaturation results state that sets $B$ of size
greater than $r_A(n)$ contain not just one, but many solutions to the system
associated to $A$. In the language of Theorem \ref{theo:cs_containers}, it is
convenient to prove a dual statement, showing that the hypergraph $\cS_A[B]$,
for a set $B \subset [n]^d$ of size greater than $r_{A}(n)$, does not admit a
cheap cover, that is, a cover where $w_p(\cG) \leq p |B|$ for $p$ suitably small.
Combining such a result with item \ref{item:containers_cover} of Theorem
\ref{theo:cs_containers} gives an upper bound on the size of containers from
which we can then derive our counting results.

In order to establish our supersaturation result, we do roughly the following. Given a small potential cover $\cG$ of
$\cS_A[B]$,
we intersect $B$ with a random grid $\mathbf{G} = \mathbf{b} + \mathbf{d} \cdot
[\ell]^d$, where $\mathbf{b}$ is a random basepoint in $[n]^d$ and $\mathbf{d}$
a random common difference in $[n/\ell]$. If the grid is sparse enough, we are
able to show that a small alteration of $\mathbf{G} \cap B$ contains no element
of $\cG$. As long as we are able to ensure that $\mathbf{G} \cap B$ remains
larger than $r_{A}(\ell)$, this will give us a member of $\cS_A[B]$ that is not
covered by $\cG$.
To make this argument precise, we will actually
take $\mathbf{G}$ in a box inside $[n]^d$ to avoid some border issues, and only
consider prime common differences in order to control the number of possible
grids that contain a given pair of elements in $[n]^d$.

We have chosen to state the proposition in somewhat general terms because we believe
it clarifies the proof. However, the reader may want to keep in mind that we
will apply it with $\ell \sim n \exp(-\sqrt{\log n})$ and $\delta$ slowly going to
zero (say, $\delta \sim 1/\log\log n$, but the exact rate is not important). In
particular, the terms depending on $\delta$ and $\log n$ in
\eqref{eq:bound_cover_1}
will be negligible compared to the $(\ell/n)^{1/2k}$ term, and
may be ignored on a first reading. 

\begin{proposition}
        \label{prop:supersaturation_seq}
        Let $n \gg \ell \gg 1/\delta \gg 1$ with $n, \ell$ integers be such
        that
        \begin{equation}
                \frac{r_A(n)}{n^d} \geq \left(1-\frac{\delta}{8}\right)\frac{r_A(\ell)}{\ell^d}
                \label{eq:rk_hypothesis}.
        \end{equation}
        Let $B \subset [n]^d$ be a subset of size $|B| \geq (1+\delta)
        r_A(n)$. Then there is no cover $\cG$ of $\cS_{A}[B]$ with $|E| \geq 2$
        for all $E \in \cG$ such that $w_q(\cG) \leq q|B|$ for
        \begin{equation}
\label{eq:bound_cover_1}
                 \frac{\log(n)}{\delta^2} \left(\frac{\ell}{n}\right)^{1/2k} \ll
                 q \leq 1.
        \end{equation}
\end{proposition}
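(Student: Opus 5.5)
We argue by contradiction: suppose $\cG$ covers $\cS_A[B]$, every edge of $\cG$ has size at least $2$, and $w_q(\cG)\le q|B|$. We intersect $B$ with a random grid $\mathbf{G}=\mathbf{b}+\mathbf{d}\cdot[\ell]^d$. The grid lies in a slightly shrunken box so that it fits inside $[n]^d$, and $\mathbf{d}$ is a uniformly random prime in an interval $[D,2D]$ with $D\approx n/(2\ell)$ (slightly less, to leave room). On its own, $\mathbf{G}\cap B$ is typically larger than $r_A(\ell)$. We then delete one vertex from each edge of $\cG$ lying in $\mathbf{G}$, and a further random subsample at density $p$ keeps the number of such deletions small. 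The resulting set $B'$ contains no edge of $\cG$. However, after the affine Freiman isomorphism $x\mapsto (x-\mathbf{b})/\mathbf{d}$ into $[\ell]^d$, under which $A$-freeness is preserved because the system is translation-invariant and homogeneous, $B'$ still has more than $r_A(\ell)$ elements. So $B'$ contains a non-trivial solution, that is, an element of $\cS_A[B]$ inside $B'$. This solution is not covered by $\cG$: every $E\in\cG$ with $E\subseteq$ it would lie in $B'$, and all such edges were destroyed. This gives the contradiction.

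The first step is to estimate $\E|\mathbf{G}\cap B|$. Each point of $B$ away from the boundary lies in the grid for a $\approx \ell^d/n^d$ fraction of basepoints, uniformly in $\mathbf{d}$. Boundary points lose only an $O(\ell D/n)$-type fraction, which is absorbed into $\delta$. Hence
\[
\E|\mathbf{G}\cap B|\ge(1-\delta/8)\,|B|\,\ell^d/n^d\ge(1-\delta/8)(1+\delta)\,r_A(n)\,\ell^d/n^d\ge(1+\delta/2)\,r_A(\ell),
\]
using \eqref{eq:rk_hypothesis}. We then sample each vertex of $\mathbf{G}\cap B$ independently with probability $p$, giving $\mathbf{S}$, so that $\E|\mathbf{S}|\ge(1+\delta/2)\,p\,r_A(\ell)$. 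Since $\mathbf{S}$ is only a $p$-fraction of the grid, it cannot exceed $r_A(\ell)$ directly. To fix this we use Lemma~\ref{lemma:upper_multiplicative} in the form $r_A(\ell)\le (\ell/\ell')^d r_A(\ell')$ with $p=(\ell'/\ell)^d$ for a smaller integer $\ell'$, or equivalently take $q=p$ and run the grid argument with the sparser grid itself. Concretely, I will choose the grid parameters so that the grid density plays the role of $q$ and set $p=1$; the $1/2k$ exponent suggests the sampling density is about $q^{k}$ relative to $(\ell/n)^{1/2}$, and it is fixed by the next step.

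The main obstacle is bounding the expected number of edges of $\cG$ inside $\mathbf{G}\cap B$. This is where the prime differences matter. Take two distinct points $x,y\in E$. For both to lie in the grid, $\mathbf{d}$ must divide every coordinate of $y-x$, one of which is a nonzero integer of size at most $n$. It therefore has $O(\log n)$ prime divisors in $[D,2D]$, against $\gtrsim D/\log D$ primes in total. Given $\mathbf{d}$, the basepoint is forced up to $\ell^d/n^d$-type probability. Hence
\[
\Pr(E\subseteq\mathbf{G})\lesssim (\ell/n)^d\cdot\frac{\log^2 n}{D}\approx(\ell/n)^{d}\,\frac{\ell\log^2 n}{n}.
\]
For $|E|=j\ge2$, I will bound $\Pr(E\subseteq\mathbf{G})$ by the pair estimate to the power matched against $q^{j}$. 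That is, I compare $\sum_E\Pr(E\subseteq \mathbf G)$ with $w_q(\cG)\le q|B|$ using $\Pr(E\subseteq\mathbf{G})\le (\ell/n)^d\,\epsilon^{\,|E|}$, where $\epsilon$ is roughly $(\ell/n)^{1/2k}\log n$ (up to the power loss coming from $|E|\le k$, which is the source of the $1/2k$). Then, for $q\gg\epsilon/\delta^2$, the expected number of deleted vertices is at most
\[
\sum_E(\ell/n)^d(\epsilon/q)^{|E|}q^{|E|}\le(\ell/n)^d(\epsilon/q)^2\,q|B|\le\delta^2(\ell/n)^d|B|,
\]
which is negligible compared with the $\delta r_A(\ell)/2$ surplus. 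Choosing grid parameters so that the inequality $\Pr(E\subseteq \mathbf G)\le(\ell/n)^d\epsilon^{|E|}$ holds for all $2\le|E|\le k$ is the delicate calculation: interpolating between the pair bound and the trivial bound $(\ell/n)^d$ is what costs the exponent. Finally, the point counts concentrate well enough, or one simply uses linearity of expectation for $|\mathbf{G}\cap B|-\#\{\text{deleted}\}$, that some choice of $(\mathbf{b},\mathbf{d})$ gives $|B'|>r_A(\ell)$, and the contradiction above follows.
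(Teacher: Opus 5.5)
\textbf{The gap is in your first step}, the lower bound $\E|\mathbf{G}\cap B|\ge(1-\delta/8)|B|(\ell/n)^d$.

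With $D\approx n/(2\ell)$ the grid has side $\ell\mathbf{d}\ge n/2$, so the admissible basepoints form a box of side $L\approx n-\ell\mathbf{d}$. A point whose $i$-th coordinate is at distance $s<L$ from the boundary is then hit with probability smaller by a factor of roughly $s/L$. Since $L$ is of order $n$ for a constant proportion of the primes, there is no region of nearly full measure on which $\Pr(x\in\mathbf{G})\ge(1-O(\delta))(\ell/n)^d$. Your own boundary loss $O(\ell D/n)$ is $\Theta(1)$ here.

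More fundamentally, even a boundary of small volume cannot be ``absorbed into $\delta$''. $|B|$ is only about $r_A(n)=o(n^d)$ and $B$ is arbitrary, so the share of $B$ lying in the boundary can far exceed the boundary's share of the volume. With your choice of $D$, all of $B$ can sit in the low-probability region, e.g.\ $B\subseteq[n/10]^d$. Since $\mathbf{G}\cap B$ must beat $r_A(\ell)$ by a factor $1+\Omega(\delta)$, neither a constant-factor loss nor an uncontrolled concentration of $B$ near the boundary is affordable, and nothing in your argument rules these out.

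\textbf{How the paper handles this.} It uses a short, local grid and a genuine case split.

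The grid. The step is a uniform prime in $[(n/\ell)^{1/2}]$, so the grid has span at most $t=\sqrt{\ell n}\ll n$. With basepoint uniform in $[1,n-t]^d$, every $x\in Q=[t+1,n-t]^d$ satisfies $\Pr(x\in\mathbf{G})\ge(\ell/n)^d$. The price is a weaker pair bound, $\lesssim\log^2 n\,(\ell/n)^{d+1/2}$ instead of your $\log^2 n\,(\ell/n)^{d+1}$. The choice $t=\sqrt{\ell n}$ balances the border width $t/n$ against the pair gain $\ell/t$.

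Case $|B\cap\bar Q|\ge\delta|B|/4$. Here the grid is abandoned. One samples $B\cap\bar Q$ at rate $p=q|B\cap\bar Q|/(2|B|)$ and deletes a vertex from each surviving edge of $\cG$. This gives an edge-free set of size at least $q\delta^2|B|/64\ge(\ell/n)^{1/2k}r_A(n)$. On the other hand, tile the thin border $\bar Q$ by $\lesssim(n/t)^{d-1}$ boxes of side at most $t$; together with Lemma~\ref{lemma:upper_multiplicative} and \eqref{eq:rk_hypothesis}, this shows every $A$-free subset of $\bar Q$ has size $\lesssim(\ell/n)^{1/2}r_A(n)$. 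This case is where the factor $\delta^{-2}$ in the threshold for $q$ comes from.

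Case $|B\cap Q|\ge(1-\delta/4)|B|$. Only here does your grid argument run, with $p=1$ as you eventually chose.

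Your edge-counting step is fine but needs no interpolation. $\Pr(E\subseteq\mathbf{G})\le\Pr(x,y\in\mathbf{G})$ for any two distinct $x,y\in E$, together with $|\cG|\le q^{-k}w_q(\cG)\le q^{-(k-1)}|B|$, already gives $\E|\cG\cap 2^{\mathbf{G}\cap B}|\le\frac{\delta}{4}(\ell/n)^d|B|$ in the stated range of $q$.
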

\begin{proof}
        Suppose there exists such a cover $\cG$, which we may assume to be $k$-bounded. We aim to reach a contradiction
        by finding a member of $\cS_A$ lying in $B$ that is not covered by $\cG$.
        For convenience, we write $t = \sqrt{\ell n}$ for the geometric mean of
        $\ell$ and $n$. Define the box $Q = [t+1, n-t]^d$, which consists of
        removing some small borders from $[n]^d$. We now split the proof in two
        cases, according to how many elements of $B$ lie on the borders $\bar{Q}
        = [n]^{d} \setminus Q$.

Assume first that $|B \cap \bar{Q}| \geq \delta |B|/4$. In this
        case, the set $B$ has density significantly higher than expected on
        $\bar{Q}$, so we look for members of $\cS_A$ not covered by $\cG$ in $B
        \cap \bar{Q}$. To do this, let $\mathbf{Z} \sim (B \cap \bar{Q})_p$ be an independent
        random sample of the elements of $B \cap \bar{Q}$ with probability $p =
        q|B \cap \bar{Q}|/2|B|$. Notice that
        \[
                \E[|\mathbf{Z}|] \geq p |B \cap \bar{Q}|,
        \]
        and, using the fact that all members of $\cG$ have size at least $2$, it follows that
        \[
                \E[|\cG \cap 2^{\mathbf{Z}}|] \leq w_p(\cG) \leq \left(\frac{p}{q}\right)^2
                w_q(\cG) \leq \frac{p^2|B|}{q} \leq \frac{p |B \cap \bar{Q}|}{2}.
        \]
        By the previous two bounds and the first moment method, there exists a
        set $Z \subset B \cap \bar{Q}$ satisfying
        \[
                |Z| - |\cG \cap 2^{Z}| \geq \frac{p|B\cap\bar{Q}|}{2}.
        \]
        Select an arbitrary $x \in e$ for every $e \in \cG \cap 2^{Z}$, and
        define $Z'$ by removing all such $x$ from $Z$. Using first the
        definition of $p$, and then our assumption on the size of
        $|B\cap\bar{Q}|$ and \eqref{eq:bound_cover_1}, we obtain
        \begin{equation}
                \label{eq:bound_z}
                |Z'| \geq \frac{p|B\cap\bar{Q}|}{2} = \frac{q
                |B\cap\bar{Q}|^2}{4|B|} \geq \frac{q\delta^2}{64} |B| \geq
                \left(\frac{\ell}{n}\right)^{1/2k} r_{A}(n).
        \end{equation}
        We claim that $Z'$ contains a
        member of $\cS_A$, which, by construction, must not be covered by $\cG$.
        To prove this, suppose it does not and tile $\bar{Q} = Q_1 \cup \dots \cup Q_r$ with
        $Q_1, \dots, Q_r$ disjoint and every $Q_i$ a translated copy of a box
        $[h_i]^d$, with $1 \leq h_i \leq t$ box $[t]^d$, using a total amount of
        \[
                r \lesssim \frac{|\bar{Q}|}{t^d} = \frac{n^d - (n-2t)^d}{t^d}
                \lesssim \left(\frac{n}{t}\right)^{d-1}
        \]
        copies. If $Z'$ is $A$-free, the translation invariance property of
        solutions to \eqref{eq:linear_config}
        gives that
        \[
                |Z' \cap Q_i| \leq r_{A}(t),
        \]
        which, first adding up over all possible $Q_i$ and then using
        Lemma \ref{lemma:upper_multiplicative} and \eqref{eq:rk_hypothesis}, implies
        that
        \[
                |Z'| \leq r \cdot r_{A}(t) \leq r 
                \left(\frac{t}{\ell}\right)^d r_{A}(\ell) \lesssim
                r\left(\frac{t}{n}\right)^d r_{A}(n) \lesssim
                \frac{t r_{A}(n)}{n}=\left(\frac{\ell}{n}\right)^{1/2} r_A(n),
        \]
        a contradiction with \eqref{eq:bound_z} since $t=(\ell n)^{1/2}$.
        This proves our claim that $Z'$ contains a member of $\cS_A$ not covered
        by $\cG$.

        Hence, we may suppose now that $|B\cap \bar{Q}| \leq \delta |B|/4$, so
        that $|B \cap Q| \geq (1-\delta/4)|B|$. We
        proceed in a similar manner as in the previous case, but our random
        sample will be slightly more complicated. In order to find an uncovered
        member of $\cS_A$, we will intersect $B$ with a random grid
        \[
                \mathbf{G} = \mathbf{b} + \mathbf{p} \cdot [\ell]^d,
        \]
        where $\mathbf{p}$ is a prime chosen uniformly at random among those in
        $[t/\ell] = [(n/\ell)^{1/2}]$ and $\mathbf{b}$ is a basepoint chosen uniformly at random in
        $[1, n-t]^d$. Notice that $\mathbf{G} \subset [n]^d$. Let us first bound the probability
        that a point in $Q$ lies in $\mathbf{G}$.
        The 
        number of choices for $\mathbf{p}$ is $\pi(t/\ell)$, the total number of
        primes in $[t/\ell]$, and the number of choices for $\mathbf{b}$ is
        $(n-t)^d$. Given $x\in Q$, $v \in [\ell]^d$, and $p \in [t/\ell]$, we
        have $x - p\cdot v \in [1,n-t]^d$, on account of the definition of
        $Q$. Hence, there are $\pi(t/\ell)\ell^d$ outcomes of $\mathbf{G}$
        containing $x \in Q$, since all possible outcomes of $\mathbf{p}$ and
        relative positions of $x$ in $\mathbf{G}$ produce a corresponding basepoint
        lying in $[1,n-t]^d$. It follows that
        \begin{equation}
                \label{eq:lb_pg}
                \Pr(x \in \mathbf{G}) = \frac{\pi(t/\ell)\ell^d }{\pi(t/\ell)(n-t)^d}
                \geq \left(\frac{\ell}{n}\right)^d,
        \end{equation}
        for all $x \in Q$.

        On the other hand, given two distinct points $x, y \in Q$, the number of outcomes of $\mathbf{G}$
        containing both is at most $\ell^d \log(n)$. The $\ell^d$ factor
        accounts for the relative position of $x$ in $\mathbf{G}$, and the $\log(n)$
        factor follows from the fact that $\mathbf{p}$ must be a prime divisor of
        $0 \neq (x-y)_i \leq n$ for some $1 \leq i \leq d$, since there are at most $\log(m)$ different prime
        factors of a given integer $m > 6$. From this, together with the prime
        number theorem, we see that
        \begin{equation}
                \label{eq:upper_ppg}
                \Pr(x, y \in \mathbf{G}) \leq \frac{\ell^d \log(n)}{\pi(t/\ell)(n-t)^d}
                \lesssim \log(n)\frac{\ell\log(t/\ell)}{t}
                \left(\frac{\ell}{n}\right)^d \lesssim \log(n)^2
                \left(\frac{\ell}{n}\right)^{d+1/2},
        \end{equation}
        for all distinct $x,y \in Q$. With these bounds in hand, we are ready to
        carry out the central part of the proof. Let $\mathbf{Z} = B \cap \mathbf{G}$ be
        the elements of $B$ lying in our random grid. It follows from
        \eqref{eq:lb_pg} and our assumption on the size of $B \cap Q$ that
        \[
                \E[|\mathbf{Z}|] = \sum_{x \in B} \Pr(x \in \mathbf{G}) \geq \sum_{x \in B
                \cap Q} \Pr(x \in \mathbf{G}) \geq \left(\frac{\ell}{n}\right)^d |B
                \cap Q| \geq \left(\frac{\ell}{n}\right)^d
                (1-\delta/4)|B|.
        \]
        Since all members of $\cG$ have size at least $2$ and at most $k$, from
        \eqref{eq:upper_ppg} it also follows that
        \begin{multline*}
                \E[|\cG \cap 2^{\mathbf{Z}}|] \leq \sum_{e \in \cG} \Pr(e \in \mathbf{G})
                \leq |\cG|
                \log(n)^2\left(\frac{\ell}{n}\right)^{d+1/2} \leq \\
                \leq q^{-(k-1)}|B|\log(n)^2\left(\frac{\ell}{n}\right)^{d+1/2} \leq
                \frac{\delta |B|}{4} \left(\frac{\ell}{n}\right)^{d}.
        \end{multline*}
        Hence, by the two previous bounds and the first moment method there
        exist a grid $G$ and subset $Z = B \cap \mathbf{G}$ satisfying
        \[
                |Z|-|\cG \cap 2^{Z}| \geq \left(\frac{\ell}{n}\right)^d
                \left(1-\frac{\delta}{2}\right)|B| \geq (1+\delta/4) \left(\frac{\ell}{n}\right)^d
                r_{A}(n) \geq \left(1+\frac{\delta}{16}\right)r_{A}(\ell),
        \]
        where we used \eqref{eq:rk_hypothesis} in the last inequality.
        As before, select an arbitrary $x \in e$ for every $e \in \cG \cap
        2^{Z}$, and define $Z'$ by removing all such $x$ from $Z$. Finally,
        notice that $Z' \subset G$ is contained in a grid of size $\ell^d$ and
        has size larger than $(1+\delta/16)r_{A}(\ell)$. By the invariance
        of $\cS_A$ under translations and dilations,
        $Z'$ must contain a member of $\cS_A$, which, by
        construction, is not covered by $\cG$. This is a contradiction with our
        assumptions on $\cG$ and finishes the proof.
\end{proof}

Crucially, the proof depends on a certain smoothness condition on $r_{A}(n)$
(Equation \eqref{eq:rk_hypothesis} in the statement of the proposition),
which we will only be able to guarantee for an infinite sequence of values of
$n$. Since any sampling argument will forcefully have to relate $r_{A}(n)$ at
different values of $n$, such a hypothesis seems fundamentally necessary for our
approach to prove a supersaturation result of comparable strength.
We now prove such a smoothness condition for an infinite sequence. 
\begin{lemma}
        \label{lemma:inf_seq}
        If the extremal threshold function $r_A(n)$ satisfies
        \begin{equation}
                \label{eq:supra_behrend}
                r_A(n) \geq n^d\exp(-O(\log(n)^{\alpha}))
        \end{equation}
        for some $0 < \alpha < 1/2$, then
        there exists an infinite sequence $(n_i)_{i\geq 1}$ satisfying
        \[
                \frac{r_{A}(n_i)}{n_i^d} \geq \left(1-\frac{1}{\log \log
                        n_i}\right) \frac{r_{A}\left(n_ie^{-\log
        (n_i)^{1/2}}\right)}{n_i^de^{-d\log(n_i)^{1/2}}}.
        \]
\end{lemma}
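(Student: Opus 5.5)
We argue by contradiction, iterating the failure of the inequality along a chain of scales. Write $f(n) = r_A(n)/n^d$ and $\phi(n) = n e^{-\log(n)^{1/2}}$. Suppose the conclusion fails for all $n \geq N_0$, so that
\[
f(n) < \left(1 - \frac{1}{\log\log n}\right) f(\phi(n))
\]
for every $n \geq N_0$. (We ignore integer rounding of $\phi(n)$; Lemma \ref{lemma:upper_multiplicative} makes $f$ essentially monotone up to constant factors at nearby scales, which is enough to absorb it.)

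Fix a large $n$ and define $m_0 = n$ and $m_{j+1} = \phi(m_j)$. We stop at the first index $J$ with $m_J < N_0'$, for a suitably large threshold $N_0'$ (or at an intermediate scale, see below). Iterating the failed inequality gives
\[
f(n) \leq f(m_J) \prod_{j<J}\left(1-\frac{1}{\log\log m_j}\right) \leq \exp\left(-\sum_{j<J} \frac{1}{\log\log m_j}\right),
\]
using $f \leq 1$. On the other hand, the hypothesis \eqref{eq:supra_behrend} gives $f(n) \geq \exp(-C\log(n)^{\alpha})$. So it suffices to show that $\sum_{j<J} 1/\log\log m_j \gg \log(n)^{\alpha}$.

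To count the steps, set $L_j = \log m_j$, so that $L_{j+1} = L_j - L_j^{1/2}$. Consider only the steps while $L_j \geq L_0/4$. Each such step decreases $L$ by at least $(L_0/4)^{1/2}$, and $L$ must fall by $3L_0/4$ to leave this range. Hence the number of such steps is of order $L_0^{1/2} = \log(n)^{1/2}$. Along these steps $\log\log m_j \leq \log\log n$, so
\[
\sum_j \frac{1}{\log\log m_j} \gtrsim \frac{\log(n)^{1/2}}{\log\log n},
\]
and we may simply stop the chain at the scale where $L_j \approx L_0/4$. This gives
\[
\exp(-C\log(n)^{\alpha}) \leq f(n) \leq \exp\left(-c\frac{\log(n)^{1/2}}{\log\log n}\right).
\]
Since $\alpha < 1/2$, this is a contradiction for $n$ large. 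Consequently, for every $N_0$ there is some $n \geq N_0$ satisfying the inequality, which yields the infinite sequence $(n_i)$.

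The main obstacle is bookkeeping rather than substance. We must ensure all scales $m_j$ in the chain stay above $N_0$, which holds because $m_j \geq n^{1/4}$. We must also handle non-integer $\phi(m_j)$: either interpret $r_A$ at $\lfloor \cdot \rfloor$ consistently with the statement, or note that the lemma is read with the same rounding throughout. The exponent gap $\alpha < 1/2$ is exactly what the step count $\log(n)^{1/2}$ requires.
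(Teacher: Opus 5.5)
Your argument is the paper's argument. Both assume the inequality fails beyond some $n_0$, iterate $m_{j+1}=m_je^{-\log(m_j)^{1/2}}$ down to a fixed power of $n$ (the paper stops at $m^{1/2}$, you at $n^{1/4}$), bound the final density by $1$, and play the resulting $\exp\bigl(-c\log(n)^{1/2}/\log\log n\bigr)$ upper bound against the hypothesis with $\alpha<1/2$.

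One step, as written, justifies the wrong inequality. From ``each step decreases $L$ by \emph{at least} $(L_0/4)^{1/2}$'' you only get an \emph{upper} bound of $\tfrac32 L_0^{1/2}$ on the number of steps. The contradiction needs a \emph{lower} bound. The fix is that each decrement is at most $L_j^{1/2}\le L_0^{1/2}$, so at least $\tfrac34 L_0^{1/2}$ steps are needed to fall from $L_0$ to $L_0/4$. This is exactly the paper's observation $m_i/m_{i+1}\le e^{\log(m)^{1/2}}$. With that correction, your bound $\sum_j 1/\log\log m_j\gtrsim \log(n)^{1/2}/\log\log n$ and the conclusion go through.

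On rounding, which the paper simply ignores, your first suggestion would not suffice. Absorbing a constant factor at each of the $\asymp\log(n)^{1/2}$ steps would wipe out the $(1-1/\log\log m_j)$ gains. Your second suggestion is the right one. Read $r_A(x)$ as $r_A(\lfloor x\rfloor)$ and set $m_{j+1}=\lfloor\phi(m_j)\rfloor$. Since the denominator in the statement is $\phi(m_j)^d\ge m_{j+1}^d$, the failed inequality directly gives $f(m_j)<(1-1/\log\log m_j)\,f(m_{j+1})$, so rounding works in your favour. Each step also lowers $L$ by at most $L_0^{1/2}+\log 2$, so the step count is unaffected.
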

\begin{proof}
      Suppose for a contradiction that the conclusion does not hold,
        meaning there exists $n_0$ such that
        \begin{equation}
\label{eq:bound_rk}
                 \frac{r_A(n)}{n^d} < \left(1-\frac{1}{\log \log
        n}\right)
        \frac{r_A\left(ne^{-\log(n)^{1/2}}\right)}{n^de^{-d\log(n)^{1/2}}}
\end{equation}
        for all $n \geq n_0$. For large enough $m$, define $m_0 = m \geq n_0^2$ and
        iteratively define the decreasing sequence $m_{i+1} = m_i
        e^{-\log(m_i)^{1/2}}$ while $m_i \geq m^{1/2}$. Let $m_t$ be the last
        value where this inequality holds, so that $m_t \geq m^{1/2}$ but
        $m_{t+1} < m^{1/2}$. Define also the sequence of
        densities $\delta_i = r_A(m_i)/m_i^d$ for $0 \leq i \leq t$, and notice
        that \eqref{eq:bound_rk} becomes
        \[
                \delta_i < \left(1-\frac{1}{\log\log m_i}\right) \delta_{i+1} \leq
                \exp\left(-1/\log\log(m_i)\right) \delta_{i+1}.
        \]
        Repeatedly applying this bound gives
        \[
                \frac{r_A(m)}{m^d} = \delta_0 \leq
                \exp\left(-\sum_{i=0}^{t-1}\frac{1}{\log\log(m_i)}\right)\delta_{t}.
        \]
        Bounding $\delta_{t}$ by $1$ and multiplying by $m^d$ on both sides
        gives
        \begin{equation}
\label{eq:final_bound_rk}
                r_A(m) \leq m^d \exp\left(-\frac{t}{\log\log(m)}\right).
        \end{equation}
        In order to lower bound $t$, notice that $m_i/m_{i+1}  \leq
        e^{\log(m)^{1/2}}$, so 
        \[
                m^{1/2} \leq \frac{m_0}{m_{t+1}} = \frac{m_0 \dots m_{t}}{ m_1
                \dots m_{t+1}}\leq e^{(t+1)\log(m)^{1/2}},
        \]
        from which we deduce that $t \geq \log(m)^{1/2}/3$. Plugging this into
        \eqref{eq:final_bound_rk} gives
        \[
                r_A(m) \leq m^d\exp\left(-\frac{\log(m)^{1/2}}{3\log\log m
                }\right)
        \]
        for large enough $m$, a contradiction with \eqref{eq:supra_behrend}
        since $\alpha < 1/2$.
\end{proof}

Putting together both lemmas, we obtain a concrete form of our supersaturation
result for an infinite sequence of $n$.
\begin{corollary}
        \label{cor:sequence}
        Provided $r_A(n)$ satisfies \eqref{eq:supra_behrend},
        there exists an infinite sequence $(n_i)_{i \geq 1}$ such that the
        following holds for every $i \geq 1$. A subset $B \subset [n_i]^d$ of size
        $|B| \geq (1+8/\log\log n_i) r_{A}(n_i)$ admits no cover $\cG$ of $\cS_{A}[B]$
        satisfying
        \begin{equation}
                w_q(\cG) \leq q|B| \quad \text{and} \quad q \geq
                \exp\left(-\frac{\log(n_i)^{1/2}}{4k}\right)
        \end{equation}
        with $|E| \geq 2$ for all $E \in \cG$.
\end{corollary}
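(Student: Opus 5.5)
Take $(n_i)$ to be the infinite sequence given by Lemma \ref{lemma:inf_seq}, and for each $i$ apply Proposition \ref{prop:supersaturation_seq} with
\[
n = n_i, \qquad \ell = \ell_i \coloneqq n_i e^{-\log(n_i)^{1/2}}, \qquad \delta = \delta_i \coloneqq \frac{8}{\log\log n_i}.
\]
(If $\ell_i$ is not an integer, one rounds it; this changes the ratios only by a factor $1+o(1)$. Cleaner still is to choose the iterates in the proof of Lemma \ref{lemma:inf_seq} to be integers.) With this choice, $1-\delta_i/8 = 1-1/\log\log n_i$, so the conclusion of Lemma \ref{lemma:inf_seq} is exactly hypothesis \eqref{eq:rk_hypothesis}. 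The size assumption $|B|\geq(1+8/\log\log n_i)r_A(n_i)$ is exactly $|B|\geq(1+\delta_i)r_A(n_i)$.

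Next, check the hierarchy $n\gg\ell\gg1/\delta\gg1$ along the sequence. As $i\to\infty$ we have $\ell_i\to\infty$, $n_i/\ell_i = e^{\log(n_i)^{1/2}}\to\infty$, $1/\delta_i\to\infty$, and $\ell_i$ is astronomically larger than $\log\log n_i$. Since the implicit functions in the proposition are fixed, each required inequality holds once $i$ is large enough, so we may discard finitely many initial terms.

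The main point is matching the range of $q$ in \eqref{eq:bound_cover_1}. Here $(\ell_i/n_i)^{1/2k} = \exp(-\log(n_i)^{1/2}/(2k))$, so the lower threshold is
\[
\frac{\log n_i\,(\log\log n_i)^2}{64}\exp\left(-\frac{\log(n_i)^{1/2}}{2k}\right).
\]
For $q\geq \exp(-\log(n_i)^{1/4k})$ we need the ratio of $q$ to this threshold to tend to infinity. That ratio is at least
\[
\frac{64}{\log n_i(\log\log n_i)^2}\exp\left(\frac{\log(n_i)^{1/2}}{4k}\right),
\]
which tends to infinity because the exponential beats the polylogarithmic factor. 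So for large $i$ the condition "$\ll q$" holds whatever fixed function is implicit in it.

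The case $q>1$ is not covered by the proposition, so I handle it separately. If $q>1$ and $w_q(\cG)\leq q|B|$, then since every edge has size at least $2$ we get $w_1(\cG)\leq q^{-1}w_q(\cG)\leq|B|$, so $\cG$ is also a cheap cover at $q=1$ and the proposition excludes it. Combining the two cases, no admissible cover exists, which proves the corollary after discarding finitely many $n_i$. The only genuine obstacle is this exponent bookkeeping: the $1/4k$ in the statement leaves room below $1/2k$ to absorb the $\log n\,\delta^{-2}$ factor.
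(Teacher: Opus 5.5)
Your proposal is correct and is essentially the paper's proof: you use the same sequence from Lemma \ref{lemma:inf_seq} and the same choices $\ell = n_i e^{-\log(n_i)^{1/2}}$ and $\delta = 8/\log\log n_i$, and you absorb the $\log(n_i)(\log\log n_i)^2$ factor into the gap between the exponents $1/(2k)$ and $1/(4k)$, exactly as the paper does. Your separate treatment of $q>1$ (via $w_1(\cG)\le q^{-1}w_q(\cG)\le |B|$) and of the integrality of $\ell$ are harmless extras that the paper leaves implicit; just fix the typo where you wrote $\exp(-\log(n_i)^{1/4k})$ in place of $\exp(-\log(n_i)^{1/2}/(4k))$.
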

\begin{proof}
        Let $(n_i)_{i \geq 1}$ be the sequence provided by Lemma
        \ref{lemma:inf_seq}. For a fixed $i \geq 1$, set $\delta = 8/\log \log
        n_i$ and $\ell = n_ie^{-\log (n_i)^{1/2}}$.  Condition
        \eqref{eq:rk_hypothesis} is guaranteed by Lemma \ref{lemma:inf_seq}, and
        $n \gg \ell \gg 1/\delta \gg 1$ is ensured by removing the first terms
        of the sequence $(n_i)$ if necessary. Applying Proposition
        \ref{prop:supersaturation_seq} gives that there exists no cover $\cG$ of
        $\cS_A[B]$ satisfying
        \[
                w_q(\cG) \leq q|B|,
        \]
        with $|E| \geq 2$ for all $E \in \cG$ and
        \[
                q \geq C\log(n_i)\log\log(n_i)^2
                \exp\left(-\frac{\log(n_i)^{1/2}}{2k}\right)
        \]
        for some constant $C > 0$ large enough. Again taking $n_i$ large enough
        by possibly suppressing some initial terms of the sequence, we may absorb
        the logarithmic terms into the exponential and conclude.
\end{proof}

With this in hand, we are ready to prove the main theorem of this section.

\begin{proof}[Proof of Theorem \ref{theo:inf_seq_bounds}]
        Let $(n_i)_{i \geq 1}$ be the sequence provided by Corollary
        \ref{cor:sequence}.
        Given large enough $n$ belonging to this sequence, set 
        $q = q(n) = \exp(-\log(n)^{1/2}/4k)$, 
        and apply Theorem \ref{theo:cs_containers} to $\cS_{A}[[n]^d]$, which has
        edge size at most $k$ and at least $2$, since we avoid trivial solutions
        to the system associated to $A$. The
        theorem provides  a family of containers
        $\cC$ satisfying
        \begin{itemize}
                \item The total number of containers is bounded by the number of
                        possible fingerprints of size $8k^2qn^d$, in other words
                        \[
                                \log(|\cC|) \lesssim qn^d \log(n) =
                                \exp\left(-\frac{\log(n)^{1/2}}{4k}\right) n^d \log(n)=
                                o(r_{A}(n)),
                        \]
                        where we used the lower bound on $r_{A}(n)$ from the
                        statement's hypothesis.
                \item Every $A$-free subset of $[n]^d$ is a subset of some $C
                        \in \cC$.
                \item For every $C \in \cC$, there exists a cover $\cG$ of
                        $\cS_{A}[C]$ satisfying \[ w_q(\cG) \leq q |C| \] with
                        $|E| \geq 2$ for all $E \in \cG$.
        \end{itemize}
        From Corollary \ref{cor:sequence} we deduce that $|C| \leq (1+8/\log\log
        n)r_{A}(n)$ for all $C \in \cC$. Since every $A$-free subset of
        $[n]^d$ is contained in some $C \in \cC$, it follows that the total
        number of $A$-free subsets in $[n]^d$ is at most
        \[
                \sum_{C \in \cC} 2^{|C|} \leq |\cC| 2^{(1+o(1))r_{A}(n)} \leq
                        2^{r_{A}(n)(1+o(1))},
        \]
        thus concluding the proof.
\end{proof}
\section{Bounds for all \texorpdfstring{$n$}{n}}
\label{sec:alln}
This section is dedicated to proving Theorem \ref{theo:kaps_alln}, which
bounds the number of $A$-free sets in $[n]^d$ by $2^{O(r_A(n))}$ for all
$n$. In the whole section, we fix dimension $d \geq 1$, the number of variables $k \geq 3$,
and $A \in \bZ^{m\times kd}$ a translation-invariant matrix with its associated
system of equations.

As in the previous section, the main ingredient of our proof is a
supersaturation result, phrased in the language of cheap covers. The proof of
this result is in the same spirit as the previous one. The key
difference is the fact that we are not allowed to pass to a subsequence of valid
$n$, and thus we cannot control the growth of $r_{A}(n)/n^d$ as we did in
Lemma \ref{lemma:inf_seq}. Instead of this, we use the following
supermultiplicativity property of $r_A(n)$, which is a straightforward
generalisation of a property observed by Ruzsa \cite[Theorem 6.3]{1993.ruzsa},
and follows from a product construction.
\begin{lemma}
\label{lemma:lower_multiplicative}
        The extremal threshold $r_A(n)$ satisfies
        \begin{equation}
                \label{eq:lower_multiplicative}
                r_A(m)r_A(n) \lesssim r_A(mn)
        \end{equation}
        for all $m, n \in \bZ_{>0}$.
\end{lemma}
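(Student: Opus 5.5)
We use the product construction. Let $B_1 \subset [m]^d$ and $B_2 \subset [n]^d$ be $A$-free sets of maximal sizes $r_A(m)$ and $r_A(n)$. The natural candidate is the product set
\[
        B_1 \times B_2 = \{(x, y) \colon x \in B_1,\, y \in B_2\},
\]
viewed as a subset of $([m]\times[n])^d$ by pairing coordinates componentwise, so each point of $([m]\times[n])^d$ is a $d$-tuple of pairs $(x^j, y^j)$. Corollary \ref{cor:freiman_product} with $r=2$ then gives an $A$-free set $B' \subset [smn]^d$ of the same size $r_A(m)r_A(n)$, where $s$ depends only on $A$. Lemma \ref{lemma:upper_multiplicative} gives $r_A(smn)\le s^d r_A(mn)$, so
\[
        r_A(m)r_A(n) \le s^d\, r_A(mn),
\]
which is \eqref{eq:lower_multiplicative} with implicit constant $s^d$.

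The remaining and main step is to check that $B_1\times B_2$ is $A$-free in the group $(\bZ\times\bZ)^d$. The system $A\cdot\vect(z_1,\dots,z_k)=0$ has integer coefficients and acts coordinatewise on $\bZ\times\bZ$. Hence $(z_1,\dots,z_k)$ with $z_i=(x_i,y_i)$ is a solution if and only if both $A\cdot\vect(x_1,\dots,x_k)=0$ and $A\cdot\vect(y_1,\dots,y_k)=0$ hold in $\bZ^d$. Here we regroup the $d$ pairs so that $x_i\in B_1$ and $y_i\in B_2$. Since $B_1$ is $A$-free, $x_1=\dots=x_k$. Since $B_2$ is $A$-free, $y_1=\dots=y_k$. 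So $z_1=\dots=z_k$, and every solution is trivial in our sense.

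This is exactly where the non-standard notion of triviality from Remark \ref{remark:trivial_sols} is convenient. With the partition-based notion, the two factors could be trivial for different partitions, and the product solution would be non-trivial. With ours, triviality is "all variables equal", and that property is closed under products.

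Finally, we should check that Corollary \ref{cor:freiman_product} applies as stated. It takes a set in $([m_1]\times[m_2])^d$ and transfers $A$-freeness via $\phi^{\times d}$. Lemma \ref{lemma:freiman_afree} needs $A$-freeness in the ambient group of $Z=[m]\times[n]\subset\bZ^2$, which is what we just verified.
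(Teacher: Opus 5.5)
Your proposal is correct and follows the paper's proof essentially step for step. You show the product $B_1\times B_2\subset(\bZ^2)^d$ is $A$-free because projection onto each factor is linear and a trivial solution means all variables are equal, then flatten it into $[smn]^d$ via Corollary~\ref{cor:freiman_product} and use Lemma~\ref{lemma:upper_multiplicative} to get $r_A(smn)\le s^d r_A(mn)$. Your remark that the partition-based notion of triviality is not closed under products, while ``all variables equal'' is, correctly explains why the paper's non-standard definition is the convenient one here.
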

\begin{proof}
        Let $B \subset[m]^d$ be an $A$-free set of size $r_A(m)$, and let $C
        \subset [n]^d$ be an $A$-free set of size $r_A(n)$ in $[n]^d$. Consider
        the product set $Z = B \times C \subset (\bZ^2)^d$. We claim
        that $Z$ is $A$-free with underlying group equal to $\bZ^2$.
        Indeed, let $z_1, \dots, z_k \in Z^k$  be a solution of
        \[
                A \cdot \vect(z_1, \dots, z_k) = 0.
        \]
        Since $z_i = (b_i, c_i)$ with $b_i \in B$ and $c_i \in C$,
        and the
        projection to a given coordinate of $\bZ^2$ is a linear operation,
        it follows that
        \[
                A \cdot \vect(b_1, \dots, b_k) = 0.
        \]
        Since $B$ is $A$-free, this implies that $b_1 = \dots = b_k$.
        Analogously, it also holds that $c_1 = \dots = c_k$, so $z_1 = \dots =
        z_k$,
        which is a trivial solution. In other words, $Z$
        is $A$-free, as we claimed. Finally, applying Corollary
        \ref{cor:freiman_product} gives a constant $s > 0$ depending on $A$ and
        an $A$-free set $Z' \subset [s mn]^d$ of size $|Z'| = |Z| = |B||C| =
        r_A(m)r_A(n)$. This in turn gives that $r_A(m)r_A(n) \leq r_A(smn)$, and
        an application of Lemma \ref{lemma:upper_multiplicative} finishes the proof.
\end{proof}

Compared to Lemma \ref{lemma:inf_seq}, supermultiplicativity as in
Lemma \ref{lemma:lower_multiplicative} gives
significantly weaker control on the growth of $r_A(n)/n^d$ and a
correspondingly weaker supersaturation result, from which we only deduce Theorem \ref{theo:kaps_alln} after iterated applications of the container theorem. As a side effect of these weaker bounds, we cannot afford the logarithmic penalty
introduced by taking grids with prime common difference in our previous
argument, so we take another route by shifting
the whole problem to the modular setting. Hence, we need that moving a given set
to the modular setting preserves members of $\cS_A$, which we encode
in the next lemma.
\begin{lemma}
        Given an integer $s > 0$, there exists a constant $C > 0$ such that, for
        every integer $n > 0$, there is a prime $p \leq Cn$ that makes the
        natural projection map
        \begin{align*}
                \rho \colon [n]^d &\to (\bZ/p\bZ)^d \\
                (x_1, \dots, x_d) &\mapsto (x_1, \dots, x_d) \mod p
        \end{align*}
        a Freiman $s$-isomorphism when restricted to its image.
        \label{lemma:freiman_mod}
\end{lemma}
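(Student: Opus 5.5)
The idea is to take $p$ to be any prime in $(sn, 2sn]$; such a prime exists by Bertrand's postulate, so $C = 2s$ works. The map $\rho$ is the restriction of the group homomorphism $\bZ^d \to (\bZ/p\bZ)^d$ to $[n]^d$. Hence it is automatically a Freiman homomorphism of every order, and in particular of order $s$. The only thing to check is that $\rho$ is injective and that its inverse, defined on $\rho([n]^d)$, is also a Freiman $s$-homomorphism.

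Injectivity is immediate. If $x, y \in [n]^d$ satisfy $x \equiv y \pmod p$ componentwise, then each $|x_j - y_j| < n < p$, so $x = y$. (This is in fact the case $s=1$ of the argument below.)

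For the inverse, suppose $a_1,\dots,a_s,b_1,\dots,b_s \in [n]^d$ satisfy
\[
\rho(a_1)+\dots+\rho(a_s) = \rho(b_1)+\dots+\rho(b_s)
\]
in $(\bZ/p\bZ)^d$. We work one coordinate $j \in [d]$ at a time. In coordinate $j$ this says that the integer
\[
D_j = \sum_{i=1}^s a_i^j - \sum_{i=1}^s b_i^j
\]
is divisible by $p$. Each of the two sums lies in $[s, sn]$, so $|D_j| \leq sn - s < sn < p$. It follows that $D_j = 0$. Since this holds for every coordinate, $a_1+\dots+a_s = b_1+\dots+b_s$ in $\bZ^d$, which is exactly the Freiman $s$-homomorphism property of $\rho^{-1}$ on the image.

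There is no real obstacle. The only point needing care is to choose $p$ strictly larger than the maximal possible $|D_j|$, which is why we take $p > sn$. That $p$ can be taken prime and at most $Cn$ follows from Bertrand's postulate, or from the prime number theorem if a better constant is wanted.
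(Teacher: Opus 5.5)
Your proof is correct and matches the paper's argument: a Bertrand prime $p$ of size about $sn$ to $2sn$, linearity for the forward direction, and the coordinatewise bound $|D_j| < sn \le p$ forcing $D_j = 0$ for the inverse. Your explicit check that $\rho$ is injective on $[n]^d$, which is needed for $\rho$ to be a bijection onto its image, is a small detail the paper leaves implicit.
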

\begin{proof}
        Using Bertrand's Postulate, we find a prime $s n \leq p \leq 2s n$. We
        claim that $\rho$ is then a Freiman $s$-isomorphism. Since the projection
        is linear, it is a Freiman homomorphism of any order. For the reverse
        direction, suppose that
        \[
                a_1 + \dots + a_s \equiv b_1 + \dots + b_s \mod p
        \]
        for $a_1, \dots, a_s, b_1, \dots, b_s \in [n]^d$. Componentwise, this
        means that $p \mid a_1^i + \dots + a_s^i - b_1^i - \dots - b_s^i$ for $1
        \leq i \leq d$. Since $|a_1^i + \dots + a_s^i - b_1^i - \dots - b_s^i| <
        ns$ and $p \geq ns$, this means that $a_1^i + \dots + a_s^i - b_1^i -
        \dots - b_s^i = 0$, so
        \[
                a_1 + \dots + a_s = b_1 + \dots + b_s,
        \]
        which concludes the proof.
\end{proof}

We may now prove our supersaturation result using these two lemmas.
\begin{proposition}
        \label{prop:supersaturation_alln}
        Let $n$ be an integer and $\lambda > 0$ satisfying $n \gg \lambda \gg
        1$. Suppose that
        \begin{equation}
                \label{eq:behrend_alln}
                        r_A(n) \geq n^d\exp(-O(\log(n)^{\alpha}))
        \end{equation}
                for some $0 < \alpha < 1$.
        Given $B \subset [n]^d$ a subset of
        size $|B| = \lambda r_A(n)$, there is no cover $\cG$ of $\cS_A[B]$
        satisfying
        \[
                w_q(\cG) \leq q|B|
        \]
        for $q = 1/\lambda^3$ and $|E| \geq 2$ for all $E \in \cG$.
\end{proposition}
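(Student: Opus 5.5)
The plan is to argue by contradiction, as in Proposition~\ref{prop:supersaturation_seq}. Suppose such a cover $\cG$ exists; we may assume it is $k$-bounded. We will find, by random sampling, a member of $\cS_A[B]$ that is not covered by $\cG$.

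\textbf{Transfer to $\bZ/p\bZ$.} Prime common differences cost a $\log n$ factor, which we cannot afford here. Instead, apply Lemma~\ref{lemma:freiman_mod} with the $s$ from Lemma~\ref{lemma:freiman_afree}. This gives a prime $p\le Cn$ such that $\rho\colon [n]^d\to(\bZ/p\bZ)^d$ is a Freiman $s$-isomorphism onto its image. Hence members of $\cS_A$ inside $\rho(B)$ correspond exactly to members of $\cS_A[B]$, and $\rho(\cG)$ covers $\cS_A[\rho(B)]$ with the same weights.

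\textbf{Random affine grid.} Fix $m=n/L$ with $L$ to be chosen. Pick $\mathbf a\in(\bZ/p\bZ)^\ast$ and $\mathbf b\in(\bZ/p\bZ)^d$ uniformly and independently, and let $\psi(v)=\mathbf b+\mathbf a v$ for $v\in[m]^d$. Since $m<p$, $\psi$ is injective, and it is linear, so it maps solutions of $A\cdot x=0$ in $[m]^d$ to solutions in $(\bZ/p\bZ)^d$ and preserves non-triviality. Therefore, if $\mathbf Z=\psi^{-1}(\rho(B))$ minus one vertex from each pulled-back edge of $\rho(\cG)$ has size $>r_A(m)$, it contains a member of $\cS_A$. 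Its image under $\rho^{-1}\circ\psi$ is then an uncovered member of $\cS_A[B]$, which is the desired contradiction.

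\textbf{First moment.} For the probabilities we have $\Pr(x\in\Ima\psi)=(m/p)^d$. For distinct $x,y$, the vector $y-x$ must equal $\mathbf a(v'-v)$ with $v'-v\in[-m,m]^d$. Looking at one nonzero coordinate of $y-x$, at most $2m+1$ values of $\mathbf a$ are admissible, so $\Pr(x,y\in\Ima\psi)\lesssim(m/p)^{d+1}$. From $w_q(\cG)\le q|B|$, the number of edges of size $j$ is at most $q^{1-j}|B|$. Hence
\[
\E|\cG\cap 2^{\mathbf Z}|\lesssim q^{-(k-1)}|B|(m/p)^{d+1}\le \tfrac12|B|(m/p)^d,
\]
provided $L\ge\lambda^{3k}$, since $q=\lambda^{-3}$ and $p\le Cn$. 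So some outcome yields $|Z'|\ge\tfrac12|B|(m/p)^d\gtrsim\lambda\, r_A(n)(m/n)^d$.

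\textbf{Comparing with $r_A(m)$.} This is the key step, and it is where Lemma~\ref{lemma:lower_multiplicative} and \eqref{eq:behrend_alln} enter. We have
\[
r_A(n)(m/n)^d\gtrsim r_A(m)\,\frac{r_A(L)}{L^d}\ge r_A(m)\exp\!\bigl(-O((3k\log\lambda)^{\alpha})\bigr).
\]
Thus $|Z'|\gtrsim \lambda\exp(-O(\log(\lambda)^{\alpha}))\,r_A(m)>r_A(m)$, because $\alpha<1$ and $\lambda\gg1$. The assumption $n\gg\lambda$ guarantees $m=n/\lambda^{3k}$ is a large integer (after rounding). The main obstacle is balancing the choice of $L$: it must be large enough to kill the covered edges through the pair probability, yet small enough that the Behrend-type loss $r_A(L)/L^d$ is beaten by the surplus factor $\lambda$. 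This is exactly where $\alpha<1$ is needed.
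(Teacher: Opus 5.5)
Your proposal is correct and follows essentially the same route as the paper's proof. You transfer to $(\bZ/p\bZ)^d$ via Lemma~\ref{lemma:freiman_mod}, intersect with a random dilated grid $\mathbf b+\mathbf a\cdot[m]^d$ whose pair-inclusion probability is $\lesssim (m/p)^{d+1}$, delete one vertex per covered edge, and beat $r_A(m)$ using Lemma~\ref{lemma:lower_multiplicative} with the Behrend-type bound at scale $\lambda^{O(k)}$. The differences are cosmetic: you take $n/m=\lambda^{3k}$ where the paper takes $p/\ell=\lambda^{4k}$, and your pair count is slightly cruder.
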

\begin{proof}
        Suppose for contradiction there exists such a cover $\cG$, which we may
        assume to be a subfamily of $2^{B}$ and $k$-bounded. In a similar
        manner to the proof of Proposition \ref{prop:supersaturation_seq}, we
        will restrict $B$ to a random arithmetic progression and show that
        this intersection contains members of $\cS_A$ that are not covered by $\cG$.

        As we mentioned in the discussion above, it is convenient to do this in
        the modular setting. Let $s > 0$ be the integer provided by Lemma
        \ref{lemma:freiman_afree}, and $p$ the prime given by Lemma
        \ref{lemma:freiman_mod} so that the projection $\rho \colon [n]^d \to
        (\bZ/p\bZ)^d$ is a Freiman $s$-isomorphism when restricted to  its image. 
        We write $\tilde{B} = \rho(B)$
        for the projection of $B$ in $(\bZ/p\bZ)^d$. We also write $\tilde{\cG} =
        \rho(\cG)$ for the projection of the cover $\cG$.  Observe that,
        since $\rho$ is a bijection between $B$ and $\tilde{B}$, the collection
        $\tilde{\cG}$ has weight
        $w_q(\tilde{\cG}) = w_q(\cG)$.  We claim that $\tilde{\cG}$ is  a cover of  $\tilde{\cS}_A[\tilde{B}]
        \coloneqq
        \rho(\cS_A)[\tilde{B}]$. Indeed, given an element $e\in \tilde{\cS}_A[\tilde{B}]$, we have that $\rho^{-1}(e)\in \cS_A[B]$ on account of $\rho$ being a Freiman
        $s$-isomorphism for $[n]^d$ and Lemma \ref{lemma:freiman_afree}. Therefore, there exists some $g\in \cG$ with $g\subseteq \rho^{-1}(e)$ and so $e\supseteq \rho(g)\in \tilde{\cG}$. 
       Hence  it suffices to find a member
        of $\tilde{\cS}_A[\tilde{B}]$ uncovered by $\tilde{\cG}$ to reach a contradiction.

        Let $\mathbf{L}$ be a grid in $(\bZ/p\bZ)^d$ chosen uniformly at random
        among those of length $\ell = p/\lambda^{4k}$. In more formal terms, let
        \begin{equation}
                \label{eq:random_box}
                \mathbf{L} = \mathbf{b} + \mathbf{r} \cdot [\ell]^d,
        \end{equation}
        where the basepoint $\mathbf{b}$ is chosen uniformly at random in
        $(\bZ/p\bZ)^d$ and $\mathbf{r}$ is a scalar taken uniformly at random in
        $(\bZ/p\bZ)^*$.

        We will need to
        bound the number of possible outcomes of $\mathbf{b}$ and $\mathbf{r}$ that
        give $\mathbf{L}$ containing  
        two different points $x, y \in (\bZ/p\bZ)^d$. Without loss
        of generality, suppose that $x^1 \neq y^1$.
        If $x^1 = b^1 + r i$ for some
        $b^1, r \in \bZ/p\bZ$ and $i \in [\ell]$ and $y^1 = b^1 + r
        j$ for some $j \in [\ell]$ with $j \neq i$, then $r =
        (x^1-y^1)/(i-j)$ (here we use that $p$ is prime), so the value of
        $\mathbf{r}$ is determined by the relative position of the first
        coordinate of $x$ and $y$ in $\mathbf{L}$.
        Once we know $\mathbf{r}$, the value of $\mathbf{b}$ is determined by the
        relative position of $x$ in $\mathbf{L}$. There are a total of $\ell^d$
        choices for the relative position of $x$ and $\ell-1$ choices left for the
        relative position of the first coordinate of $y$, so
        there are at most $\ell^{d}(\ell-1)$ possible outcomes of $\mathbf{b}$ and
        $\mathbf{r}$ such that $\mathbf{L}$ contains $x,y$.
        On the other hand, there are a total of $p^{d}(p-1)$
        possible outcomes of $\mathbf{b}$ and $\mathbf{r}$. This tells us that
        \[
                \Pr(x, y \in \mathbf{L}) \leq \frac{\ell^d(\ell-1)}{p^d(p-1)} \leq
                \frac{\ell^{d+1}}{p^{d+1}},
        \]
        from which we deduce that
        \[
                \E\left[\left|\tilde{\cG} \cap 2^{\mathbf{L}}\right|\right] = \sum_{E
                \in \tilde{\cG}} \Pr(E \in \mathbf{L}) \leq \sum_{2 \leq i \leq
        k} \frac{w_q(\tilde{\cG})}{q^i} \left(\frac{\ell}{p}\right)^{d+1} \lesssim
q^{-(k-1)} |\tilde{B}| \left(\frac{\ell}{p}\right)^{d+1},
        \]
        where we used that $|E| \geq 2$ and that $w_q(\tilde{\cG}) \leq q|\tilde{B}|$.
        Plugging in $q = \lambda^{-3}$, $|\tilde{B}| = |B| = \lambda r_A(n)$, and $\ell
        =p/\lambda^{4k}$, we obtain that
        \begin{equation}
                \label{eq:upper_exp}
                \E\left[\left|\tilde{\cG} \cap 2^{\mathbf{L}}\right|\right] \lesssim
                \lambda^{-k} r_A(n)
        \left(\frac{\ell}{p}\right)^{d}.
        \end{equation}

        On the other hand, there are $\ell^{d}(p-1)$ values of $\mathbf{b}$ and
        $\mathbf{r}$ such that $\mathbf{L}$ contains a given
        $x \in (\bZ/p\bZ)^d$, as may be seen by choosing the relative position of $x$ in the
        grid and the value of $\mathbf{r}$. Therefore,
        \[
                \Pr(x \in \mathbf{L}) = \frac{\ell^d (p-1)}{p^d(p-1)} =
                \left(\frac{\ell}{p}\right)^d
        \]
        for every $x \in (\bZ/p\bZ)^d$. From this and $\eqref{eq:upper_exp}$, it
        follows that
        \[
                \E\left[\left|\tilde{B}\cap \mathbf{L}\right| - \left|\tilde{\cG}
                        \cap 2^{\mathbf{L}}\right|\right] 
                \gtrsim \left(\frac{\ell}{p}\right)^d |\tilde{B}| -
                C\lambda^{-k} r_A(n)
\left(\frac{\ell}{p}\right)^d,
        \]
        for some constant $C >0$ depending on $k$ and $d$. If $\lambda$ is large enough, since
        $|\tilde{B}|= \lambda r_A(n)$, we obtain
        \[
                \E\left[\left|\tilde{B}\cap \mathbf{L}\right| - \left|\cG \cap
                        2^{\mathbf{L}}\right|\right] \gtrsim
                \left(\frac{\ell}{p}\right)^d|\tilde{B}|.
        \]

        Hence, by a first moment argument there exists some grid $L$ of
        length $\ell$ that satisfies the previous lower bound. For every
        term of $\tilde{\cG} \cap 2^{L}$, we select an element and remove it from
        $\tilde{B}$ to obtain a new set $\tilde{B}' \subset L$ satisfying $|\tilde{B}'|
        \gtrsim (\ell/p)^d |\tilde{B}|$ and $\tilde{\cG} \cap 2^{\tilde{B}'} =
        \varnothing$, that is, such that $\tilde{\cG}$ covers no subset of
        $\tilde{B}'$. However, using \eqref{eq:lower_multiplicative} and
        \eqref{eq:behrend_alln} and the fact that $n \leq p \lesssim n$, we also have that
        \[
                |\tilde{B}'| \gtrsim \left(\frac{\ell}{n}\right)^d \lambda r_A(n)
                \gtrsim \left(\frac{\ell}{n}\right)^d \lambda
                r_A\left(\frac{n}{\ell}\right) r_A(\ell) \gtrsim \lambda
                e^{-O(\log(n/\ell)^{\alpha})} r_A(\ell),
        \]
        for some $0 < \alpha < 1$ and $n/\ell$ large
        enough, which follows from $\lambda \gg 1$. Plugging in $n/\ell \lesssim
        \lambda^{4k}$ and taking $\lambda$ large enough in the previous
        inequality guarantees that
        \[
                |\tilde{B}'| > r_A(\ell).
        \]

        Recalling the definition of $\mathbf{L}$ in \eqref{eq:random_box}, there
        exist $b \in (\bZ/p\bZ)^d$ and $r \in (\bZ/p\bZ)^*$ such that $L = b + r \cdot [\ell]^d$. We
        unfold the $\ell$-AP via the map
        \begin{align*}
                \varphi\colon L &\to [\ell]^d\\
                b + (r x^1, \dots, r x^d) &\mapsto (x^1, \dots, x^d).
        \end{align*}
        Since $\bZ/p\bZ$ is a field and $r \neq 0$, the map $\varphi$ is
        injective and preserves $|\varphi(\tilde{B}')| =
        |\tilde{B}'| > r_A(\ell)$, so there exist $x_1, \dots, x_k \in
        \varphi(\tilde{B}')$
        not all equal
        solving $A \cdot \vect(x_1, \dots, x_k) = 0$. Given that
        $\varphi^{-1}$ is
        an affine map, and the fact that the system associated to $A$ is invariant
        by translations and also by dilations on account of being homogeneous,
        we also have that $A \cdot \vect(\varphi^{-1}(x_1), \dots,
        \varphi^{-1}(x_k)) = 0$. In other words, $\{\varphi^{-1}(x_1), \dots,
        \varphi^{-1}(x_k)\} \in \tilde{\cS}_A[\tilde{B}]$ and is not covered by
        $\tilde{\cG}$, because $\tilde{B}'$ contains no member of $\tilde{\cG}$.
        This is a contradiction with $\tilde{\cG}$ being a cover of
        $\tilde{\cS}_A[\tilde{B}]$, and finishes our proof.
\end{proof}

We are now ready to prove our main goal, namely the following theorem that provides a
family of containers for $A$-free sets. We use an argument of Morris and
Saxton \cite{2016.morris.saxton} that iteratively applies Theorem
\ref{theo:cs_containers}, successively refining containers, until we obtain a
family of them of small enough size.
\begin{theorem}
        \label{theo:alln_containers}
        Provided $r_A(n)$ satisfies \eqref{eq:behrend_alln}, and given an
        integer $n \gg 1$, there exists a family of \emph{containers} $\cC =
        \cC(n, k) \subset 2^{[n]^d}$ satisfying:
        \begin{enumerate}[label=(\roman*)]
                \item The bound
                        \[
                                \log(|\cC|) \lesssim r_A(n)
                        \]
                        holds.
                \item Every container $C \in \cC$ satisfies
                        \[
                                |C| \lesssim r_A(n).
                        \]
                \item Every $A$-free set in $[n]^d$ is a subset of some
                        container $C \in \cC$.
        \end{enumerate}
\end{theorem}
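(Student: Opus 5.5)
We iterate Theorem \ref{theo:cs_containers}, using Proposition \ref{prop:supersaturation_alln} as the supersaturation input, in the style of Morris and Saxton. We build the family in rounds, starting from $\cC_0 = \{[n]^d\}$. In each round, every current container $C$ with $|C| = \lambda r_A(n)$ and $\lambda$ above a large absolute constant $\lambda_0$ is refined. To refine $C$, apply Theorem \ref{theo:cs_containers} to the hypergraph $\cS_A[C]$. This hypergraph is $k$-bounded with all edges of size at least $2$, because trivial solutions are excluded. We use the parameter $q = \lambda^{-3}/(8k^2)$, or anything of order $\lambda^{-3}$ with a constant that makes it admissible.

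Each new container $C' = f(T)$ comes with a cover $\cG$ of $\cS_A[C']$ satisfying $w_q(\cG) \leq q|C'|$. Write $|C'| = \lambda' r_A(n)$. If $\lambda' \geq \lambda$, then $q \leq (\lambda')^{-3}$ up to the constant. Proposition \ref{prop:supersaturation_alln}, applied with $\lambda'$ (its proof tolerates any $q \lesssim \lambda'^{-3}$, and we may rescale constants), says no such cheap cover exists. Hence $|C'| \leq \lambda r_A(n)$. To force genuine shrinkage, say $|C'| \leq \lambda r_A(n)/2$, I would instead apply the proposition to $\lambda/2$ with $q = (\lambda/2)^{-3}$ up to constants. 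Then every child container has size at most $|C|/2$. Containers with $\lambda \leq \lambda_0$ are frozen. Every $A$-free set in $C$ is independent in $\cS_A[C]$, so property (iii) propagates from round to round.

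Next we count. In a round where the parent has $\lambda \in [2^j, 2^{j+1}]$, the fingerprints have size at most $8k^2 q|C| \lesssim \lambda^{-3}\cdot \lambda r_A(n) = \lambda^{-2} r_A(n)$. They are subsets of $C \subset [n]^d$, so the number of children is at most
\[
\binom{|C|}{\lesssim \lambda^{-2} r_A(n)} \leq \exp\bigl(O(\lambda^{-2}\log(\lambda)\, r_A(n))\bigr).
\]
It matters that we choose the fingerprint inside $C$ rather than $[n]^d$, so the binomial is $\binom{\lambda r_A}{\lambda^{-2} r_A}$ and the logarithm factor is $\log(e\lambda^3)$, not $\log n$. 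The initial container has $\lambda_{\max} = n^d/r_A(n) \leq \exp(O(\log(n)^\alpha))$. Along any chain, $\lambda$ at least halves each round. So the total log-count is at most
\[
\sum_{j \geq \log_2 \lambda_0} O\bigl(2^{-2j} j\bigr) r_A(n) = O(r_A(n)),
\]
a convergent geometric sum. This gives (i), and the frozen containers give (ii).

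The main obstacle is checking that Proposition \ref{prop:supersaturation_alln} applies at every scale, since it requires $n \gg \lambda$. Here $\lambda$ ranges up to $n^d/r_A(n) = e^{O(\log(n)^\alpha)}$, which is subpolynomial because $\alpha < 1$. Inside the proposition, $\ell = p/\lambda^{4k}$ must be large and the bound $e^{-O(\log(n/\ell)^\alpha)}\lambda \gg 1$ must hold. Both are fine, since $\log(\lambda^{4k})^\alpha = o(\log\lambda)$ as $\lambda \to \infty$ and $\ell \geq n^{1-o(1)}$. I would need to verify that the hidden constants in the proposition are uniform in $\lambda$ over this whole range. Otherwise I would restate the proposition with $\lambda$ allowed up to $e^{c\log(n)^{\beta}}$ for some $\alpha < \beta < 1$. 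A secondary concern is the first round at $\lambda = \lambda_{\max}$, where $q$ must satisfy $q \leq 1/(8k^2)$. This holds trivially.
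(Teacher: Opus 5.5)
Your proposal is correct and is essentially the paper's own argument. The paper also iterates Theorem~\ref{theo:cs_containers} on $\cH[C]$ with $p=\lambda_{i+1}^{-3}$, where its fixed dyadic schedule $\lambda_i = n^d/(r_A(n)2^{i-1})$ plays the role of your per-container scales; it then uses Proposition~\ref{prop:supersaturation_alln} to halve container sizes at each step, counts fingerprints as subsets of $C$, and sums the resulting geometric series.

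One correction: the monotonicity runs the opposite way to what you wrote. Since every edge has size at least $2$, a cover with $w_q(\cG)\le q|C'|$ yields one with $w_{q'}(\cG)\le q'|C'|$ for every $q'\le q$. So the proposition rules out cheap covers for all $q\gtrsim(\lambda')^{-3}$, not $q\lesssim(\lambda')^{-3}$, and this is exactly what gives $|C'|\le\lambda r_A(n)/2$. Your check that the proposition holds uniformly for $\lambda$ up to $n^d/r_A(n)=e^{O(\log(n)^{\alpha})}$ is correct, and it makes explicit a point the paper leaves implicit.
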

\begin{proof}
        We will apply the container theorem to the hypergraph $\cH \coloneq \cS_A[[n]^d]$.
        Notice that a set in $[n]^d$ is
        $A$-free if and only if it is an independent set in $\cH$.
        We will build $\cC$ iteratively, successively applying the container
        lemma to obtain smaller containers at every step. More concretely,
        letting $\lambda$ be a constant large enough so that Proposition
        \ref{prop:supersaturation_alln} holds, we will find a sequence $\cC_1,
        \dots, \cC_t$ of families  of subsets of $[n]^d$, of length $t =
        \log_{2}\left(n^d/\lambda r_A(n)\right)$, satisfying the following
        properties:
        \begin{enumerate}[label=(\alph*)]
                \item For all $1 \leq i \leq t$ and $C \in \cC_{i}$, it holds that
                        \[
                                |C| \leq \lambda_i r_A(n),
                        \]
                        with $\lambda_i = \frac{n^d}{r_A(n)2^{i-1}}$ a dyadic
                        scale between $\frac{n^d}{r_A(n)}$ and $2\lambda$.
                        \label{item:condition_edges}
                \item For all $1 \leq i \leq t-1$,
                        \[
                                \log\left(\frac{|\cC_{i+1}|}{|\cC_{i}|}\right)
                                \lesssim \frac{r_A(n)}{\lambda_i}.
                        \]
                        \label{item:condition_containers}
                \item For all $1 \leq i \leq t$, every $A$-free set in
                        $[n]^d$ is a subset of some $C \in \cC_i$.
                        \label{item:condition_independent}
        \end{enumerate}

        Let us prove the existence of such a sequence.
        Set $\cC_1$ to contain only the whole set $[n]^d$, which satisfies
        conditions \ref{item:condition_edges} and
        \ref{item:condition_independent}. Assume
        conditions \ref{item:condition_edges} and
        \ref{item:condition_independent} hold for $\cC_i$, and let us define
        $\cC_{i+1}$. For every $C \in \cC_{i}$, we have two possibilities:
        \begin{itemize}
                \item If $|C| \leq \lambda_i r_A(n)/2  \leq \lambda_{i+1} r_A(n)$, we add it
        directly to $\cC_{i+1}$. 
\item If $|C| \geq \lambda_i r_A(n)/2 \geq \lambda_{i+1} r_A(n)$, apply Theorem
        \ref{theo:cs_containers} to the hypergraph $\cH[C]$ with $p = 1/\lambda_{i+1}^3$.
        We obtain a family of fingerprints $\cT \subseteq 2^{C}$ and a function
        $f$ satisfying the properties of the theorem. For every $T \in \cT$, add
        $f(T)$ to $\cC_{i+1}$. On account of property
        \ref{item:fingerprint_size} of Theorem \ref{theo:cs_containers} and
        writing $s=8k^2p|C|$, doing this we add at most
        \begin{equation}
                \label{eq:bound_ft}
                |\cT| \leq \sum_{j = 0}^{s} \binom{|C|}{j} \leq s \binom{|C|}{s}
                \leq s \left(\frac{|C|e}{s}\right)^s \lesssim p|C|
                p^{-s} \leq \lambda_{i+1}^{25k^2r_A(n)/\lambda_i^2}
        \end{equation}
        elements to $\cC_{i+1}$, where we used that $|C| \leq \lambda_i r_A(n)$.
        \end{itemize}
        
        For any element $f(T)$ added to
        $\cC_{i+1}$ in the second case, the choice of $p = 1/\lambda_{i+1}^3$ and
        property \ref{item:containers_cover} of Theorem \ref{theo:cs_containers}
        imply that
        \[
                |f(T)|  \leq \lambda_{i+1} r_A(n),
        \]
        since otherwise it would contradict Proposition
        \ref{prop:supersaturation_alln}.
        Finally, applying \eqref{eq:bound_ft} we have
        that
        \[
                \log\left(\frac{|\cC_{i+1}|}{|\cC_{i}|}\right) \lesssim \frac{r_A(n)}{\lambda_{i}}
        \]
        for large enough $\lambda_{i+1} \geq \lambda$, 
        so condition \ref{item:condition_containers} holds for $\cC_i$.

        We claim we may take $\cC$ in the theorem statement to be $\cC_t$. Let us verify it
        satisfies all the required properties. Condition
        \ref{item:condition_edges} on the sequence $(\cC_i)$ implies that $|C|
        \leq 2\lambda r_A(n)$ for
        every $C \in \cC$, and condition \ref{item:condition_independent} implies that
        every $A$-free subset of $[n]^d$ is a subset of some $C \in \cC$. As
        for the bound on $|\cC|$, iterating condition \ref{item:condition_containers} we have that
        \[
                \log(|\cC|) = \log(|\cC_1|) +\sum_{i = 1}^{t-1}
                \log\left(\frac{|\cC_{i+1}|}{|\cC_i|}\right) \lesssim \sum_{i = 1}^{t-1}
                \frac{r_A(n)}{\lambda_i} \lesssim \sum_{j = 0}^{t-1}
                \frac{r_A(n)}{2^{j}\lambda} \lesssim
                r_A(n),
        \]
        where we used that $|\cC_1| = 1$. This proves the desired conclusion.
\end{proof}

The main goal of this section is a straightforward consequence of the existence
of such a container family.
\begin{proof}[Proof of Theorem \ref{theo:main_alln}]
        For large enough $n > 0$, let $\cC$ be the family of containers $\cC$
        given by Theorem \ref{theo:alln_containers}. Since every $A$-free set in
        $[n]^d$ is a subset of some $C \in \cC$ of size at most $|C| \lesssim
        r_A(n)$, the number of $A$-free sets in $[n]^d$ is bounded by
        \[
                \sum_{C \in \cC} 2^{|C|} \leq |\cC| \cdot 2^{O(r_A(n))} = 2^{O(r_A(n))},
        \]
        where we used the bound on $|\cC|$ provided by Theorem
        \ref{theo:alln_containers}.
\end{proof}

\section{Applications}
\label{sec:applications}

In order to apply our main theorems to a certain family, we must show that it
may be described as the family of $A$-free sets for a certain
translation-invariant matrix $A \in
\bZ^{m\times kd}$, and exhibit
Behrend-type lower bounds for the extremal threshold. Let us first do so for the
family of $X$-free sets in $\bZ^d$.
\begin{lemma}
        \label{lemma:system_copiesx}
        Given integers $k, d > 0$ and a set $X \subset \bZ^d$ of size $|X|=k$, there exist $m 
> 0$ and a translation-invariant matrix $A \in \bZ^{m
        \times kd}$ such that any subset
        of $\bZ^d$ is $X$-free if and only if it is $A$-free.
\end{lemma}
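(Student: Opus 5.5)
Enumerate $X = \{v_1, \dots, v_k\}$. The plan is to write down linear equations whose solutions $(x_1,\dots,x_k)$ are exactly the tuples of the form $x_i = b + \lambda v_i$ with $b \in \bR^d$, $\lambda \in \bR$. Then read off that a non-trivial solution corresponds to a non-trivial copy of $X$. We may assume $k \geq 2$; for $k=1$ both notions are vacuous, since every copy is trivial and $\cS_A$ is empty.

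\textbf{Case $v_1 = \dots = v_k$ impossible.} Since $|X|=k\geq 2$, the points are distinct, and we may take $v_1 \neq v_2$.

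\textbf{Choice of equations.} Translating, a tuple is a copy iff $x_i - x_1 = \lambda(v_i - v_1)$ for all $i$ and a common $\lambda \in \bR$. Fix a coordinate $j_0$ with $v_2^{j_0} \neq v_1^{j_0}$, and set $w_i = v_i - v_1 \in \bZ^d$ and $D = w_2^{j_0} \neq 0$. The condition then becomes the following system of integer linear equations, for all $2 \leq i \leq k$ and $1 \leq j \leq d$:
\[
        D\,(x_i^j - x_1^j) - w_i^j\,(x_2^{j_0} - x_1^{j_0}) = 0 .
\]
This amounts to eliminating $\lambda = (x_2^{j_0}-x_1^{j_0})/D$. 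Let $A$ have these rows, with the coefficients placed at the positions of $x_i^j$, $x_1^j$, $x_2^{j_0}$, $x_1^{j_0}$.

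\textbf{Translation-invariance.} For each row and each component $j'$, the coefficients of the variables' $j'$-th components sum to zero. Each of the two terms $D(x_i^j - x_1^j)$ and $w_i^j(x_2^{j_0}-x_1^{j_0})$ is a difference of two variables in the same component, so this holds.

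\textbf{Equivalence of solution sets.} If $x_i = b + \lambda v_i$, every equation holds. Conversely, given a solution, set $\lambda = (x_2^{j_0}-x_1^{j_0})/D$ and $b = x_1 - \lambda v_1$; the equations give $x_i = b + \lambda v_i$. Moreover $\lambda = 0$ iff all $x_i$ are equal: if all $x_i$ are equal, the definition of $\lambda$ gives $0$, and conversely $\lambda=0$ forces $x_i = x_1$. So for $B \subset \bZ^d$, tuples in $B$ solving the system with not all $x_i$ equal are exactly the non-trivial copies of $X$ in $B$. Hence $B$ is $A$-free iff it is $X$-free.

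The only delicate point is the unordered-set convention in $\cS_A$: a member of $\cS_A$ is the set $\{x_1,\dots,x_k\}$ of a non-trivial solution. This is harmless. A non-trivial copy has $\lambda \neq 0$, so its points are distinct and the set is the copy itself. In the other direction, any member of $\cS_A$ inside $B$ yields a non-trivial copy inside $B$, so nothing further is needed.
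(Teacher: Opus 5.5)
Your proof is correct and is essentially the paper's argument. Both eliminate the dilation parameter $\lambda$ through cross-multiplied proportionality relations, and your rows $D(x_i^j-x_1^j)=w_i^j(x_2^{j_0}-x_1^{j_0})$ are exactly the paper's equations with the pivot index fixed to $(2,j_0)$. The only difference is that you fix the pivot on the pattern side, where $w_2^{j_0}\neq 0$ is guaranteed, so you keep just $(k-1)d$ rows and read off $\lambda=(x_2^{j_0}-x_1^{j_0})/D$ directly; the paper instead uses all $(d(k-1))^2$ rows and picks a coordinate where the solution is non-constant, then checks that the corresponding pattern entry is non-zero.
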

\begin{proof}
        By invariance under translations, we may assume that $X$ contains the point $0
        = (0, \dots, 0) \in \bZ^d$. Write $x_2, \dots, x_{k} \in \bZ^d$ for the
        remaining points of $X = \{0, x_2, \dots, x_{k}\}$, and $(x_i^1,
        \dots, x_i^d) = x_i$ for their respective coordinates. 
        If $b_1, \dots, b_k \in \bZ^d$ form a copy of
        $X$, they may be written as
        \[
                (b_1, \dots, b_k) = (p, \dots, p) + \lambda \cdot (0, x_2,
                \dots, x_k), 
        \]
        for $p \in \bR^d$ and $\lambda \in \bR_{>0}$. Subtracting the
        corresponding equations and equating them coordinate-wise, it follows
        that
        \begin{equation}
                \label{eq:x_copy_b_coordinates}
                (b_i^s -b_1^s)x_j^t = (b_j^t - b_1^t)x_i^s,
        \end{equation}
        for all $2 \leq i, j \leq k$ and $1 \leq s, t \leq d$.

        We claim that the converse also holds, meaning that any $b_1, \dots,
        b_k \in \bZ^d$ satisfying \eqref{eq:x_copy_b_coordinates} 
        form a copy of $X$. 
        Indeed, 
        assume there exists $i$ such that $b_i \neq b_1$, since otherwise $b_1
        = \dots = b_k$ and they form a trivial copy of $X$.
        Let $s$ be a
        coordinate such that $b_i^s \neq b_1^s$. If $x_i^s = 0$, we deduce from
        \eqref{eq:x_copy_b_coordinates} that $x_j = 0$ for $j \not \in \{1, i\}$,
        a contradiction. Hence, we may divide by $x_i^s \neq 0$ in
        \eqref{eq:x_copy_b_coordinates} and obtain that
        \[
                (b_1,  \dots,  b_1) + \frac{b_i^s-b_1^s}{x_i^s} (0, x_2, \dots,
                x_k) = (b_1, b_2, \dots, b_k)
        \]
        on account of \eqref{eq:x_copy_b_coordinates}. This precisely means that
        $(b_1, \dots, b_k)$ form a copy of $X$. Therefore, the lemma follows
        by setting $A$ to be the matrix with $m=(d(k-1))^2$
        rows corresponding to every equation of the form
        $\eqref{eq:x_copy_b_coordinates}$, that is, with coefficients
        $x_j^t, -x_j^t, -x_i^s$ and $x_i^s$ in the columns corresponding to $b_i^s, b_1^s, b_j^t$ and $b_1^t$ respectively. Notice that \eqref{eq:x_copy_b_coordinates} is translation-invariant, because the coefficients in component $s$ are $x_j^t$ and $-x_j^t$, which add up to zero, and the same is true for component $t$. Hence, the matrix $A$ is also translation-invariant.
\end{proof}

We also need Behrend-type lower bounds for the multidimensional Szemerédi
problem. Since these are standard but it is not easy to find an appropriate
reference, we prove them in Appendix \ref{app:hypothesis_checking}, and simply
state them for now.
\begin{proposition}
        \label{prop:rankin_szemeredi}
        Given a set $X \subset \bZ^d$ of size $|X| \geq 2^{\ell}+1$, there exists a
        constant $C > 0$ such that
        \[
                r_X(n) \geq n^d \exp\left(-C\log(n)^{1/(\ell+1)}\right).
        \]
\end{proposition}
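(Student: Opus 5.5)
Our plan is to reduce the statement to Rankin's one-dimensional construction and then lift it to $\bZ^d$ with a projection argument. Write $k=|X|\geq 2^\ell+1$. Rankin's bound \eqref{eq:rankin_bound} gives a set $S\subset[N]$ with no non-trivial $k'$-AP, where $k'$ is any integer with $\lceil\log_2 k'\rceil=\ell+1$. It also gives
\[
|S|\geq N\exp\left(-C\log(N)^{1/(\ell+1)}\right).
\]
In fact Rankin's construction is stronger: it produces sets avoiding non-trivial solutions to all equations of the shape $\sum_i c_i y_i=0$ with $\sum c_i=0$ and bounded coefficients, whenever the number of terms is at most $2^{\ell+1}$ (more precisely, sets in which a bounded-coefficient linear relation forces the points to coincide on a sphere of a high-degree norm). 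We will use this stronger form.

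To build the set in $[n]^d$, we first choose a generic integer linear functional $\pi(y)=\sum_j w_j y^j$ with bounded weights. We pick $w$ so that the values $\pi(x)$, $x\in X$, are pairwise distinct. Then $\pi(X)=\{t_1,\dots,t_k\}\subset\bZ$ is a set of $k$ distinct integers, and any copy $b+\lambda X$ of $X$ is mapped by $\pi$ to a copy $\pi(b)+\lambda\,\pi(X)$ of $\pi(X)$, non-trivial whenever $\lambda\neq0$. We then define
\[
A=\{y\in[n]^d\colon \pi(y)\in S'\},
\]
where $S'$ is a set of integers in an interval of length $O(n)$ that is free of non-trivial copies of $\pi(X)$. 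If $A$ contained a non-trivial copy of $X$, its image would be a non-trivial copy of $\pi(X)$ inside $S'$, which is impossible. For the density, each value $v$ in the middle range of $\pi([n]^d)$ has about $n^{d-1}$ preimages, so $|A|\gtrsim n^{d-1}|S'\cap I|$ for a middle interval $I$ of length $\asymp n$. Here we use translation invariance to place $S'$ inside $I$. This reduces the proposition to the following one-dimensional claim:
\[
r_{\pi(X)}(n)\geq n\exp\left(-C\log(n)^{1/(\ell+1)}\right).
\]

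The one-dimensional claim is that, for any $k$-element set $T=\{t_1<\dots<t_k\}\subset\bZ$ with $k\geq 2^\ell+1$, this bound holds. This is the main obstacle, since $T$ need not be an AP, so we cannot simply cite Rankin for $k$-APs. A copy $b+\lambda T$ with $\lambda>0$ solves every equation $(t_j-t_1)y_i-(t_i-t_1)y_j+(t_i-t_j)y_1=0$. The points $y_1,\dots,y_k$ satisfy all these three-term relations, so any three of them, say $y_1,y_2,y_3$, satisfy a non-trivial translation-invariant equation $a y_1+b y_2+c y_3=0$ with $a+b+c=0$ and bounded coefficients. The first thing we will try is Behrend's argument adapted to this equation. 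We take digits in base $D$ with each digit below $D/(2\max|t_i|)$ so that there is no carry, and restrict to a sphere $\sum x_i^2=R$. Strict convexity of the sphere then rules out a non-trivial solution, and this already yields exponent $1/2$, which covers $\ell=1$.

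For general $\ell$ we will follow Rankin's induction and use more of the $k$ points. Rankin's key step is that a set avoiding non-trivial solutions to equations in $m$ variables, obtained by slicing to a level set of a norm, yields a set avoiding non-trivial solutions to equations in $2m$ variables with one fewer logarithmic root. So we would follow Rankin's proof verbatim, replacing the AP relations by the linear relations among $y_1,\dots,y_k$ with coefficients coming from $T$, and using $k\geq 2^\ell+1$ points exactly as Rankin uses $2^\ell+1$-term progressions. The delicate point we expect is checking that, in the inductive step, the derived equations for $T$ keep bounded coefficients and still force triviality. The no-carry digit encoding, via Lemma \ref{lemma:freiman_product} and Corollary \ref{cor:freiman_product}, then transfers everything back into an interval of length $O(n)$.
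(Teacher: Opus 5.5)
Your reduction from $\bZ^d$ to $\bZ$ is the paper's argument: project along $c$ with $(x_i-x_j)\cdot c\neq 0$ and pull back a translate of a one-dimensional $\pi(X)$-free set. The paper averages over translates $B+s$ rather than counting preimages of middle values, which avoids the $\gcd$ bookkeeping, but both work. Your $\ell=1$ case is also fine once the relation is written with the middle point as a convex combination, $(t_3-t_1)y_2=(t_3-t_2)y_1+(t_2-t_1)y_3$.

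The gap is the case $\ell\geq 2$, which you rightly call the main obstacle but do not resolve. ``Follow Rankin verbatim'' does not say what property is carried through the induction when $T$ is not an AP. Your description of Rankin's step, with equations in $m$ variables becoming equations in $2m$ variables, is also not what happens: the number of points stays $k$ throughout, and what doubles is a \emph{degree}.

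The missing idea is the invariant the paper uses. Call a set $(T,r)$-free if it contains no $\{P(t_1),\dots,P(t_k)\}$ with $P\in\bR[x]^s$ non-constant of degree at most $r$.
\begin{itemize}
\item Base case: a sphere $\{q\in[M]^s\colon \|q\|^2=R\}$ is $(T,2^{\ell-1})$-free. Indeed $\|P\|^2-R$ has degree at most $2^{\ell}<k$ and vanishes on $T$, and a real sum of squares of polynomials is constant only if every summand is.
\item Inductive step: if $R'$ is $(T,2^{j+1})$-free, then $\{q\in[2M]^s\colon \|q\|^2\in z+R'\}$ is $(T,2^{j})$-free. This is because $\|P(t_i)\|^2=(\|P\|^2)(t_i)$ is a polynomial copy of degree $2^{j+1}$ inside $z+R'$.
\end{itemize}
Iterating down to $j=0$ with $s\approx\log(n)^{1/(\ell+1-j)}$ gives the exponent $1/(\ell+1)$. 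Moreover, $(T,r)$-freeness is the same as $A_r$-freeness for the fixed translation-invariant integer matrix $A_r$ whose rows span the left kernel of the Vandermonde matrix $(t_i^{e})_{i\leq k,\,0\leq e\leq r}$. So Corollary \ref{cor:freiman_product} moves these sets from $[M]^s$ into an interval. Your worry about coefficients growing during the induction disappears, since $T$ and $r\leq 2^{\ell-1}$ are fixed.

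Separately, the ``stronger form'' of Rankin's construction that you say you will use is false. No set containing two elements $a\neq b$ avoids non-trivial solutions to all bounded-coefficient translation-invariant equations in at most $2^{\ell+1}\geq 4$ variables: $y_1+y_2=y_3+y_4$ is solved by $(a,b,a,b)$. Even under the usual notion of triviality, sets avoiding this equation have size $O(\sqrt{n})$. Your argument never actually invokes this claim, so it should simply be dropped.
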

The case $\ell=1$ of the previous proposition was proved in \cite{2025.bhmnw}. The
counting versions of the multidimensional Szemerédi theorem are a corollary of
what we have proved so far.
\begin{proof}[Proof of Theorem \ref{theo:multi_sz_infseq} and
        \ref{theo:multi_sz_alln}]
        Let $A \in \bZ^{m \times kd}$ be the system of equations provided by
        Lemma \ref{lemma:system_copiesx}, so that $B \subset \bZ^d$ is $A$-free
        if and only if it is $X$-free. If $|X| \geq 5$, Proposition
        \ref{prop:rankin_szemeredi} gives
        \[
                r_X(n) \geq n^d \exp\left(-C\log(n)^{1/3}\right).
        \]
        By Theorem \ref{theo:inf_seq_bounds}, the number of $X$-free sets in
        $[n]^d$ is at most $2^{r_X(n)(1+o(1))}$. Analogously, applying Proposition
        \ref{prop:rankin_szemeredi} with $|X| \geq 3$ and Theorem
        \ref{theo:main_alln} gives Theorem \ref{theo:multi_sz_alln}.
\end{proof}

For the proofs of Theorem \ref{theo:random_systems_infseq} and
\ref{theo:random_systems_alln} we just need corresponding 
Behrend-type lower bounds, since they already deal with translation-invariant systems of
equations. The crucial input is the following result of Shapira
\cite{2006.shapira}.
\begin{proposition}
        \label{prop:rankin_random_system}
        Let $k, t \geq 3$ and $\ell > 0$ be integers
        satisfying   $\binom{t-2+2^{\ell}}{2^{\ell}} \leq k$.
        There are constants $c_1 = c_1(k)$ and $c_2 = c_2(k, h)$
        such that all but $c_1/h$ of the $(k, h)$-matrices $A \in \bZ^{(k-t+1)
        \times k}$ satisfy
        \[
                r_A(n) \geq n \exp\left(-c_2 \log(n)^{1/(\ell+1)}\right).
        \]
\end{proposition}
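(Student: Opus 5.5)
The plan is to derive this from Shapira's work \cite{2006.shapira} together with a union bound over $(k,h)$-matrices, rather than to build a new construction. Shapira's theorem has a deterministic part and a probabilistic part. The deterministic part says that a translation-invariant system $A \in \bZ^{(k-t+1)\times k}$ satisfying a suitable \emph{genericity} condition has an extremal function with a Behrend/Rankin-type lower bound. By genericity I mean, roughly, that every set of $t$ (or $t-1$) columns of $A$ has full rank in the appropriate sense, so that every non-trivial solution involves many distinct values. The probabilistic part says that all but a $c_1(k)/h$ fraction of $(k,h)$-matrices are generic. I would first state the deterministic theorem in the form needed, including the role of $\ell$. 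Then I would check that the counting condition $\binom{t-2+2^{\ell}}{2^{\ell}} \leq k$ is exactly the hypothesis under which the construction works.

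For the deterministic part, I would follow the Behrend--Rankin template. Encode integers in base $D$ with small digits, so that the map to $\bZ^{N}$ is a Freiman $s$-isomorphism on the relevant sets (Lemma \ref{lemma:freiman_afree}, Lemma \ref{lemma:freiman_product}). Then take the digit vectors lying on a level set of a suitable polynomial: a sphere $\sum x_i^2 = R$ when $\ell=1$, and Rankin's iterated higher-degree level sets for general $\ell$. A non-trivial solution to $A\cdot x=0$ within such a level set would force a polynomial identity among $t-1$ ``free'' vectors, since the system has rank $k-t+1$ and leaves a $(t-1)$-dimensional solution space. The degree-$2^{\ell}$ monomials in $t-1$ free parameters, modulo the constant, number $\binom{t-2+2^{\ell}}{2^{\ell}}$. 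If $k$ is at least this number, the genericity of $A$ forces the relevant coefficient matrix to be non-singular, so the solution must be trivial. Optimising $D$ and the dimension in the usual way then gives the bound $n\exp(-c_2\log(n)^{1/(\ell+1)})$, with $c_2$ depending on $h$ through the digit-size restriction.

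For the probabilistic part, I would use a union bound. Each genericity condition is the non-vanishing of a fixed nonzero polynomial, of degree bounded in terms of $k$, in the entries of $A$. A uniformly random $(k,h)$-matrix therefore violates a given condition with probability $O_k(1/h)$, by Schwartz--Zippel applied over the box $[-h,h]$ after accounting for the linear constraints $\sum_i a_i=0$. There are $O_k(1)$ conditions, which gives the $c_1/h$ exceptional fraction. I would also add to the union bound the event from Remark \ref{remark:trivial_sols}, namely that $A$ admits a partition satisfying \eqref{eq:partition_trivial}. Outside this event the standard notion of trivial solution agrees with ours, so Shapira's $A$-free sets are $A$-free in the sense used here.

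I expect the main obstacle to be the deterministic step for general $\ell$. Matching the number of degree-$2^{\ell}$ monomials to the genericity needed, and running Rankin's iteration, is delicate. If Shapira's paper states the result only for $\ell=1$, I would first try to redo his argument with Rankin's higher-degree level sets in place of spheres, keeping the same genericity-by-Schwartz--Zippel framework.
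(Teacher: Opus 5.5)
Your two-part plan (a Behrend--Rankin construction for systems satisfying a genericity condition, plus a Schwartz--Zippel union bound over $(k,h)$-matrices) is the decomposition the paper uses. However, the genericity condition you name is not the one the construction needs. A column-rank condition of the kind you describe only lets you solve for some variables in terms of others. It does not prevent, say for $t=4$, $\ell=1$, $k=6$, the six parameter points in $\bQ^2$ from lying on a common conic (a circle through $0,e_1,e_2$ and three further rational points, no three collinear), and then the sphere argument breaks down.

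What is needed is \emph{non-degeneracy}. Write every solution as $z_i=z_1+\sum_{j=1}^{t-2}p_i^j(z_{j+1}-z_1)$, with $p_1=0$ and $p_2,\dots,p_{t-1}$ the unit vectors of $\bQ^{t-2}$. Then require that no nonzero polynomial in $t-2$ variables of total degree at most $2^\ell$ vanishes at all of $p_1,\dots,p_k$. The hypothesis $\binom{t-2+2^\ell}{2^\ell}\le k$ is exactly what makes the evaluation matrix of these monomials at the first $\binom{t-2+2^\ell}{2^\ell}$ points square. For the union bound you must still check that its determinant, as a rational function of the matrix entries, is not identically zero. The paper does this by specialising to a diagonalised matrix, where the last $k-t+1$ parameter points are free. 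Asserting that each condition is ``a fixed nonzero polynomial'' skips this step.

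The more serious gap is the deterministic step for $\ell\ge 2$. You defer it, and it cannot be cited, since Shapira only carries out $\ell=1$. Your sketch (a non-trivial solution in ``such a level set'' forces a degree-$2^\ell$ identity) handles a single level set. For instance, non-degeneracy does make a level set of $\sum_c x_c^{2^\ell}$ in $[M]^s$ free of non-trivial solutions. But pigeonholing over its at most $sM^{2^\ell}$ values only gives $n\exp(-O(\sqrt{\log n}))$. Rankin's gain comes from nesting spheres, and that requires propagating a property stronger than $A$-freeness.

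Call a set $(P,r)$-free if it contains no $\{Q(p_1),\dots,Q(p_k)\}$ with $Q$ a nonconstant tuple of real polynomials in $t-2$ variables of total degree at most $r$. The induction then runs as follows.
\begin{itemize}
\item Spheres $\{|q|^2=\rho\}$ are $(P,2^{\ell-1})$-free. Indeed, $\sum_j Q_j^2-\rho$ has degree at most $2^\ell$ and vanishes at every $p_i$, so it is zero and each $Q_j$ is constant.
\item Suppose $R\subset[sM^2]$ is $(P,2^{\ell'+1})$-free. Then for a suitable $z$ the set $\{q\in[2M]^s : |q|^2\in z+R\}$ has size at least $|R|M^s/(4sM^2)$ and is $(P,2^{\ell'})$-free. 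The reason is that the squared norms of a degree-$2^{\ell'}$ copy form a degree-$2^{\ell'+1}$ copy in $z+R$, which must be trivial.
\item At each stage, the Freiman transfer from $[M]^s$ to an interval must preserve $(P,r)$-copies, not merely solutions of $A$. This works because such copies are exactly the solutions of an auxiliary translation-invariant integer system. That system is cut out by the kernel of the transpose of the matrix of monomials of degree at most $r$ evaluated at the $p_i$.
\item Finally, a $(P,1)$-free set is $A$-free. Every solution is a degree-one copy, which is constant only when $z_1=\dots=z_{t-1}$, and then all $z_i$ coincide.
\end{itemize}
The last observation also makes your separate union bound over partitions as in Remark~\ref{remark:trivial_sols} unnecessary.
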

In the cited work, Shapira only proves the bound $r_A(n) \geq n \exp(-c_2
\log(n)^{1/2})$, as in the original Behrend construction, since this suffices
for his purposes. However, he does indicate how to obtain the stronger bounds in
Proposition \ref{prop:rankin_random_system}, which we do in Appendix
\ref{app:hypothesis_checking}. For now, we just note that Theorems
\ref{theo:random_systems_infseq} and \ref{theo:random_systems_alln} are an
immediate consequence of these bounds together with the $d=1$ case of Theorems
\ref{theo:inf_seq_bounds} and \ref{theo:main_alln}.

\section{Concluding remarks} \label{sec:final-remarks}
 Theorems \ref{theo:inf_seq_bounds} and  \ref{theo:main_alln}
improve our understanding of counting theorems of sets free of arithmetic
patterns, addressing the case of generic systems of equations and of arithmetic
progressions.
However, there remain several interesting open problems. The obvious one is
giving a full answer to the question of Cameron and Erd\H{o}s \cite{1990.cameron.erdos}.
\begin{question}
        Does Theorem \ref{theo:inf_seq_bounds} hold for $k = 3, 4$?
        Does it hold for all $n$?
\end{question}
We believe the answer to both questions is positive, and that Theorem
\ref{theo:inf_seq_bounds} itself provides ample evidence for the second one.
Regarding the first question, it is worth noting that the main weakness in our
proof of Theorem \ref{theo:inf_seq_bounds} does not lie only in the  smoothness
estimates for $r_k(n)$, such as those obtained in Lemma \ref{lemma:inf_seq}, but
also in the proof itself of our supersaturation result. In fact, even if we assume
the extremal threshold to have a specific form such as  $r_3(n) = n
\exp(-\sqrt{\log n})$, the random sampling argument in the proof of
supersaturation hits a natural obstacle, which is reflected in the fact that one
cannot guarantee hypothesis \eqref{eq:rk_hypothesis} in Proposition
\ref{prop:supersaturation_seq} for such a growth rate of $r_3(n)$.

A more modest goal than answering the question of Cameron and Erd\H{o}s in full
would be to obtain reasonable bounds on the constant in Theorem
\ref{theo:kaps_alln}, and particularly a bound that is not dependent on $k$. In
our proof, the iterated use of hypergraph containers is the main point where
this dependence plays a role.

The other main natural question is more open-ended and concerns clearing up the
picture for other systems of equations that do not satisfy Behrend or Rankin-type lower
bounds. A first question would be doing this in the case of a single equation.
\begin{question}
        Given an equation
        \[
                a_1 x_1 + \dots + a_k x_k = 0
        \]
        with $A = (a_1, \dots, a_k) \in \bZ^k$, when is the number of $A$-free
        sets $2^{r_A(n)(1+o(1))}$?
\end{question}
This already is a difficult task, since the behaviour of $r_A(n)$ is very far
from understood, and, as we already mentioned, there are some equations, such as
the one encoding Sidon sets, where the count is at least $2^{c r_A(n)}$ for some constant
$c > 1$.

\paragraph{\textbf{Acknowledgments}:}  A preliminary version of this work was presented at EUROCOMB 2025. We would like to thank Let\'icia Mattos for stimulating conversations on the topic of this paper.  

\bibliographystyle{abbrv}
\bibliography{main}
\appendix
\section{Rankin-type lower bounds}
\label{app:hypothesis_checking}
\subsection{Multidimensional Szemerédi}
\label{sec:multi_szemeredi}
The goal of this section is to prove Proposition \ref{prop:rankin_szemeredi}. As
we mentioned before, the proof is standard and uses Rankin's argument to find
large $k$-AP-free sets. However, we are not aware of the argument being carried
out explicitly in print, so we include it here for the sake of completeness.
Recall that our goal is the following. For every $X \subset \bZ^d$ with size
$|X| \geq 2^{\ell}+1$ and every large enough $n$, we must construct a set $A
\subset [n]^d$ that is $X$-free and has size
\begin{equation}
        \label{eq:rankin_szemeredi_size}
        |A| \geq n^d\exp\left(-C\log(n)^{1/(\ell+1)}\right),
\end{equation}
where $C > 0$ is a constant depending only on $X$ and $d$. Recall that our
definition of $X$-free excludes both positive and negative copies of $X$, but in
any case this is a more restrictive condition than the usual one, so as far as
lower bounds are concerned these are useful for both cases.

To do so, we first prove the existence of such a set in the one-dimensional
case, from which afterwards we deduce the larger-dimensional case via a
projection argument.
\begin{lemma}
        \label{lemma:rankin_multi_szemeredi}
        For $X \subset \bN$ of size $|X| \geq 2^{\ell}+1$ and large enough $n$, there
        exists an $X$-free set $A \subset [n]$ satisfying
        \[
        |A| \geq n\exp\left(-C\log(n)^{1/(\ell+1)}\right),
\]
\end{lemma}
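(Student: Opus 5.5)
We may assume $|X| = k = 2^{\ell}+1$, since a copy of $X$ contains a copy of every subset of $X$, so being $X'$-free for some $X' \subset X$ implies being $X$-free. The plan is Rankin's iterated Behrend construction. To make the induction run, we strengthen the property being avoided. For an integer $e \geq 1$, say a set $S$ (of integers, or of vectors in $\bZ^D$) is \emph{$(X,e)$-free} if there is no polynomial map $P$ of degree at most $e$ (coordinatewise, in the vector case) such that the points $P(x)$, $x \in X$, all lie in $S$ and are not all equal. Copies $b+\lambda x$ are exactly the case $e=1$, so $(X,1)$-freeness in $[n]$ is what we need. Let $T_e(N)$ be the largest size of an $(X,e)$-free subset of $[N]$.

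\textbf{Base case.} For $e \geq 2^{\ell} = k-1$, we use divided differences. Set $c_i = 1/\prod_{j\neq i}(x_i - x_j)$. Then $\sum_i c_i Q(x_i) = 0$ for every polynomial $Q$ of degree at most $k-2$, and $\sum_i c_i Q(x_i)$ equals the coefficient of $t^{k-1}$ for $\deg Q = k-1$. In the recursion the relevant scalar polynomials will be of the form $|P(t)|^2$ iterated, and they will have degree exactly $2^{\ell}$ with positive leading coefficient. So a single point, i.e.\ a single level set, suffices: if all values $Q(x_i)$ equal the same $R$, then $\sum_i c_i Q(x_i) = R \sum_i c_i = 0$, contradicting that the leading coefficient is nonzero. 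Hence at the top level $T \geq 1$. I will phrase the class of avoided polynomials carefully, as ``nonconstant, of degree exactly $2^j$ with positive leading coefficient'', so that the divided-difference argument applies.

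\textbf{Recursive step.} Fix a base $M$, a dimension $D$, and a constant $s$ large in terms of $X$, as in Lemma \ref{lemma:freiman_afree}. Consider digit vectors $v \in [0, M/s)^D$, identified with integers in base $M$. Because there are no carries, this identification is a Freiman $s$-isomorphism, in the same way as Lemma \ref{lemma:freiman_product}. By Lemma \ref{lemma:system_copiesx} (for $e=1$), and by the analogous polynomial interpolation identities for general $e$, the configurations are cut out by linear equations with bounded integer coefficients. Therefore a configuration among integers gives a configuration among the digit vectors, with $P$ now vector-valued.

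Define
\[
S = \{v : |v|^2 \in W\}, \qquad W \text{ an } (X,2e)\text{-free subset of } [DM^2] \text{ of size } T_{2e}(DM^2),
\]
where the translate of $W$ is chosen by averaging so that $|S| \gtrsim (M/s)^D\,T_{2e}(DM^2)/(DM^2)$. If $P$ is a vector polynomial of degree at most $e$ with all $P(x_i) \in S$, then $t \mapsto |P(t)|^2$ is a scalar polynomial of degree $2\deg P$. It is nonconstant whenever $P$ is, since its leading coefficient is a positive sum of squares. Its values at the $x_i$ lie in $W$, so they are all equal, and by the base-case identity at the last level $P$ must be constant. This gives
\[
T_e(M^D) \gtrsim s^{-D} M^{D}\,\frac{T_{2e}(DM^2)}{DM^2}.
\]

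\textbf{Optimization, the main obstacle.} Iterate this $\ell$ times, from $e=1$ up to $e=2^{\ell}$, choosing at each level $j$ a dimension $D_j$ and a base $M_j$. The outer scale is $n = M_1^{D_1}$, and the inner scales are $N_{j+1} = D_j M_j^2 = M_{j+1}^{D_{j+1}}$. The total loss is $\prod_j s^{D_j} D_j M_j^2$. The delicate part is to check that, with $D_j \approx (\log n)^{1/(\ell+1)}$-type choices and $\log N_{j+1} \approx 2\log N_j / D_j$, all losses are $\exp(O((\log n)^{1/(\ell+1)}))$. This is the same bookkeeping as in Rankin's proof, with the extra $s^{D}$ factors absorbed into the constant $C$. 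Finally, a general $n$ is handled by monotonicity and rounding.
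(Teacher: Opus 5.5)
Your construction is the paper's: Rankin's iteration of Behrend spheres $\{v : |v|^2 \in z+W\}$, with $W$ free of polynomial configurations of twice the degree, pushed to the integers by a no-carry Freiman map. You start the recursion at a singleton $(X,2^{\ell})$-free set rather than at the sphere, which is the same construction one level earlier. Your one-step inequality $T_e(M^D)\gtrsim s^{-D}M^D\,T_{2e}(DM^2)/(DM^2)$ is correct.

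The gap is in the step you flag as the main obstacle. The total loss is not $\prod_j s^{D_j}D_jM_j^2$. With that accounting the target exponent is unreachable for $\ell\ge 2$: already $s^{D_1}D_1M_1^2\ge s^{D_1}n^{2/D_1}\ge \exp(c\sqrt{\log n})$, so you would only recover Behrend's bound. Likewise, your claim that ``all losses'' are $\exp(O((\log n)^{1/(\ell+1)}))$ fails for the intermediate scales. For example, $\log N_2\approx 2\log n/D_1\approx 2(\log n)^{\ell/(\ell+1)}$ when $D_1\approx(\log n)^{1/(\ell+1)}$.

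The missing observation is that your inequality is really a statement about densities, $T_e(N_j)/N_j\gtrsim s^{-D_j}\,T_{2e}(N_{j+1})/N_{j+1}$, so the intermediate scales telescope. Unrolling down to the singleton gives $T_1(n)/n\gtrsim c^{\ell}s^{-\sum_j D_j}/N_{\ell+1}$. The only scale-dependent loss is the innermost $N_{\ell+1}=D_\ell M_\ell^2$, which comes from the singleton base case. Taking every $D_j\approx(\log n)^{1/(\ell+1)}$ gives $\log N_{\ell+1}\approx 2^{\ell}(\log n)^{1/(\ell+1)}$ and $\log s\sum_j D_j=O((\log n)^{1/(\ell+1)})$, which is the claim. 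Rounding the $M_j$ to integers costs only constant factors, since $M_j\gg D_j$.

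The paper avoids global bookkeeping by inducting on the density bound itself. It assumes $(X,2^{\ell'+1})$-free sets of density $\exp(-C_1\log(N)^{1/(\ell-\ell')})$ exist in $[N]$ for all large $N$. It then chooses $t=\log(n)^{1/(\ell+1-\ell')}$ at the current level, so the inner set in $[M^2t]$, with $\log(M^2t)\approx 2\log n/t$, has density $\exp(-O(\log(n)^{1/(\ell+1-\ell')}))$.

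Separately, drop the divided-difference argument. It only handles degree exactly $k-1$, whereas in your recursion $|P|^2$ has degree $2\deg P$, which may be smaller. The fact you need, and the one the paper uses, is simply that a polynomial of degree at most $k-1$ taking the same value at $k$ distinct points is constant. Restricting to ``positive leading coefficient'' is also unnecessary. At the last level you must exclude copies with $\lambda<0$ as well, which your vector construction does anyway.
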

The proof follows the presentation of Lacey and Laba \cite{2001.ll}. 
In
order to build large $X$-free sets in $[N]$, it is useful to first construct them in higher dimensional settings $[M]^t$ and then project them using an appropriate Freiman homomorphism. For that purpose, it is convenient to generalise the definition of copies of $X$ as follows.
\begin{definition}
        Let $t > 0$ be an integer.
        A polynomial copy of $X = \{x_1, \dots, x_k\} \subset \bN$ of degree $r$ in $\bR^t$ is a set
        \[
                \{P(x_1), P(x_2), \dots, P(x_k)\} \subset \bR^t,
        \]
        where $P \in \bR[x]^t$ is a tuple of $t$ polynomials of degree at most $r$ with
        real coefficients. We say that a polynomial copy is non-trivial if $P$
        is not constant, and that a set in $\bR^t$ is $(X, r)$-free if it contains no
        non-trivial polynomial copy of $X$ of degree $r$.
\end{definition}
We only define polynomial copies of $X$ for one-dimensional
configurations $X \subset \bN$ because that is all we need for the proof of
Lemma \ref{lemma:rankin_multi_szemeredi}.
Notice too that an $(X,1)$-free set in $\bR$ in particular is $X$-free in our
usual sense of copies of $X$.
Before giving the details of the proof of Lemma
\ref{lemma:rankin_multi_szemeredi}, let us give an overview of the three main
ideas: 
\begin{itemize}
        \item The first observation is that we may search for $X$-free sets in $[M]^t$, where
                $M$ and $t$ are parameters to be optimised, and then project
                this solution to $[N]$ in a manner that does not introduce
                copies of $X$.
        \item The second and most important point of the proof, drawing from
                Behrend's construction of 3-AP-free sets, is that a
                $t$-dimensional sphere intersects a polynomial of degree
                $2^{\ell-1}$ in at most $2^{\ell}$ points. Choosing the radius
                appropriately, we find a $t$-dimensional sphere with many
                integer points, thus finding a large $(X, 2^{\ell-1})$-free set
                in $[M]^t$.
        \item The final idea is to leverage the existence of
                $(X, 2^{\ell-1})$-free
                sets to build larger $(X, 2^{\ell-2})$-free sets. This is done by
                considering the union of spheres in $[M]^t$ whose radii form an
                $(X, 2^{\ell-1})$-free set. It turns out this union is then $(X,
                2^{\ell-2})$-free, and iterating this reasoning gives our final
                bound on $(X, 1)$-free sets.
\end{itemize}

In order to be able to project $X$-free sets in $[M]^t$ to $X$-free sets in $[N]$,
the following lemma will be useful.
\begin{lemma}
        \label{lemma:projection_behrend}
        Given $X \subset \bN$ and an integer $0 < r < |X|$,
        there exists an integer $K > 0$  such that the largest size of an
        $(X,r)$-free set in $[(KM)^t]$ is at least the one of any $(X,
        r)$-free set in $[M]^t$.
\end{lemma}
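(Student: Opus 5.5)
The plan is to use a base-$KM$ digit map and to check that it is injective and carries no non-trivial polynomial copy into the image. Let $\psi\colon [M]^t \to \bZ$ be
\[
\psi(a_1,\dots,a_t)=\sum_{j=1}^t a_j (KM)^{j-1},
\]
which is linear and sends $[M]^t$ into $[(KM)^t]$ after a harmless shift by a constant (translation preserves being $(X,r)$-free, since adding a constant to $P$ keeps it a polynomial of the same degree). If $S\subset[M]^t$ is $(X,r)$-free, we show $\psi(S)$ is $(X,r)$-free and $|\psi(S)|=|S|$. Injectivity is immediate from uniqueness of base-$KM$ expansions with digits in $[M]\subset[0,KM-1]$.

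The key step is to encode ``being a polynomial copy of degree $r$'' by integer linear relations of bounded height. Write $X=\{x_1,\dots,x_k\}$ with $k>r$. Points $y_1,\dots,y_k\in\bR$ satisfy $y_i=P(x_i)$ for some real $P$ with $\deg P\le r$ if and only if the vector $(y_1,\dots,y_k)$ lies in the column space of the $k\times(r+1)$ Vandermonde matrix $V=(x_i^e)$. Since $k>r$ and the $x_i$ are distinct, $V$ has full column rank and this space is a proper subspace. It is therefore cut out by an integer matrix $A_{X,r}\in\bZ^{(k-r-1)\times k}$, a basis of the left kernel of $V$ taken with integer entries because $V$ is rational. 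The same matrix applied coordinatewise characterises polynomial copies in $\bR^t$: $\{P(x_i)\}$ with $P\in\bR[x]^t$ of degree at most $r$ exists if and only if $A_{X,r}$ annihilates each coordinate sequence. Moreover, the copy is trivial exactly when $P$ is constant, and since $V$ is injective, $P$ is recovered from $(y_i)$. Hence a non-trivial copy in the image gives a non-constant $P$ in each case.

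Now suppose $\psi(s_1),\dots,\psi(s_k)$ form a non-trivial polynomial copy, so that $A_{X,r}\cdot(\psi(s_i))_i=0$. Each row $c$ gives $\sum_i c_i\psi(s_i)=0$, that is,
\[
\sum_j (KM)^{j-1}\sum_i c_i s_i^j=0 .
\]
Each inner sum has absolute value less than $\|c\|_1 M$. Choosing $K> 2\max_c\|c\|_1$, which depends only on $X$ and $r$, a standard carry argument (reduce modulo $KM$, use that the lowest inner sum is smaller than $KM/2$ in absolute value to conclude it is $0$, divide, and iterate) shows every inner sum vanishes. This is the same argument as in Lemma~\ref{lemma:freiman_product}, equivalently a Freiman $s$-isomorphism argument with $s=\max\|c\|_1$ as in Lemma~\ref{lemma:freiman_afree}. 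Hence each coordinate sequence $(s_i^j)_i$ lies in the column space of $V$, so $s_1,\dots,s_k$ form a polynomial copy in $\bR^t$ of degree at most $r$.

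It remains to see this copy is non-trivial. The copy in the image is non-trivial, so the $\psi(s_i)$ are not all equal. By injectivity the $s_i$ are then not all equal, so the corresponding $P$ is non-constant, contradicting that $S$ is $(X,r)$-free.

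The main obstacle is the second step: making sure that ``polynomial copy'' is exactly a linear condition with integer coefficients depending only on $(X,r)$, including the equivalence of non-triviality with the points not all being equal. This is what lets the carry argument pass relations back through $\psi$, and it uses $r<|X|$ via full rank of $V$.
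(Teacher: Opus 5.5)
Your proof is correct and follows essentially the same route as the paper: you encode degree-$r$ polynomial copies of $X$ as non-trivial solutions of the integer system whose rows span $\ker V^T$, then flatten $[M]^t$ into $[(KM)^t]$ by a Freiman-isomorphism-type map and pass the relations back with a carry argument. The differences are cosmetic. You write down an explicit base-$KM$ digit map, verify the carries directly, and spell out that non-triviality is equivalent to the points not all being equal. The paper instead invokes Corollary~\ref{cor:freiman_product} with $d=1$ and takes $K=s$.
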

\begin{proof}
        Let $X = \{x_1, \dots, x_k\}$ be the elements of $X$. In order to use
        the properties we established for Freiman homomorphisms, it is convenient
        to describe polynomial copies of $X$ of degree $r$ as solutions to a
        certain system of equations. We begin with the case of a polynomial copy
        of $X$ of degree $r$ in $\bN$. Writing out the definition, integers
        $y_1, \dots, y_k \in \bN$ form a polynomial copy of $X$ of degree $r$ if
        \begin{equation}
                \label{eq:pol_copy}
                \begin{pmatrix}
                        1 & x_1 & \dots & x_1^r \\
                        1 & x_2 & \dots & x_2^r \\
                        \vdots & \vdots & & \vdots  \\
                        1 & x_k & \dots & x_k^r  \\
                \end{pmatrix} \cdot  
                \begin{pmatrix}
                        a_0 \\ \vdots \\ a_r
                \end{pmatrix}
                =
                \begin{pmatrix}
                        y_1 \\ \vdots \\ y_k
                \end{pmatrix}
        \end{equation}
        for some $a_0, \dots, a_r \in \bR$ coefficients of the corresponding
        polynomial. Write $V$ for the Vandermonde matrix on the left hand
        side, so that polynomial copies of $X$ lie in $\Ima V$ when $V$ is
        viewed as a linear map over $\bR$.
        Since $x_1, \dots, x_k$ are distinct and $k \geq r
        +1$, $V$ has rank $r+1$, and hence $\dim(\ker V^T) = k-r-1$. Let $v_1, \dots, v_{k-r-1} \in \bQ^{k}$ form a basis of
        $\ker V^T$, which we may choose to have rational coefficients on account
        of the entries of $V$ being integers. Notice that $v_i \cdot u = 0$ for
        any $u \in \Ima V$ for all $1 \leq i \leq k-r-1$, and, by a
        dimensionality argument, these are all possible solutions to the system
        formed by these equations. Since all the $v_i$ have rational coefficients, there
        exists $L \in \bN$ such that $Lv_i$ has integer coefficients for $1 \leq
        i \leq k-r-1$. Writing
        $A$ for the matrix whose $i$-th row is formed by $Lv_i$, by the previous
        discussion polynomial copies of $X$ of degree $r$ are precisely
        solutions to $A \cdot y = 0$ with $y^T = (y_1, \dots, y_k) \in \bN^k$.
        Therefore, a set in $\bN$ is $A$-free if and only if it is $(X,
        r)$-free. In particular, since $(1, \dots, 1)$ is a trivial polynomial copy of $X$, we have that $A \cdot (1, \dots, 1)^T = 0$ and $A$ is a translation-invariant matrix. As for polynomial copies in $\bN^t$, we obtain an equation as
        in \eqref{eq:pol_copy} for all $t$ components. 
        Applying the same reasoning as before for every component, we obtain that $y_1,
        \dots, y_k \in \bN^t$ form a polynomial copy of $X$ of degree $r$ if and
        only if $y^T = (y_1, \dots, y_k)$ satisfies $A \cdot y = 0$.

        Finally, let $K$ be the constant $s$ provided by Corollary \ref{cor:freiman_product}
        with $d=1$. Since being $(X, r)$-free is equivalent to being $A$-free
        both in $\bN$ and in $\bN^t$, Corollary \ref{cor:freiman_product}
        implies the desired statement.
\end{proof}

From the previous discussion, it should be clear that it is convenient to prove the
following generalization of Lemma \ref{lemma:rankin_multi_szemeredi}.
\begin{lemma}
        \label{lemma:inductive_rankin}
        For $X \subset \bN$ of size $|X| \geq 2^{\ell}+1$ and large enough $n$, given
        $0 \leq \ell' \leq \ell-1$, there
        exists an $(X, 2^{\ell'})$-free set $A \subset [n]$ of size
        \[
                |A| \geq  n\exp\left(-C\log(n)^{1/(\ell+1-\ell')}\right),
        \]
        where $C > 0$ is a constant depending on $X$.
\end{lemma}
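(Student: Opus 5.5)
We argue by downward induction on $\ell'$, from $\ell'=\ell-1$ down to $\ell'=0$, following the Behrend--Rankin sphere argument sketched above.

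\textbf{Base case $\ell'=\ell-1$.} A non-trivial polynomial copy of $X$ of degree $2^{\ell-1}$ consists of points $P(x_1),\dots,P(x_k)$ with $P$ non-constant. If all of them lie on a common sphere $\{y\in\bR^t\colon \|y\|^2=R\}$, then $\|P(x)\|^2-R$ is a polynomial in $x$ of degree at most $2^{\ell}$ vanishing at the $k\geq 2^{\ell}+1$ distinct points of $X$. Hence it vanishes identically. Comparing leading coefficients, the top-degree coefficient vectors of $P$ have zero norm, so they vanish and $P$ is constant, a contradiction. Therefore every sphere is $(X,2^{\ell-1})$-free, since polynomial copies are preserved under restriction to a sphere.

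Take $[M]^t$. By pigeonhole over the at most $tM^2$ possible values of $\|y\|^2$, some sphere contains at least $M^{t}/(tM^2)$ points of $[M]^t$. Lemma \ref{lemma:projection_behrend} moves this set into $[(KM)^t]$. Setting $n=(KM)^t$ with $t\approx \log(n)^{1/2}$ and $M\approx \exp(\log(n)^{1/2})$, the loss factor is $K^{-t}M^{-2}/t=\exp(-O(\log(n)^{1/2}))$. This matches the claimed exponent $1/(\ell+1-\ell')=1/2$.

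\textbf{Inductive step $\ell'+1\to\ell'$.} Let $S\subset[tM^2]$ be an $(X,2^{\ell'+1})$-free set, which exists by induction with density $\exp(-C\log(tM^2)^{1/(\ell-\ell')})$. Consider $U=\{y\in[M]^t\colon \|y\|^2\in S\}$. If $P$ has degree at most $2^{\ell'}$ and $P(x_i)\in U$ for all $i$, then $Q(x)=\|P(x)\|^2$ is a polynomial of degree at most $2^{\ell'+1}$ with $Q(x_i)\in S$. Since $S$ is $(X,2^{\ell'+1})$-free, this copy must be trivial, so $Q$ is constant. As before, comparing leading coefficients forces $P$ to be constant. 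Thus $U$ is $(X,2^{\ell'})$-free.

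To bound $|U|$, average over the radii. The values $\|y\|^2$ are spread over a window of length roughly $M^2\sqrt t$ around $tM^2/3$, with fairly uniform concentration. It therefore suffices to choose $S$ dense inside that window, which we arrange by applying the induction hypothesis inside an interval of that length and translating it, using that translation preserves polynomial-copy-freeness. This gives
\[
|U|\gtrsim M^t\exp\bigl(-C\log(M^2t)^{1/(\ell-\ell')}\bigr)/\mathrm{poly}(t).
\]
We then project via Lemma \ref{lemma:projection_behrend} to $n=(KM)^t$, paying a factor $K^{-t}$.

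\textbf{Optimisation.} With $\log n\approx t\log M$, the total loss is
\[
\exp\bigl(-O(t)-C(\log M)^{1/(\ell-\ell')}\bigr).
\]
Choosing $t\approx(\log n)^{1/(\ell+1-\ell')}$, so that $\log M\approx(\log n)^{(\ell-\ell')/(\ell+1-\ell')}$, makes both terms $O\bigl(\log(n)^{1/(\ell+1-\ell')}\bigr)$, as required. For $n$ not of the form $(KM)^t$ we use monotonicity: take the largest such value below $n$, which costs at most a factor absorbed into the constant.

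\textbf{Main obstacle.} The delicate step is the averaging in the inductive step. It needs a local concentration estimate for $\|y\|^2$ on $[M]^t$, for example via a local limit theorem or a direct second-moment and pigeonhole argument over a window, so that a dense $S$ in the window yields a comparably dense $U$. The constant $C$ must also be kept uniform through the induction. If the local estimate proves too fiddly, the fallback is to randomly translate $S$ within the window and average, which gives the same bound up to $\mathrm{poly}(t)$ losses.
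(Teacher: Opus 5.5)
Your overall scheme is the paper's: downward induction on $\ell'$, a single sphere $\{\|y\|^2=R\}$ for the base case, the set $\{y\colon \|y\|^2\in S\}$ for the inductive step, the leading-coefficient argument that $\|P\|^2$ constant forces $P$ constant, projection via Lemma~\ref{lemma:projection_behrend}, and $t\approx\log(n)^{1/(\ell+1-\ell')}$. The one real hole is the size bound in your primary inductive step. You claim that $\|y\|^2$ is ``fairly uniformly'' distributed at unit scale on a window of length $M^2\sqrt t$, so that any dense $S$ placed there gives a dense $U$. That is a local limit statement for sums of $t$ squares in a box, needed uniformly as $t\to\infty$ with $M$ huge. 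You do not prove it, and it is a genuinely nontrivial analytic input: without it, a fixed $S$ could a priori sit on under-represented radii.

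What you call the fallback is exactly the paper's argument, and it needs no concentration estimate at all. Take $R\subset[tM^2]$ that is $(X,2^{\ell'+1})$-free, and for $0\le z\le 4tM^2$ put $B_z=\{q\in[2M]^t\colon \|q\|^2\in z+R\}$. For $q\in[M,2M]^t$ we have $tM^2\le\|q\|^2\le 4tM^2$, so each pair $(q,r)\in[M,2M]^t\times R$ determines exactly one admissible $z=\|q\|^2-r$. Hence $\sum_z|B_z|\ge M^t|R|$, and some $z$ satisfies $|B_z|\ge |R|M^t/(4tM^2+1)$. Since $z+R$ is still $(X,2^{\ell'+1})$-free, your freeness argument applies to $B_z$ verbatim. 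The only density loss is the factor $2^{-t}$ from $[M,2M]^t\subset[2M]^t$, which is absorbed into the $K^{-t}$ you already pay, so your worry about $\mathrm{poly}(t)$ losses is moot. Make this double count the main argument and drop the local-limit discussion; with that change your proof coincides with the paper's.
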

\begin{proof}
        By possibly passing to a subset, we assume $X = \{x_1, \dots,
        x_{2^{\ell}+1}\}$.
        We argue by induction on $\ell'$ from $\ell' =\ell-1$ to $\ell'=0$. Suppose first that
        $\ell'=\ell-1$. In that case, let $M$ and $t$ be parameters to be optimised
        later, and consider
        \[
                B_r = \{(q_1, \dots, q_t) \in [M]^t \colon q_1^2 + \dots + q_t^2
                = r\},
        \]
        the integer points of a sphere of radius $\sqrt{r}$. Notice that any point in
        $[M]^t$ belongs to a single $B_r$ for $1 \leq r \leq tM^2$, so, by an
        averaging argument, there exists a particular $R \in [tM^2]$ such that $B=B_R$
        contains at least $M^t/(tM^2)$ points. We claim that $B$ is $(X,
        2^{\ell-1})$-free. Indeed, suppose that
        \[
                P(x_1), \dots, P(x_{2^{\ell}+1}) \in B
        \]
        for some $P = (P_1, \dots, P_t) \in \bR[x]^t$ with $P_i$ polynomials of
        degree at most $2^{\ell-1}$. Then
        \[
                P_1(x_i)^2 + \dots + P_t(x_i)^2 = R
        \]
        for all $1 \leq i \leq 2^{\ell}+1$. It follows that $P_1^2 + \dots + P_t^2
        - R$ is a
        polynomial of degree at most $2^{\ell}$ with $2^{\ell} + 1$ distinct roots, so it must be
        zero and $P_1^2 + \dots +P_t^2 = R$, which only happens if $(P_1,
        \dots, P_t)$ is constant. In other words, any polynomial copy of $X$ of
        degree $2^{\ell-1}$ is
        trivial, which proves our claim. Finally, appealing to Lemma
        \ref{lemma:projection_behrend}, there exists a constant $K$ such that
        $[(KM)^t]$ contains an $(X, 2^{\ell-1})$-free set of size $|B| = M^t/(tM^2)$.
        Setting $M = n^{1/t}/K$ and $t = \sqrt{\log(n)}$ gives a set of size at
        least
        \[
                \frac{M^t}{tM^2} \geq \frac{n}{K^t \sqrt{\log(n)} n^{2/t}} \geq
                \frac{n}{e^{(\log(K) + 3)\sqrt{\log(n)}}}
        \]
        in $[n]$, as desired.

        Having proved the base case, let us now carry out the induction step.
        Given $\ell' \leq \ell-2$, let $C_1$ be the constant provided by the lemma
        for the case $\ell'+1$. 
        Again, let $M$ and $t$ be parameters to be
        optimised. Our inductive hypothesis guarantees the existence of an $(X,
        2^{\ell'+1})$-free set $R \subset [M^2t]$ of size at least
        \[
                |R| \geq M^2t\exp\left(-C_1\log(M^2t)^{1/(\ell-\ell')}\right).
        \]
        For $z\in \bN$ with $z\leq 4tM^2$, we define
        \[
                B_z = \left\{ (q_1, \dots, q_t) \in [2M]^t \colon q_1^2 + \dots + q_t^2 \in
                z+R\right\},
        \]
        which we will later prove to be $(X, 2^{\ell'})$-free. Before doing that,
        we find a value of $z$ where $B_z$ contains many points.
        Notice that all $q \in [M, 2M]^t \subset [2M]^t$ satisfy
                $tM^2 \leq q_1^2 + \dots + q_t^2 \leq 4tM^2$, so there exist
                $|R|M^t$ triples $(q, r, z) \in [M, 2M]^t \times R \times
                [4tM^2]$ such that
        \[
                q_1^2 + \dots + q_t^2 - r = z
        \]
Averaging over all its possible values,
we obtain a particular $z$ such that $B = B_{z}$ has size
at least
\begin{equation}
        \label{eq:size_radius_set}
        |B| \geq \frac{|R|M^t}{4tM^2}.
\end{equation}
We now prove that indeed $B$ is $(X, 2^{\ell'})$-free. First,
observe that $z+R$ is still $(X, 2^{\ell'+1})$-free, because it is a
translation-invariant property.
        Suppose now that
        \[
                P(x_1), \dots, P(x_{2^{\ell}+1}) \in B
        \]
        for some $P = (P_1, \dots, P_t) \in \bR[x]^t$ with $P_i$ polynomials of
        degree at most $2^{\ell'}$. Then
        \[
                P_1(x_i)^2 + \dots + P_t(x_i)^2 \in z+R
        \]
        for all $1 \leq i \leq 2^{\ell}+1$. In other words, the set of $(P_1^2 +
        \dots + P_t^2)(x_i)$ form a polynomial copy of $X$ of degree
        $2^{\ell'+1}$ in $z+R$. Since $z+R$ is $(X, 2^{\ell'+1})$-free, $P_1^2 +
        \dots +P_t^2$ is constant on all points of $X$. Reasoning as in the base
        case, this implies that $(P_1, \dots, P_t)$ is constant, so any
        polynomial copy of $X$ of degree $2^{\ell'}$ in $B$ is trivial.

        Finally, again applying Lemma \ref{lemma:projection_behrend} gives a
        constant $K$ such that $[(MK)^t]$ contains an $(X, 2^{\ell'})$-free set
        of size $|B|$. Plugging in $t = \log(n)^{1/(\ell+1-\ell')}$ and
        $M = n^{1/t}/K$, and using \eqref{eq:size_radius_set} gives a set of size at least
        \begin{multline*}
                |B| \geq \frac{M^t|R|}{4tM^2} \geq
                \frac{n}{K^t 4}\exp\left(-C_1\log(M^2 t)^{1/(\ell-\ell')}\right)
                \geq \\ \geq
                n\exp\left(-(\log(K) + 3C_1)\log(n)^{1/(\ell+1-\ell')}\right)
        \end{multline*}
        in $[n]$, which finishes the inductive step and hence our proof.
\end{proof}

Once we are done with the case of one dimension, the general case follows by
projecting along a direction that does not remove any point in $X$ and pulling
back an $X$-free set in one dimension along this projection. The details follow. 
\begin{proof}[Proof of Proposition \ref{prop:rankin_szemeredi}]
        The case of $d=1$ is dealt with by Lemma \ref{lemma:rankin_multi_szemeredi},
        so we may assume that $d \geq 2$.
        By possibly passing to a subset, we may also take $X=\{x_1, \dots,
        x_{2^{\ell}+1}\}$.
        We first find $c \in \bN^d$ such that $(x_i-x_j) \cdot c \neq 0$ for all
        $1 \leq i < j \leq 2^{\ell}+1$. To do so, notice that the set of differences
        $x_i-x_j$ is finite, and hence the set of points $c$ where $(x_i-x_j)
        \cdot c = 0$ is a finite union of hyperplanes, which cannot cover
        $\bN^d$.
        Given such a $c$, define the projection
        \begin{align*}
                \pi \colon \bN^d &\to \bN\\
                x &\mapsto \pi(x)=c \cdot x.
        \end{align*}
        By construction, we also have that $\pi(x_i) = c \cdot x_i \neq c \cdot
        x_j = \pi(x_j)$ for $i \neq j$, so that $|\pi(X)| = |X|$.
        Write $K = c_1 + \dots + c_d$, which guarantees that $\pi\left([n]^d\right) \subset
        [Kn]$. By Lemma \ref{lemma:rankin_multi_szemeredi}, there exists a
        $\pi(X)$-free set $B \subset [Kn]$ of size as in 
        \eqref{eq:rankin_szemeredi_size} with constant $C_1>0$.    Since
        $\pi([n]^d) \subset [Kn]$, the set of triples 
        \[
                \left\{(y, b, s) \in [n]^d \times B \times [-Kn, Kn] \colon \pi(y) =
                b+s\right\}
                \]
        is of size at least $n^d |B|$. Averaging over all possible values
        of $s$, we obtain a particular $B' = B+s$ such that $A=\pi^{-1}(B')$
        has size at least
        \[
                |A| = \frac{n^d|B|}{2Kn} \geq \frac{n^{d}}{2K}\exp\left(-C_1
                        \log(Kn)^{1/(\ell+1)}\right).
        \]

        We claim that $A$ is $X$-free, which would finish the proof by taking $C$
        large enough. Indeed, suppose there exist $y \in \bR^d$ and $\lambda \in
        \bR_{>0}$ such that
        \[
                y + \lambda x_i \in A
        \]
        for all $1 \leq i \leq 2^{\ell}+1$. Then the values
        \[
                \pi(y+\lambda x_i) = c \cdot y + \lambda c \cdot x_i \in B'
        \]
        for $1 \leq i \leq 2^{\ell} +1$ form
        a copy of $\pi(X)$ in $B'$, which is $\pi(X)$-free on account of
        being a translation of a $\pi(X)$-free set, and hence $\lambda = 0$.
        This proves that all copies of $X$ in $A$ are trivial, as we claimed.
\end{proof}

\subsection{Systems of equations}
The goal of this section is to prove Proposition
\ref{prop:rankin_random_system}. Let $k,h, t$, and $\ell$ be integers as in the
proposition's statement, and write $m = k-t+1$ for the number of equations. As we already mentioned, we carry out an
argument of Shapira \cite{2006.shapira}, who only proved the bound $r_A(n) \geq
n\exp(- \Omega(\sqrt{\log n}))$ since it sufficed for his purposes, but
indicated that his argument together with Rankin's \cite{1960.rankin} gives the
claimed bound. Shapira's argument can be broken down in two parts:
\begin{enumerate}
        \item Solutions to almost all systems of $(k, h)$-equations admit a
                \emph{parametric representation} of the following form: there
                exist $p_1, \dots, p_k \in \bQ_{O_{h,k}(1)}^{t-2}$, meaning that the numerators
                and denominators of $p_i^j$ are bounded in
                terms of $h$ and $k$, and any solution to the system $z_1,
                \dots, z_{k} \in \bZ^r$, for $r \geq 1$, satisfies
                \begin{equation}
                        \label{eq:parametric_representation}
                        z_i = z_1 + p_{i}^1 (z_2 - z_1) + \dots +
                        p_i^{t-2} (z_{t-1} - z_1)
                \end{equation}
                for all $1 \leq i \leq k$.
                Furthermore, they admit a
                \emph{non-degenerate} parametric representation, meaning that the only
                $(t-2)$-variate polynomial $Q \in \bR[x_1, \dots, x_{t-2}]$ of
                total degree at most $2^{\ell}$ such that
                $Q(p_i^1, \dots, p_i^{t-2}) = 0$ for $1 \leq i \leq k$ is the
                null polynomial $Q=0$.
        \item Assuming that a system admits a non-degenerate parametric
                representation, use the argument of Rankin \cite{1960.rankin}
                together with $\eqref{eq:parametric_representation}$ to build a
                set without  non-trivial solutions to the system. More
                concretely, given $P = (p_1, \dots, p_k)$ as in
                \eqref{eq:parametric_representation}, we define a \emph{polynomial
                copy of degree $r$ of $P$} to be a set
                \[
                        \left\{Q(p_i^1, \dots, p_i^{t-2}) \colon 1 \leq i \leq k
                        \right\},
                \]
                for $Q \in \bR[x_1, \dots, x_{t-2}]$ a $(t-2)$-variate
                polynomial of total degree at most $r$. As one would expect, we
                say a copy is trivial if $Q$ is constant and a set is $(P, r)$-free
                if it only contains trivial polynomial copies of degree $r$.
                One may then use high-dimensional spheres and the non-degeneracy
                of $P$, as in Behrend's construction, to find a large $(P,
                2^{\ell-1})$-free set, and use Rankin's inductive scheme to
                eventually build a large $(P, 1)$-free set, which cannot contain
                non-trivial solutions to \eqref{eq:parametric_representation}.
\end{enumerate}
We closely follow Shapira's presentation of the argument, with some small
changes of notation for the sake of consistency with the rest of our paper. In
particular, note that our definition of the parametric representation is
slightly different from Shapira's, since we absorb the term $d$ appearing in
\cite[Claim 2.2]{2006.shapira} in the parameters $p_i$. Let us begin by
establishing the existence of a non-degenerate parametric representation as
claimed. First let us describe how the representation $P = (p_1, \dots, p_k)$ is
built. To do so, it is convenient to first reduce the problem to a particularly
simple form of matrices.
\begin{definition}
        We say an $m \times k$ matrix $A$ is \emph{diagonalised} if the
        only non-zero coefficients are those in the first $t-1$ columns and
        $A_{i, t-1+i}$ for $1 \leq i \leq m$.
\end{definition}
The advantage of diagonalised matrices is that it is easy to define their
parametric representation.
\begin{lemma}
        \label{lemma:parametric_diagonalised}
        Given a translation-invariant diagonalised matrix $A \in \bQ^{m \times
        k}$ with non-zero coefficients $A_{i, t-1+i} \neq 0$ for all $1 \leq i
        \leq m$, its set of solutions may be parametrised as in
        \eqref{eq:parametric_representation} with $p_1 = (0, \dots, 0)$, $p_i$
        equal to the $i^{th}$ unit vector for $2 \leq i \leq t-1$, and
\[
        p_i =\frac{1}{-A_{i-t+1, i}} \big(A_{i-t+1, 2}, \dots, A_{i-t+1,
                t-1}\big)
\]
for $t \leq i \leq k$.
\end{lemma}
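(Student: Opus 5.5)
The plan is a direct computation: use translation-invariance to rewrite each equation in terms of differences $z_c - z_1$, then solve for the single "diagonal" unknown of that equation. Since the system acts componentwise on $z_1,\dots,z_k \in \bZ^r$ (or $\bQ^r$), it suffices to argue coordinate by coordinate, so I treat the $z_i$ as scalars.

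Fix a row $j \in [m]$ and set $i = t-1+j$, so that $t \le i \le k$. Because $A$ is diagonalised, the $j$-th equation involves only $z_1,\dots,z_{t-1}$ and $z_i$:
\[
\sum_{c=1}^{t-1} A_{j,c} z_c + A_{j,i} z_i = 0 .
\]
Translation-invariance says the row sums to zero, that is,
\[
A_{j,1} = -\sum_{c=2}^{t-1} A_{j,c} - A_{j,i}.
\]
Substituting this into the equation gives
\[
\sum_{c=2}^{t-1} A_{j,c}(z_c - z_1) + A_{j,i}(z_i - z_1) = 0 .
\]
Since $A_{j,i} \neq 0$ by hypothesis, we can divide by it:
\[
z_i = z_1 + \sum_{c=2}^{t-1} \frac{A_{j,c}}{-A_{j,i}}\,(z_c - z_1).
\]
This is exactly \eqref{eq:parametric_representation} with
\[
p_i = \frac{1}{-A_{i-t+1,i}}\bigl(A_{i-t+1,2},\dots,A_{i-t+1,t-1}\bigr).
\]
Here the $c$-th coordinate of $p_i$ multiplies $z_c - z_1$; with the indexing of \eqref{eq:parametric_representation}, where the coordinate $p_i^{\,c-1}$ multiplies $z_c - z_1$, this is the stated vector.

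The remaining indices $1 \le i \le t-1$ are trivial identities. For $i=1$, take $p_1 = 0$, so that $z_1 = z_1$. For $2 \le i \le t-1$, take $p_i$ to be the unit vector selecting the term $z_i - z_1$, so that $z_i = z_1 + (z_i - z_1)$.

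For the converse I would note that the argument is reversible. Because the $j$-th equation involves only $z_1,\dots,z_{t-1}$ and $z_{t-1+j}$, any choice of $z_1,\dots,z_{t-1}$, with the remaining $z_i$ defined by the formula, satisfies every equation. So the parametrisation describes precisely the solution set.

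There is no real obstacle here. The only points needing care are the bookkeeping of indices (row $j$ corresponds to the variable $z_{t-1+j}$, and coordinate $c-1$ of $p_i$ pairs with $z_c - z_1$) and remembering that the nonvanishing of $A_{j,t-1+j}$ is what permits the division.
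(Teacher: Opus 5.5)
Your proof is correct and follows the paper's argument: you use the zero row sum from translation invariance to rewrite the $(i-t+1)$-th equation in terms of the differences $z_c - z_1$, then divide by the nonzero coefficient $A_{i-t+1,i}$. The converse direction you add is not needed, since the paper only uses that every solution satisfies \eqref{eq:parametric_representation}, but it is harmless.
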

\begin{proof}
        This corresponds to \cite[Claim 2.2]{2006.shapira}.
        The cases $i \leq t-1$ are immediate. For $i > t-1$,
        let $z^T = (z_1, \dots, z_k)$ with $z_j \in \bZ^r$ for some $r \geq 1$ be
        a solution of $A \cdot z = 0$, and let $c$ be the $(i-t+1)$-th row in $A$. Since $A$ is
        diagonalised, this implies that
        \[
                c_1 z_1 + \dots + c_{t-1} z_{t-1} + c_{i} z_{i} = 0.
        \]
        Isolating $z_{i}$ and using the fact that $-c_{i} = c_1 + \dots +
        c_{t-1}$ by translation invariance, we have that
        \[
                z_{i} = z_1 + \frac{c_2 (z_2 - z_1) +  \dots + c_{t-1}
                (z_{t-1}-z_1)}{-c_{i}},
        \]
        which implies the desired parametrisation.
\end{proof}
The previous lemma is only useful if most matrices are diagonalisable. To prove this,
we use the Schwartz-Zippel lemma in the following form (see \cite[Lemma
3.2]{2006.shapira}).
\begin{lemma}
        \label{lemma:zippel}
        Let $F$ be an arbitrary field, and let $f = f(x_1, \dots, x_b)$ be a
        non-zero polynomial in $F[x_1, \dots, x_b]$ where the degree in each
        variable is at most $r$. If $H$ is a subset of $F$ with $|H| > r$, then
        there are at most $|H|^b - (|H|-r)^b \leq br|H|^{b-1}$ assignments $x_1
        \in H, \dots, x_b \in H$ such that $f(x_1, \dots, x_b)=0$.
\end{lemma}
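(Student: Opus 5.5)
We prove the bound by induction on the number of variables $b$. The estimate $|H|^b-(|H|-r)^b \leq br|H|^{b-1}$ is elementary: write $|H|^b-(|H|-r)^b = r\sum_{j=0}^{b-1}|H|^{j}(|H|-r)^{b-1-j}$ and bound each of the $b$ summands by $|H|^{b-1}$. So the real content is the first inequality, that the zero set $Z(f)\cap H^b$ has size at most $|H|^b-(|H|-r)^b$.

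For $b=1$ this is the fact that a nonzero univariate polynomial of degree at most $r$ over a field has at most $r$ roots, and $|H|-(|H|-r)=r$.

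For the inductive step with $b\geq 2$, we expand $f$ in the last variable as $f=\sum_{j=0}^{r} g_j(x_1,\dots,x_{b-1})\,x_b^j$. Since $f\neq 0$, some $g_j$ is nonzero; we fix one such coefficient, say the leading one $g=g_{j_0}$. Every $g_j$ has degree at most $r$ in each of $x_1,\dots,x_{b-1}$. We then split assignments $(a,x_b)\in H^{b-1}\times H$ into two cases:
\begin{itemize}
\item If $g(a)=0$, we allow all $|H|$ values of $x_b$. By induction there are at most $N_{b-1}\coloneqq |H|^{b-1}-(|H|-r)^{b-1}$ such $a$.
\item If $g(a)\neq 0$, then $f(a,x_b)$ is a nonzero univariate polynomial of degree at most $r$ in $x_b$, so it vanishes for at most $r$ values of $x_b$. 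There are at most $|H|^{b-1}-N'$ such $a$, where $N'\leq N_{b-1}$ is the exact number of zeros of $g$.
\end{itemize}
Hence the number of zeros of $f$ is at most $N'|H| + (|H|^{b-1}-N')r = N'(|H|-r) + r|H|^{b-1}$. Because $|H|>r$, this expression is increasing in $N'$, so it is at most $N_{b-1}(|H|-r)+r|H|^{b-1}$. Expanding gives $|H|^{b}-(|H|-r)^{b}$, which closes the induction.

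The only delicate point is this monotonicity in $N'$: we need $|H|>r$ both there and in the base case. Everything else is routine, and since the statement is standard one could alternatively just cite \cite[Lemma 3.2]{2006.shapira}.
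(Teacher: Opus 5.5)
Your proof is correct. Splitting $a\in H^{b-1}$ according to whether the coefficient $g$ of the top power of $x_b$ vanishes gives at most $N'(|H|-r)+r|H|^{b-1}$ zeros. Since $|H|>r$, this is maximised at $N'=N_{b-1}$, where it equals exactly $|H|^b-(|H|-r)^b$, and your factorisation then gives the bound $br|H|^{b-1}$.

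The paper does not prove this lemma; it only quotes it from \cite[Lemma 3.2]{2006.shapira}. Your induction is the standard proof of that cited statement, so it makes the step self-contained rather than taking a different route.
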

\begin{claim}
        \label{claim:diagonalise}
        There exists $c=c(k)>0$ and $R=R(k)\in \bN$ such that all but $c/h$ of $(k, h)$-matrices $B \in \bZ^{m \times k}$,
        there exists a diagonalised matrix $A \in \bQ_{O_{h, k}(1)}^{m\times k}$ such that the
        respective associated systems have the same solutions.
        Furthermore, if we regard the coefficients of $B$ as unknowns,
        the coefficients of $A$ may be expressed as rational
        functions with degree at most $R$ in terms of those of $B$.
\end{claim}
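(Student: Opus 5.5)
The plan is to perform Gaussian elimination on $B$, using as pivots the last $m = k-t+1$ columns, and to control the bad event with the Schwartz--Zippel bound of Lemma \ref{lemma:zippel}. Write $B = (B_1 \mid B_2)$, where $B_1$ is the $m\times(t-1)$ block of the first $t-1$ columns and $B_2$ is the square $m\times m$ block of the last $m$ columns. If $\det B_2 \neq 0$, I would set $A = \operatorname{diag}(\det B_2,\dots,\det B_2)\, B_2^{-1} B$, that is, $A = (\operatorname{adj}(B_2) B_1 \mid \det(B_2) I_m)$.

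This choice has the properties we need. Left multiplication by an invertible matrix preserves the kernel, so $A$ and $B$ have the same solutions. This remains true over $\bZ^r$ componentwise, since the equations are linear over $\bQ$ and both systems define the same $\bQ$-subspace. The matrix $A$ is diagonalised, with diagonal entries $A_{i,t-1+i} = \det B_2 \neq 0$. Translation invariance is also preserved: $B\mathbf{1} = 0$ implies $A\mathbf{1} = 0$.

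The entries of $A$ are polynomials in the entries of $B$, since they come from adjugates and determinants. Each has total degree at most $m \le k$, so we may take $R = k$. Because all entries of $B$ lie in $[-h,h]$, the entries of $A$ are integers bounded by $m!\,h^m \cdot k = O_{h,k}(1)$. Alternatively, one can normalise by dividing each row by $\det B_2$, which gives rational functions of degree at most $R$ whose numerators and denominators are bounded. Either way, $A \in \bQ^{m\times k}_{O_{h,k}(1)}$.

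It remains to bound the fraction of $(k,h)$-matrices with $\det B_2 = 0$. Translation invariance means the entries are not independent: each row satisfies $\sum_j b_{ij} = 0$. I would parametrise each row by its last $k-1$ entries, which are free in $[-h,h]$, and set the first entry to $b_{i1} = -\sum_{j\ge 2} b_{ij}$. The first entry lies in column $1$, which belongs to $B_1$ since $t \ge 3$, so $B_2$ involves only free variables.

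Next I need that $\det B_2$ is a nonzero polynomial in these free variables, which is immediate by specialising $B_2$ to the identity. It has degree at most $1$ in each variable. I would then apply Lemma \ref{lemma:zippel} with $H = [-h,h]$, $|H| = 2h+1$, $b = m(k-1)$ and $r = 1$. This gives at most $m(k-1)(2h+1)^{b-1}$ zeros, out of $(2h+1)^b$ assignments, so the fraction is at most $mk/(2h+1) \le k^2/h$. Hence $c = k^2$ works.

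One subtlety is that the last-coordinate constraint means $b_{i1}$ may fall outside $[-h,h]$. So the uniform measure on $(k,h)$-matrices is the uniform measure on free coordinates restricted to those choices with $|b_{i1}| \le h$. This restricted set has density at least $c'(k)/h^{m}$ at worst, which would lose too much.

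The cleanest fix is to choose the dependent coordinate cleverly. I would condition on all entries outside $B_2$ and apply Schwartz--Zippel only to the $B_2$ entries. For that, I would take the dependent entry in each row to be one in $B_1$, say column $1$, and fix every other entry of $B_1$. Then the constraint is $b_{i1} = -(\text{sum of } B_1 \text{ rest}) - (\text{row sum of } B_2)$. The admissible $B_2$ row values form a slab rather than a product set, which is again not a grid.

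So I expect the main obstacle to be the interplay between the translation-invariance constraint and the product structure that Lemma \ref{lemma:zippel} requires. I would try first the standard count. Among $(k,h)$-equations, the number of choices of the free entries with $|b_{i1}| \le h$ is at least $(2h+1)^{k-1}/C_k$, where $C_k$ depends only on $k$ (a local limit or volume argument for sums of $k-1$ uniform variables). Within each row, I would then bound the zero set of $\det B_2$ by a univariate argument: fix all other entries and vary one entry of $B_2$ in row $i$ over an interval of length at least about $h/k$ compatible with the constraint. Along such an interval, $\det B_2$ is affine with a nonzero coefficient unless a cofactor vanishes, and an induction on $m$ handles that case. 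This gives a bad fraction of $O_k(1/h)$, as required.
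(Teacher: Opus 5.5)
Your overall route matches the paper's. You eliminate onto the last $m$ columns, bound the degenerate locus by Schwartz--Zippel over the $m(k-1)$ free coordinates, and compare with the total number of $(k,h)$-matrices. Your explicit choice $A=\operatorname{adj}(B_2)\,B$ is tidier than the paper's generic Gaussian elimination. The only bad event is the single polynomial $\det B_2$, of degree $1$ in each variable, and the entries of $A$ are integer polynomials of degree at most $m$. The pivots $A_{i,t-1+i}=\det B_2$ are also automatically non-zero, which the paper has to secure by a separate Schwartz--Zippel step in the proof of Lemma~\ref{lemma:parametric_representation}.

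The problem is your handling of the constraint $\sum_j b_{ij}=0$. Your worry that the admissible free assignments might have density only $c'(k)/h^{m}$ is unfounded. In each row, the proportion of $(b_{i2},\dots,b_{ik})\in[-h,h]^{k-1}$ with $|\sum_{j\ge 2}b_{ij}|\le h$ is bounded below by a constant depending only on $k$. For instance, taking all free entries in $[1,h/k]$ already gives $(h/k)^{k-1}$ admissible rows; this is exactly the paper's lower bound of $(h/k)^{m(k-1)}$ matrices. A loss of $1/h$ per row would only occur if the row sum were pinned to a single value.

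You even state the correct bound $(2h+1)^{k-1}/C_k$ at the end, but you do not combine it with your first computation. A $(k,h)$-matrix is determined by its free coordinates, so the bad $(k,h)$-matrices inject into the zero set of $\det B_2$ in $H^{m(k-1)}$, where $H=[-h,h]$. Your first computation bounds that zero set by $m(k-1)(2h+1)^{m(k-1)-1}$. Dividing by the at least $(2h+1)^{m(k-1)}/C_k^{m}$ matrices gives a bad fraction of $O_k(1/h)$ at once, and this is precisely how the paper concludes.

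The fibrewise argument you substitute is not just unnecessary; it fails as stated. Fix everything in row $i$ except one entry of $B_2$, and let $S$ be the sum of the other fixed free entries. That entry is then confined to $[-h,h]\cap[-h-S,h-S]$, an interval of length $2h-|S|$ that can shrink to a single point. So there is no uniform interval of length $\gtrsim h/k$, and the proportion of zeros on a fibre can be $1$. It can only be repaired by counting bad points globally (at most one per fibre with non-vanishing cofactor, plus the cofactor-vanishing case). That global count is just the direct argument above again.
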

This corresponds to \cite[Claim 3.1 and item (i) in Lemma
        3.3]{2006.shapira}, where it is proved in more detail.
\begin{proof}
        Consider the coefficients of $B$ as unknowns. Using Gaussian elimination,
        one can transform $B$ to a matrix $A$ in diagonalised form whose
        coefficients are rational functions of those of $B$ with degree at most
        $R = R(k)$. If one of the denominators of $A_{i, j}$ was identically
        zero, the Gaussian elimination process would not be well defined for any
        values of $B$. This is not the case, since it is well defined, for
        example, when $B$ is already diagonalised.

        We use the Schwartz-Zippel lemma to show that $A$ is well-defined for
        most $(k, h)$-matrices $B$. Indeed, let $f$ be the product of all the
        denominators in $A_{i, j}$, so it is a non-zero polynomial in the
        coefficients of $B$ where every variable has degree at most $mkR \leq
        k^2R$. Recall that the coefficients of a $(k, h)$-matrix lie all in
        $[-h, h]$.
        Therefore, applying Lemma 
        \ref{lemma:zippel} with $f$, $b = m(k-1)$ (notice that one coefficient
        of every row in a $(k, h)$-matrix may be written in terms of the
        others) and $H = [-h, h]$ implies that at most
        $mk^3R (2h+1)^{m(k-1)-1}$ $(k, h)$-matrices $B$ satisfy $f = 0$. Note
        that there are at least $(h/k)^{m(k-1)}$ $(k, h)$-matrices, as may be seen
        by setting arbitrarily all but the last column of $B$ to values in
        $[h/k]$, and defining the last column to guarantee that all rows add up
        to zero. These two estimates imply that $A$ is well-defined for
        all but $c(k)/h$ of $(k, h)$-matrices.
\end{proof}

We will build the parametric representation of a given matrix by first
diagonalising it and then taking the representation as in Lemma
\ref{lemma:parametric_diagonalised}, whenever this is well-defined. This gives a
non-degenerate representation under the hypotheses on $k$ and $t$ of Proposition
\ref{prop:rankin_random_system}.
\begin{lemma}
        \label{lemma:parametric_representation}
        There exists $c=c(k)>0$ such that all but $c(k)/h$ of $(k, h)$-matrices $B \in \bZ^{m \times k}$ admit a
        non-degenerate parametric representation.
\end{lemma}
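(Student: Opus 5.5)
We plan to follow Shapira's argument: express non-degeneracy as the non-vanishing of a polynomial in the entries of $B$, and then apply the Schwartz--Zippel lemma (Lemma \ref{lemma:zippel}) exactly as in the proof of Claim \ref{claim:diagonalise}. First, we discard the at most $c_0(k)/h$ fraction of $(k,h)$-matrices for which Claim \ref{claim:diagonalise} fails. For the remaining $B$, we take the diagonalised $A$ and the representation $P=(p_1,\dots,p_k)$ given by Lemma \ref{lemma:parametric_diagonalised}. By Claim \ref{claim:diagonalise}, each coordinate $p_i^j$ is a rational function of degree at most $R'=R'(k)$ in the entries of $B$. (For this we also need $A_{i,t-1+i}\neq 0$; we add these entries to the list of polynomials that must not vanish.)

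Next we reformulate non-degeneracy. Let $D=\binom{t-2+2^\ell}{2^\ell}$ be the number of monomials of total degree at most $2^\ell$ in $t-2$ variables. Form the $k\times D$ matrix $M(B)$ whose $i$-th row consists of all such monomials evaluated at $p_i$. A nonzero $Q$ vanishing on every $p_i$ is exactly a nonzero vector in $\ker M(B)$. So $P$ is non-degenerate if and only if $M(B)$ has rank $D$, which makes sense because $D\le k$ by hypothesis. Equivalently, some fixed $D\times D$ minor of $M(B)$ is nonzero. We fix the minor given by rows $1,\dots,D$. After clearing denominators, this determinant becomes a polynomial $g$ in the entries of $B$, with degree in each variable bounded by some $R''(k)$, since $t\le k$ and $2^\ell\le k$.

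The main obstacle is to show that $g$ is not identically zero. It suffices to exhibit one rational diagonalised translation-invariant matrix $A_0$ whose induced points $p_1,\dots,p_D$ are unisolvent for polynomials of degree at most $2^\ell$. Once we have such an $A_0$, integer multiples of its rows give an integer matrix $B_0$ on which $g\neq 0$.

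To build $A_0$, we use the fact that the points $p_i$ for $i\ge t$ are arbitrary: the row formula in Lemma \ref{lemma:parametric_diagonalised} lets us prescribe $p_i$ as any rational vector by choosing the first $t-1$ entries of the row and then fixing the diagonal entry by translation invariance. We also need $A_{i,t-1+i}\neq 0$, which is arranged by choosing the entries so that their sum is nonzero. The points $p_1,\dots,p_{t-1}$ are fixed to be $0$ and the unit vectors. Generic points are unisolvent: the determinant, viewed as a polynomial in the free coordinates, is nonzero. One way to see this is to take the remaining points along a moment-curve-type configuration, or simply to note that for a generic choice no nonzero polynomial of degree at most $2^\ell$ vanishes on $D$ points, because each additional generic point cuts the vanishing space by one dimension. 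The fixed points $0$ and $e_j$ impose independent conditions, namely that the constant and the linear coefficients vanish, so the extension to a full unisolvent set is possible. Hence $g\not\equiv 0$.

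Finally, we apply Lemma \ref{lemma:zippel} to $g$ with $H=[-h,h]$ and $b=m(k-1)$ free variables. It gives at most $bR''(2h+1)^{b-1}$ zeros. Comparing with the at least $(h/k)^{m(k-1)}$ $(k,h)$-matrices shows that the degenerate ones form at most a $c_1(k)/h$ fraction. Combining this with the exceptional set from Claim \ref{claim:diagonalise} gives the lemma with $c=c_0+c_1$.
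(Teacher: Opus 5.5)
Your proposal is correct and follows essentially the same route as the paper: discard the matrices where the diagonalisation or the pivots $A_{i-t+1,i}$ fail, form the square monomial-evaluation matrix on $p_1,\dots,p_D$ with $D=\binom{t-2+2^\ell}{2^\ell}\le k$, show its determinant is not identically zero by specialising to a diagonalised matrix in which $p_t,\dots,p_D$ can be prescribed freely, and finish with Schwartz--Zippel. The only real difference is that you certify non-vanishing by a greedy dimension count (which is valid), where the paper uses a Leibniz-expansion monomial argument; two small slips are worth fixing: $Q(e_j)=0$ does not force the linear coefficients to vanish (it only forces the coefficients of $x_j,x_j^2,\dots,x_j^{2^\ell}$ to sum to minus the constant term), although the $t-1$ fixed conditions are still independent, and the literal moment curve $(s,s^2,\dots,s^{t-2})$ does not work in general (for $t=4$, $2^\ell=4$ the $6$-dimensional space of multiples $(x_2-x_1^2)R$ with $\deg R\le 2$ vanishes on it, and only $e_1,e_2$ lie off it), so rely on the dimension-count argument alone.
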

\begin{proof}
        Again, regard the coefficients of $B$ as unknowns, and write $A = A(B)$ for the
        diagonalised matrix obtained in Claim \ref{claim:diagonalise}, where
        coefficients are rational functions of the coefficients of $B$ and
        degree bounded by $R(k)$, and are well-defined for all but $c_1(k)/h$ of
        $(k, h)$-matrices $B$. We define the parametric representation to be the
        $p_i$ as defined in Lemma \ref{lemma:parametric_diagonalised} for $A$.
        Notice these are well defined if both $A$ is well defined and $A_{i-t+1,
        i}$ does not evaluate to $0$ for $t \leq i \leq k$. The coefficient
        $A_{i-t+1, i}$ cannot be identically zero as a function of those of $B$,
        as one may see by evaluating $B$ at a matrix that is already
        diagonalised and has non-zero entries. Since $A_{i-t+1, i}$ is a
        rational function of degree at most $R(k)$, applying Lemma
        \ref{lemma:zippel} implies that less than $c_2(k) h^{m(k-1)-1}$ of the
        $(k, h)$-matrices $B$ satisfy $A_{i-t+1, i} = 0$ for some $t \leq i \leq
        k$.

        Therefore, we have seen that the parametric representation $p_1 = p_1(B), \dots,
        p_k = p_k(B)$ is well defined for all but $(c_1+c_2)(k)/h$ of $(k,h)$-matrices
        $B$. Finally, let us see that it is a non-degenerate representation for
        almost all matrices. Write $m' = \binom{t-2+2^{\ell}}{2^{\ell}} -
        t + 1 \leq m$ and $k'=t-1+m'$. We will show that the tuples of the parametric representation
        corresponding to the first $m'$ rows of the diagonalised matrix $A$,
        that is, the tuples $p_1, \dots, p_{k'} \in \bQ^{t-2}$, cannot all be
        zeros of a given non-null $(t-2)$-variate polynomial $Q \in \bR[x_1,
        \dots, x_{t-2}]$, for all but a negligible amount of possible values of
        $B$. Indeed, let $\cI = \{(\alpha_1, \dots, \alpha_{t-2}) \in \bZ_{\geq
                0}^{t-2}
                \colon \alpha_1 + \dots + \alpha_{t-2} \leq
        2^{\ell}\}$ be the family of multi-indices of total degree at
        most $2^{\ell}$, so that $Q$ may be written as
        \[
                Q(x) = \sum_{\alpha \in \cI} c_{\alpha} x^{\alpha},
        \]
        where $x = (x_1, \dots, x_{t-2})$ and $x^{\alpha} = x_1^{\alpha_1} \dots
        x_{t-2}^{\alpha_{t-2}}$. Consider the system of equations that
        imposes $Q(p_1) =  \dots = Q(p_{k'}) = 0$. More concretely,
        define the $|\cI| \times |\cI|$ matrix $M = M(B)$ with columns
        indexed by $\cI$ such that $M_{i, \alpha} = p_i^{\alpha}$ for $1 \leq i
        \leq t-1+m' = |\cI| = \binom{t-2+2^\ell}{2^{\ell}}$ and $\alpha \in \cI$. Note that $Q$ is zero
        at $p_1, \dots, p_{k'}$ if and only if $M_{i, \alpha} c = 0$, where $c$
        is the vector of coefficients of $Q$ indexed by $\cI$. If we show that
        $M$ is invertible, then the only solution is $c = 0$, in other words,
        the only polynomial with zeros in $p_1, \dots, p_{k'}$ is identically zero.
        Hence, the statement of the lemma is reduced to the following claim.
        \begin{claim*}
                For all but $c_3(k)/h$ of $(k, h)$-matrices $B \in \bZ^{m\times
                k}$, the matrix $M$ has non-zero determinant.
        \end{claim*}
        By Lemma \ref{lemma:parametric_diagonalised}, we know that $p_i^j$ is a
        rational function of degree one on the coefficients of $A$. Therefore,
        every entry of $M$ may be regarded as a rational function of degree at
        most $2^{\ell}$ in terms of the coefficients of $A$. Since there are
        at most $k$ rows in $M$, its determinant in terms of the coefficients of
        $A$ is a rational function where every variable has degree $O_{k}(1)$, and by Claim \ref{claim:diagonalise}, the same is true when
        regarded as a function of the coefficients of $B$.

        Let us see the determinant of $M$ is not identically zero in terms of
        $B$. To do so, assume $B$ is already diagonalised, so $A = B$, and that
        $A_{i, t-1+i} = -1$ for $t \leq i \leq k$, so that $p_i = (A_{i-t+1,2}
        \dots, A_{i-t+1, t-1})$ may be arbitrarily set for $i > t-1$.
        We order the columns of $M$ in such a way that the index $(0, \dots, 0)$
        corresponding to constant terms in the polynomial is at the first
        position. In this case, the first row of $M$ is the vector $(1, 0,
        \dots, 0)$. This shows that we may ignore the first row and first column
        of $M$ to compute its determinant. As for the rest, since $p_i$ is the
        $i$-th unit vector for $2 \leq i \leq t-1$, there exist a set of columns
        $j_2, \dots, j_{t-1}$, such that the minor corresponding to rows $2,
        \dots, t-1$ and columns $j_2, \dots, j_{t-1}$ form a permutation matrix.
        If we expand the determinant of $M$ using the Leibniz formula for
        determinants, we obtain a sum of monomials depending on coefficients of
        $A$. Since every possible combination of $A_{i-t+1, j}$ in rows $2 \leq
        i$ determines the column uniquely, monomials involving columns $j_2,
        \dots, j_{t-1}$ at rows $2, \dots, t-1$ have coefficient $\pm 1$ and are
        not canceled by other monomials, so the determinant is not identically
        zero and has degree $O_k(1)$. Finally, an application of Lemma
        \ref{lemma:zippel} proves the claim and hence finishes our proof.
\end{proof}

As we mentioned at the beginning of the section, if a system admits a
non-degenerate parametric representation, we can then use the arguments of
Rankin to produce a large $A$-free set. We only sketch the proof, since it
follows the previous section closely. It is convenient to extend the definition
of polynomial copies of $P$ to the setting of $\bZ^s$. More concretely, given $P
= (p_1, \dots, p_k)$ as in \eqref{eq:parametric_representation}, we define a
\emph{polynomial copy of degree $r$ of $P$} in $\bZ^s$ to be a set
\[
        \left\{Q(p_i^1, \dots, p_i^{t-2}) \colon 1 \leq i \leq k
        \right\} \subset \bZ^s
\]
for $Q \in \bR[x_1, \dots, x_{t-2}]^s$ an $s$-tuple of $(t-2)$-variate
polynomials of total degree at most $r$. Sets that are $(P, r)$-free in $\bZ^s$
may be related to those in $\bZ$ by the following lemma.
\begin{lemma}
        \label{lemma:projection_behrend_parametric}
        Given a parametric representation $P \subset (\bQ^{t-2})^k$ as in
        \eqref{eq:parametric_representation} and an integer $0 < r$,
        there exists an integer $K > 0$  such that the largest size of a
        $(P,r)$-free set in $[(KM)^s]$ is at least the one of any $(P,
        r)$-free set in $[M]^s$.
\end{lemma}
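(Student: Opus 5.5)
The plan is to mirror the proof of Lemma \ref{lemma:projection_behrend}: express ``being a polynomial copy of $P$ of degree $r$'' as the solution set of a translation-invariant integer linear system, and then apply Corollary \ref{cor:freiman_product} with $d=1$ to flatten $[M]^s$ into $[K^{s-1}M^s]\subset[(KM)^s]$.

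\textbf{Linear description in $\bZ$.} Let $\cI$ be the set of multi-indices $\alpha\in\bZ_{\geq 0}^{t-2}$ with $|\alpha|\leq r$, and let $V$ be the $k\times|\cI|$ matrix with $V_{i,\alpha}=p_i^\alpha$. A tuple $(y_1,\dots,y_k)\in\bR^k$ is (the ordered list of values of) a polynomial copy of $P$ of degree $r$ exactly when $y\in\Ima V$, with $y=Vc$ and $c$ the coefficient vector of $Q$. Since the entries of $V$ are rational, $\ker V^T$ has a rational basis $v_1,\dots,v_{k-\operatorname{rank}V}$. Clearing denominators by a common $L\in\bN$, let $A$ be the integer matrix with rows $Lv_j$. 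Then $y\in\Ima V$ if and only if $A\cdot y=0$, because $\Ima V=(\ker V^T)^\perp$ over $\bR$. The column of $V$ indexed by $\alpha=0$ is $(1,\dots,1)^T$, so $A\cdot(1,\dots,1)^T=0$ and $A$ is translation-invariant.

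\textbf{Extension to $\bZ^s$ and the trivial-solution issue.} For tuples in $\bZ^s$ the copy condition holds componentwise, so it is equivalent to $A$ applied to each coordinate, i.e.\ to the $d=s$ system with block matrix $A\otimes I_s$. We only need $d=1$ equations applied coordinatewise, which Lemma \ref{lemma:freiman_afree} handles for $\phi^{\times d}$.

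The main subtlety is matching ``trivial''. A solution of $A\cdot y=0$ with all $y_i$ equal corresponds to a constant $Q$, which is harmless. However, there might also be non-constant $Q$ giving a copy with all $Q(p_i)$ equal, or solutions with some coordinates repeated. The right move is to compare the two notions at the level of solution tuples, not sets. A set $S$ is $(P,r)$-free if and only if every solution $y\in S^k$ of the coordinatewise system $A\cdot y=0$ has $y_1=\dots=y_k$. This matches Lemma \ref{lemma:freiman_afree}'s notion of $A$-free, provided we interpret ``trivial copy'' as all values coinciding.

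If the definition of $(P,r)$-free is instead literally about $Q$ being constant, I would still only need the statement ``$\phi^{\times s}$ maps solution tuples of $A$ bijectively to solution tuples of $A$''. Lemma \ref{lemma:freiman_afree} gives exactly this equivalence for each $(x_1,\dots,x_k)$. The property of being $(P,r)$-free, with whichever triviality notion is intended, therefore transfers, since the preimage tuple is a copy of $P$ if and only if the image tuple is.

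\textbf{Conclusion.} Take $K=s_0$, the constant $s$ of Corollary \ref{cor:freiman_product} for this $A$ with $d=1$, applied with $m_1=\dots=m_s=M$. This gives an injective Freiman isomorphism from $[M]^s$ into $[K^{s-1}M^s]\subseteq[(KM)^s]$ preserving $(P,r)$-freeness and cardinality, which proves the lemma.

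I expect the only real obstacle to be the triviality bookkeeping above. The rest is routine linear algebra together with the already proved Freiman machinery.
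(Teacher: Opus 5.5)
Your proposal is correct and follows the route the paper intends; the paper omits this proof and points to Lemma \ref{lemma:projection_behrend}. As there, you encode degree-$r$ polynomial copies of $P$ as the kernel of a translation-invariant integer matrix built from a rational basis of $\ker V^T$, then flatten $[M]^s$ into $[(KM)^s]$ via Corollary \ref{cor:freiman_product} with $d=1$, where $K$ depends only on $A$ and hence not on $M$ or $s$. Your tuple-level treatment of triviality is a worthwhile addition: unlike the Vandermonde case, $V$ need not be injective here, so ``$Q$ constant'' and ``all values equal'' can differ when $P$ is degenerate in degree $r$.
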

\begin{proof}
        We omit the proof, since it is very similar to the one of Lemma
        \ref{lemma:projection_behrend}.
\end{proof}

\begin{lemma}
        \label{lemma:inductive_ranking_parametric}
        Let $A \in \bZ^{m \times k}$ admit a non-degenerate parametric
        representation $P = (p_1, \dots, p_k)$ of its associated system, and let
        $0 \leq \ell' \leq \ell-1$ be an integer.
        There exists a $(P, 2^{\ell'})$-free set $A \subset [n]$ of size
        \[
                |A| \geq  n\exp\left(-C\log(n)^{1/(\ell+1-\ell')}\right),
        \]
        where $C > 0$ is a constant depending on $A$.
\end{lemma}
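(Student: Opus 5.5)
I will mirror the proof of Lemma \ref{lemma:inductive_rankin}, arguing by downward induction on $\ell'$ from $\ell'=\ell-1$ to $\ell'=0$. The only new ingredient is that $X$ is replaced by the finite point set $\{p_1,\dots,p_k\}\subset\bQ^{t-2}$. The one-variable fact ``a nonzero polynomial of degree $\le 2^\ell$ has at most $2^\ell$ roots'' is replaced by non-degeneracy: a $(t-2)$-variate polynomial of total degree at most $2^\ell$ vanishing at all $p_i$ is identically zero. Projections from $[M]^s$ to $[n]$ will be handled by Lemma \ref{lemma:projection_behrend_parametric}.

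\emph{Base case $\ell'=\ell-1$.} Let $B_R=\{q\in[M]^s: q_1^2+\dots+q_s^2=R\}$ and choose $R\in[sM^2]$ with $|B_R|\ge M^s/(sM^2)$ by averaging. Suppose $Q=(Q_1,\dots,Q_s)$ has components of total degree at most $2^{\ell-1}$ and $Q(p_i)\in B_R$ for all $i$. Then $\sum_j Q_j^2-R$ has total degree at most $2^\ell$ and vanishes on every $p_i$, so by non-degeneracy $\sum_j Q_j^2\equiv R$ as a polynomial. Comparing top-degree homogeneous parts, a sum of squares of real forms of degree $D\ge1$ cannot vanish identically unless each form is zero. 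Hence every $Q_j$ is constant, so $B_R$ is $(P,2^{\ell-1})$-free. Applying Lemma \ref{lemma:projection_behrend_parametric} with $M=n^{1/s}/K$ and $s=\sqrt{\log n}$ gives the bound with exponent $1/2=1/(\ell+1-\ell')$.

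\emph{Inductive step.} Take a $(P,2^{\ell'+1})$-free set $R\subset[sM^2]$ from the induction hypothesis. Translation invariance holds because constants are polynomials. Define $B_z=\{q\in[2M]^s: \sum q_j^2\in z+R\}$ and choose $z\le 4sM^2$ by averaging over $[M,2M]^s$, giving $|B_z|\ge |R|M^s/(4sM^2)$. Now take $Q$ of degree at most $2^{\ell'}$ with $Q(p_i)\in B_z$. Then $F=\sum Q_j^2$ has degree at most $2^{\ell'+1}$, and $\{F(p_i)\}$ is a polynomial copy of $P$ inside $z+R$. Therefore $F$ is constant on the $p_i$. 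Since $\ell'+1\le\ell$, the polynomial $F-F(p_1)$ has degree at most $2^\ell$ and vanishes at all $p_i$, so non-degeneracy forces it to be zero. As before, this makes $Q$ constant. Projecting via Lemma \ref{lemma:projection_behrend_parametric} with $s=\log(n)^{1/(\ell+1-\ell')}$ and $M=n^{1/s}/K$ gives the stated size, exactly as in the computation of Lemma \ref{lemma:inductive_rankin}.

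The main subtlety is a definitional gap. ``Trivial'' for $(P,r)$-freeness means $Q$ is a constant polynomial, but what the inductive argument needs is that a copy with all values $Q(p_i)$ equal is trivial. These are not the same: a non-constant $Q$ of low degree could take equal values on all $p_i$. Non-degeneracy (applied to $Q_j-Q_j(p_1)$, of degree at most $2^\ell$) shows this cannot happen. With that identification in place, both the base case and the induction go through verbatim. In the step ``sum of squares constant implies each $Q_j$ constant'' I would argue via leading homogeneous components over $\bR$.

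Finally, I note for later use that a $(P,1)$-free set is $A$-free. This holds because every solution of $A$ has the affine form \eqref{eq:parametric_representation}, which is a degree-one polynomial copy of $P$, and it is non-constant exactly when the solution is non-trivial.
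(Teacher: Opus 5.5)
Your proposal is correct and follows essentially the same route as the paper. It uses the same sphere-plus-averaging base case, the same Rankin-type step with $B_z=\{q\in[2M]^s:\sum_j q_j^2\in z+R\}$, non-degeneracy together with the sum-of-squares argument to force every $Q_j$ to be constant, and projection via Lemma~\ref{lemma:projection_behrend_parametric}. Your remark that non-degeneracy is what identifies ``$Q$ constant'' with ``all $Q(p_i)$ equal'' is a valid clarification that the paper leaves implicit, but it belongs in the (omitted) proof of that projection lemma, where $(P,r)$-freeness is rewritten as freeness from non-trivial solutions of a linear system; in the inductive step itself, the literal definition already makes $\sum_j Q_j^2$ constant as a polynomial.
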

\begin{proof}
        The proof is essentially the same as the one of Lemma
        \ref{lemma:inductive_rankin}, with the caveat that we are now concerned
        with copies of $P \subset (\bQ^{t-2})^{k}$ instead of copies of $X \subset \bN$.

        We induct on $\ell'$ from $\ell' = \ell-1$ to $\ell'=0$. 
        Let $M$ and $s$ be integer parameters to be optimised later,
        and consider
        \[
                B_r = \{(b_1, \dots, b_s) \in [M]^s \colon b_1^2 + \dots + b_s^2
                = r\}
        \]
        for integer $r > 0$. We claim that $B_r$ is $(P, 2^{\ell-1})$-free.
        Indeed, suppose there exist $Q_1, \dots, Q_s \in \bR[x_1, \dots,
        x_{t-2}]$ which are  $(t-2)$-variate polynomials of total degree at most
        $2^{\ell-1}$ such that $(Q_1(p_i), \dots,
        Q_s(p_i)) \in B_r$ for all $1 \leq i \leq k$, where $Q_j(p_i)$ is
        shorthand for $Q_j(p_i^1, \dots, p_i^{t-2})$. Then
        \[
                Q(p_i^1, \dots, p_i^{t-2}) \coloneq \sum_{1 \leq j \leq s}
                Q_j(p_i^1, \dots, p_i^{t-2})^2 = r,
        \]
        for all $1 \leq i \leq k$, so $Q - r$ is a $(t-2)$-variate polynomial
        of total degree  at most$2^\ell$
        that is zero on all $p_i$. Since $P$ is non-degenerate, $Q=r^2$ is a
        constant. Note that $Q$ is a sum of squares, which implies that $Q_j$
        is a constant for all $1 \leq j \leq s$, so it is in fact a trivial copy. By an
        averaging argument as in Lemma \ref{lemma:inductive_rankin}, there
        exists an $r_0$ such that $|B_{r_0}| \geq \frac{M^s}{sM^2}$. Applying Lemma
        \ref{lemma:projection_behrend_parametric}, there exists a constant $K$ such that
        $[(KM)^s]$ contains a $(P, 2^{\ell-1})$-free set of size $|B_{r_0}|$, and setting $M=n^{1/s}/K$ and
        $s = \sqrt{\log(n)}$ proves the base case.

        For the inductive step, let $M$ and $s$ be again integers to be
        optimised, and take a $(P, 2^{\ell'+1})$-free set $R \subset [M^2s]$ of
        size $|R| \geq M^2s\exp\left(-C_1\log(M^2s)^{1/(\ell-\ell')}\right)$ for
        a constant $C_1 > 0$. Reasoning as in Lemma
        \ref{lemma:inductive_rankin}, there exists an integer $0 \leq z \leq
        4sM^2$ such that the set
        \[
                B = \left\{ (q_1, \dots, q_s) \in [2M]^s \colon q_1^2 + \dots + q_s^2 \in
                z+R\right\},
        \]
        has size at least $|B| \geq \frac{|R|M^s}{4sM^2}$. The set $B$ is
        $(P, 2^{\ell'})$-free. Indeed, suppose there are polynomials $Q_1,
        \dots, Q_s \in \bR[x_1, \dots, x_{t-2}]$ of degree at most $2^{\ell'}$ such that
        $(Q_1(p_i), \dots, Q_s(p_i)) \in B$ for all $1 \leq i \leq k$. Then
        $(Q_1^2 + \dots + Q_s^2)(p_i)$ for $1 \leq i \leq k$ forms a polynomial
        copy of degree $2^{\ell'+1}$ in $z+R$, which by hypothesis must be trivial.
        This again implies that $Q_1, \dots, Q_s$ are all constant and the copy of $P$ must
        be trivial. Optimizing the parameters as in Lemma
        \ref{lemma:inductive_rankin} gives the result.
\end{proof}

Proposition \ref{prop:rankin_random_system} follows from
putting together Lemma \ref{lemma:parametric_representation}, Lemma
\ref{lemma:inductive_ranking_parametric}, and the observation that if $A$ admits
a non-degenerate parametric representation $P$ then a $(P, 1)$-free set is an
$A$-free set on account of \eqref{eq:parametric_representation}.
\end{document}